\theoremstyle{plain}
\newtheorem{theo}{Theorem}[section]
\newtheorem{lemm}[theo]{Lemma}
\newtheorem{prop}[theo]{Proposition}
\newtheorem{coro}[theo]{Corollary}
\theoremstyle{definition}
\newtheorem{defi}[theo]{Definition}
\newtheorem{rema}[theo]{Remark}
\newtheorem{exam}[theo]{Example}
\newenvironment{proof1}{\medskip\par\noindent{\bf Proof.}}{\hfill $\Box$
\medskip\par}
\def\C{\mathbb{C}}
\def\N{\mathbb{N}}
\def\R{\mathbb{R}}
\def\Z{\mathbb{Z}}
\def\a{\alpha}
\def\b{\beta}
\def\ro{\rho}
\def\ga{\gamma}
\def\o{\omega}
\def\bmu{\boldsymbol{\mu}}
\def\bm{\boldsymbol{m}}
\def\bM{\mathbb{M}}
\def\M{\mathbb{M}}
\def\parn{\par\noindent}
\begin{document}

\title{Summability in general Carleman ultraholomorphic classes}
\author{Alberto Lastra, St\'ephane Malek and Javier Sanz}
\date{\today}

\maketitle

{ \small
\begin{center}
{\bf Abstract}
\end{center}
A definition of summability is put forward in the framework of general Carleman ultraholomorphic classes in sectors, so generalizing $k-$summability theory as developed by J.-P. Ramis. Departing from a strongly regular sequence of positive numbers, we construct an associated analytic proximate order and corresponding kernels, which allow us to consider suitable Laplace and Borel-type transforms, both formal and analytic, whose behavior closely resembles that of the classical ones in the Gevrey case. An application to the study of the summability properties of the formal solutions to some moment-partial differential equations is included.

\medskip
\noindent Key words: Laplace and Borel transforms, formal power series, asymptotic
expansions, ultraholomorphic classes, summability, moment-partial differential equations.\par
\noindent 2010 MSC: Primary 40C10; Secondary: 30D60, 34M30, 35C15, 35C20.
}

\bigskip

\section{Introduction}

\indent\indent
The aim of this paper is to put forward a concept of summability of formal (i.e. divergent in general) power series with controlled growth in their coefficients in the framework of general Carleman ultraholomorphic classes in sectors, so generalizing the by-now classical and powerful tool of $k$-summability of formal Gevrey power series, introduced by J.-P. Ramis~\cite{Ramis1,Ramis2}.

Given a sequence of positive real numbers  $\M=(M_n)_{n\in\N_0}$, the Carleman ultraholomorphic class $\tilde{\mathcal{A}}_{\M}(G)$ in a sectorial region $G$ of the Riemann surface of the logarithm consists of those holomorphic functions $f$ in $G$ whose derivatives of order $n\ge 0$ are bounded on every bounded proper subsector of $G$ by, essentially, the values $n!M_n$. Equivalently (see Subsection~\ref{subsectCarlemanclasses}), these functions admit a (non-uniform) $\M$-asymptotic expansion at 0 in $G$, given by a formal power series $\hat f=\sum_{n\ge 0}a_nz^n/n!$ whose coefficients are again suitably bounded in terms of $\M$ (we write $f\sim_{\M}\hat f$ and $(a_n)_{n\in\N_0}\in\Lambda_{\M}$; $f$ is said to be flat if $\tilde{\mathcal{B}}(f)$ is the null sequence). The map sending $f$ to $(a_n)_{n\in\N_0}$ is the asymptotic Borel map $\tilde{\mathcal{B}}$, and its injectivity in $\tilde{\mathcal{A}}_{\M}(G)$ means that every function $f$ in such a class is determined by its asymptotic expansion $\hat f$ (in other words, the class does not contain nontrivial flat functions). In this case, the class $\tilde{\mathcal{A}}_{\M}(G)$ is said to be quasianalytic, and it makes sense to call $f$ the sum of $\hat f$; this is the idea behind summability methods in this context.

We will only consider strongly regular sequences $\M$ as defined by V. Thilliez~\cite{thilliez}, which are subject to standard conditions guaranteeing good properties for the considered classes (see Subsection~\ref{subsectstrregseq}). The best known example is that of Gevrey classes, corresponding to $\bM_{\a}=(n!^{\a})_{n\in\N_{0}}$, $\a>0$, for which we use the notations $\tilde{\mathcal{A}}_{\a}(G)$,  $\Lambda_{\a}$, $f\sim_{\a}\hat{f}$ and so on, for simplicity.
Let us denote by $S_\ga$ the sector bisected by the direction $d=0$ and with opening $\pi\ga$.
It is well known that $\tilde{\mathcal{B}}:\tilde{\mathcal{A}}_{\a}(S_\ga)\to\Lambda_{\a}$ is injective if, and only if, $\ga>\a$ (Watson's Lemma, see for example~\cite[Prop.\ 11]{balserutx}).
This result is the departure point for the definition of $1/\a-$summability in a direction, introduced by J.-P. Ramis~\cite{Ramis1,Ramis2}. A paradigmatic example of flat function in $\tilde{\mathcal{A}}_{\a}(S_{\a})$ is $f_{\a}(z)=\exp(-z^{-1/\a})$, and it gives rise to kernels in terms of which one may define formal and analytic Laplace and Borel transforms permitting the reconstruction of the sum of a given Gevrey formal power series belonging to $\tilde{\mathcal{B}}(\tilde{\mathcal{A}}_{\a}(S_{\ga}))$ for some $\ga>\a$.
The technique of multisummability (in a sense, an iteration of a finite number of $1/\a-$summability procedures) has been proven to apply successfully to a plethora of situations concerning the study of formal power series solutions at a singular point for linear and nonlinear (systems of) meromorphic ordinary differential equations in the complex domain (see, to cite but a few, the works~\cite{Balser,balserutx,BalserBraaksmaRamisSibuya,Braaksma,MartinetRamis,RamisSibuya}), for partial differential equations (for example, \cite{Balsermulti,BalserMiyake,Hibino,Malek3,Ouchi}), as well as for singular perturbation problems (see~\cite{BalserMozo,CanalisMozoSchafke,lastramaleksanz2}, among others).

However, it is known that non-Gevrey formal power series solutions may appear for different kinds of equations. For example, V. Thilliez has proven some results on solutions within these general classes for algebraic equations in~\cite{Thilliez2}. Also, G. K. Immink in~\cite{Immink,Immink2} has obtained some results on summability for solutions of difference equations whose coefficients grow at an intermediate rate between  Gevrey classes, called of $1^+$ level, that is governed by a strongly regular sequence. Very recently, the second author~\cite{Malek4} has studied some singularly perturbed small step size difference-differential nonlinear equations whose formal solutions with respect to the perturbation parameter can be decomposed as sums of two formal series, one with Gevrey order 1, the other of $1^+$ level, a phenomenon already observed for difference equations~\cite{BraaksmaFaberImmink}.

All these results invite one to try to extend summability tools so that they are able to deal with formal power series whose coefficients' growth is controlled by a general strongly regular sequence, so including Gevrey, $1^+$ level and other interesting examples. Our approach will be inspired by the study of moment summability methods, equivalent in a sense to $1/\a-$summability, developed by W.~Balser in~\cite[Section\ 5.5]{balserutx} and which relies on the determination of a pair of kernel functions with suitable asymptotic and growth properties, in terms of which to define formal and analytic Laplace- and Borel-like transforms.
These summability methods have already found its application to the analysis of formal power series solutions of different classes of partial differential equations (for example, by the second author~\cite{Malek1,Malek2} and by S. Michalik~\cite{Michalik1,Michalik2}), and also for so-called moment-partial differential equations, introduced by W. Balser and Y. Yoshino~\cite{BalserYoshino} and subsequently studied by S. Michalik~\cite{Michalik3,Michalik4}.

It seems clear that an analogue of Watson's lemma should be obtained for a proper definition of summability, and flat functions in sectors of optimal opening (like $f_{\a}$, see above) should be determined. We recall that V. Thilliez~\cite{thilliez} introduced a growth index $\ga(\M)\in(0,\infty)$ for every strongly regular sequence $\M$ (which for $\M_{\a}$ equals $\a$), and proved the following facts: if $\ga<\ga(\M)$, then $\mathcal{A}_{\M}(S_{\ga})$ contains nontrivial flat functions; and, resting on Whitney-type extension results, he obtained a generalized Borel-Ritt-Gevrey theorem
(coming with right linear and continuous inverses for the Borel map). By means of these flat functions, the authors~\cite{lastramaleksanz} defined suitable kernels and moment sequences and reproved this last result by the classical truncated Laplace transform technique, in the same vein as Ramis' original proof. In the last section of~\cite{lastramaleksanz}, and resting on some partial Watson-like results obtained by the first and third authors~\cite{lastrasanz1}, some hints are given on how to define a Laplace transform which behaves properly with respect to $\M-$asymptotics, and a definition of $\M-$summability is suggested. However, the preceding results for general classes are not fully satisfactory, as the quantity $\ga(\M)$ is not known to be optimal for quasianalyticity and, moreover, flat functions are only obtained after restricting ourselves to sectors $S_\ga$ with $\ga<\ga(\M)$.

These drawbacks have been recently overcome in the paper~\cite{sanz13} by the third author.
There, for every strongly regular sequence $\M$ a new constant $\omega(\M)$ is introduced, measuring the rate of growth of the sequence $\M$, in terms of which quasianalyticity may be properly characterized due to a classical result of B. I. Korenbljum~\cite{korenbljum}. This constant is easily computed (see~(\ref{equaordequasM}) and (\ref{equaordeMdet})), and indeed it is the inverse of the order of growth of the classical function $M(t)$ associated with $\M$, namely
$M(t)=\sup_{n\in\N_0}\log(t^n/M_n)$, $t>0$. So, a definition of $\M-$summability in a direction $d$ may be easily put forward, see Definition~\ref{defisumable}.
Regarding the construction of flat functions, V. Thilliez \cite{Thilliez2} had characterized flatness in $\tilde{\mathcal{A}}_{\M}(S_{\ga})$ in terms of non-uniform estimates governed by the function $e^{-M(1/|z|)}$, much in the same way as the function $e^{-|z|^{-1/\a}}$ expresses flatness in the Gevrey case.
The theory of proximate orders, which refines the notion of constant exponential order, allows one to specify the rate of growth of an entire function in terms of the function $M(t)$, and results by V. Bernstein, M. M. Dzhrbashyan, M. A. Evgrafov, A. A. Gol'dberg, I. V. Ostrovskii (see \cite{Levin,GoldbergOstrowskii}) and, in our regards, mainly L. S. Maergoiz~\cite{Maergoiz}, have been the key for the construction of flat functions in $S_{\omega(\M)}$, whenever $\M$ induces a proximate order (which is the case in all the instances of strongly regular sequences appearing in the literature). Section~\ref{sectkernelsfromproxorder} is devoted to these results. We mention that proximate orders had already been used in the study of some questions regarding multisummability, see~\cite{BalserBraun}.

Now that we have assured its existence under fairly mild assumptions, we devote Section~\ref{sectMsummability} to the introduction of kernels of $\M-$summability (see Definition~\ref{defikernelMsumm}), and the associated formal and analytic transforms, in terms of which to reconstruct the sums of $\M-$summable formal power series in a direction, as stated in Theorem~\ref{teorsumableequivTsumable}. Once our tool has been designed, it is necessary to test its applicability to the study of formal solutions of different types of algebraic and differential equations in the complex domain. Our first attempt is contained in the last section of the paper.
The notion of formal moment-differential operator was firstly introduced by W. Balser and M. Yoshino in~\cite{BalserYoshino}. Generally speaking, given the sequence of moments $\mathfrak{m}_e=(m_e(p))_{p\in\N_0}$ of a kernel function $e$ of order $k>0$ (in other words and according to Remark \ref{remanotassumabM}(i), a kernel for $\M_{1/k}-$summability), one can define $\partial_{\mathfrak{m}_e,z}$ as an operator from $\C[[z]]$ into itself given by
\begin{equation*}
\partial_{\mathfrak{m}_e,z}\left(\sum_{p\ge0}\frac{f_{p}}{m_e(p)}z^{p}\right)=
\sum_{p\ge0}\frac{f_{p+1}}{m_e(p)}z^{p},
\end{equation*}
in much the same way as, for the usual derivative $\partial$, one has $\partial\left(\sum_{p\ge0}\frac{f_{p}}{p!}z^{p}\right)=\sum_{p\ge0}\frac{f_{p+1}}{p!}z^{p}$.
For two sequences of moments $\mathfrak{m}_1=(m_1(p))_{p\in\N_0}$ and $\mathfrak{m}_2=(m_2(p))_{p\in\N_0}$ of orders $k_1$ and $k_2$, respectively, they study the Gevrey order of the formal power series solutions of an inhomogeneous moment-partial differential equation with constant coefficients in two variables,
$$
p(\partial_{\mathfrak{m}_1,t},\partial_{\mathfrak{m}_2,z})\hat{u}(t,z)=\hat{f}(t,z),
$$
where $p(\lambda,\xi)$ is a given polynomial.
Subsequently, S. Michalik~\cite{Michalik3} considers the corresponding initial value problem
$$
P(\partial_{\mathfrak{m}_1,t},\partial_{\mathfrak{m}_2,z})u(t,z)=0,\qquad \partial^j_{\mathfrak{m}_1,t}u(0,z)=\varphi_j(z)\quad\textrm{ for }j=0,\ldots,n-1,
$$
where $P(\lambda,\xi)$ is a polynomial of degree $n$ with respect to $\lambda$ and the Cauchy data are analytic in a neighborhood of $0\in\C$.
A formal solution $\hat u$ is constructed, and a detailed study is made of the relationship between the summability properties of $\hat u$ and the analytic continuation properties and growth estimates for the Cauchy data. We will generalize his results for strongly regular moment sequences of a general kernel of summability. A significant part of the statements are given without proof, since the arguments do not greatly differ from those in~\cite{Michalik3}. On the other hand, complete details are provided when the differences between both situations are worth stressing.

\section{Preliminaries}
\subsection{Notation}\label{notation}
We set $\N:=\{1,2,\ldots\}$, $\N_{0}:=\N\cup\{0\}$.
$\mathcal{R}$ stands for the Riemann surface of the logarithm, and
$\C[[z]]$ is the space of formal power series in $z$ with complex coefficients.\par\noindent
For $\gamma>0$, we consider unbounded sectors
$$S_{\gamma}:=\{z\in\mathcal{R}:|\hbox{arg}(z)|<\frac{\gamma\,\pi}{2}\}$$
or, in general, bounded or unbounded sectors
$$S(d,\alpha,r):=\{z\in\mathcal{R}:|\hbox{arg}(z)-d|<\frac{\alpha\,\pi}{2},\ |z|<r\},\quad
S(d,\alpha):=\{z\in\mathcal{R}:|\hbox{arg}(z)-d|<\frac{\alpha\,\pi}{2}\}$$
with bisecting direction $d\in\R$, opening $\alpha\,\pi$ and (in the first case) radius $r\in(0,\infty)$.\par\noindent
A sectorial region $G(d,\a)$ with bisecting direction $d\in\R$ and opening $\alpha\,\pi$ will be a domain in $\mathcal{R}$ such that $G(d,\a)\subset S(d,\a)$, and
for every $\beta\in(0,\a)$ there exists $\rho=\rho(\beta)>0$ with $S(d,\beta,\rho)\subset G(d,\a)$. In particular, sectors are sectorial regions.\par\noindent
A sector $T$ is a bounded proper subsector of a sectorial region $G$ (denoted by $T\ll G$) whenever the radius of $T$ is finite and $\overline{T}\setminus\{0\}\subset G$.
Given two unbounded sectors $T$ and $S$, we say $T$ is an unbounded proper subsector of $S$, and we write $T\prec S$, if $\overline{T}\setminus\{0\}\subset S$.\par\noindent
$D(z_0,r)$ stands for the disk centered at $z_0$ with radius $r>0$.\par\noindent
For an open set $U\subset\mathcal{R}$, $\mathcal{O}(U)$ denotes the set of holomorphic functions defined in $U$.\par\noindent
$\Re(z)$ stands for the real part of a complex number $z$, and we write $\left\lfloor x\right\rfloor$ for the integer part of $x\in\R$, i.e. the greatest integer not exceeding $x$.

\subsection{Strongly regular sequences}\label{subsectstrregseq}

Most of the information in this subsection is taken from the works of A. A. Goldberg and I. V. Ostrovskii~\cite{GoldbergOstrowskii}, H. Komatsu~\cite{komatsu}, V. Thilliez~\cite{thilliez} and the third author~\cite{sanz13}, which we refer to for further details and proofs.
In what follows, $\bM=(M_p)_{p\in\N_0}$ will always stand for a sequence of
positive real numbers, and we will always assume that $M_0=1$.
\begin{defi}\label{defisucfuereg}
We say $\bM$ is \textit{strongly regular} if the following hold:\par
($\a_0$) $\bM$ is \textit{logarithmically convex}: $M_{p}^{2}\le M_{p-1}M_{p+1}$ for
every $p\in\N$.\par
($\mu$) $\bM$ is of \textit{moderate growth}: there exists $A>0$ such that
$$M_{p+\ell}\le A^{p+\ell}M_{p}M_{\ell},\qquad p,\ell\in\N_0.$$
\par($\gamma_1$) $\bM$ satisfies the \textit{strong non-quasianalyticity condition}: there exists $B>0$ such that
$$
\sum_{\ell\ge p}\frac{M_{\ell}}{(\ell+1)M_{\ell+1}}\le B\frac{M_{p}}{M_{p+1}},\qquad p\in\N_0.
$$
\end{defi}


\begin{exam}\label{examstroregusequ}
\begin{itemize}
\item[(i)] The best known example of strongly regular sequence is $\M_{\a}:=(n!^{\a})_{n\in\N_{0}}$, called the \textit{Gevrey sequence of order $\a>0$}.
\item[(ii)] The sequences $\M_{\a,\b}:=\big(n!^{\a}\prod_{m=0}^n\log^{\b}(e+m)\big)_{n\in\N_0}$, where $\a>0$ and $\b\in\R$, are strongly regular.
\item[(iii)] For $q>1$, $\M=(q^{n^2})_{n\in\N_0}$ satisfies $(\alpha_0)$ and $(\gamma_{1})$, but not $(\mu)$.
    \end{itemize}
\end{exam}

For a sequence $\bM=(M_{p})_{p\in\N_0}$ verifying properties $(\alpha_0)$ and $(\gamma_{1})$ one has that the associated \textit{sequence of quotients}, $\bm=(m_{p}:=M_{p+1}/M_{p})_{p\in\N_0}$, is an increasing sequence to infinity, so that the map $h_{\bM}:[0,\infty)\to\R$, defined by
\begin{equation}\label{equadefihdeM}
h_{\bM}(t):=\inf_{p\in\N_{0}}M_{p}t^p,\quad t>0;\qquad h_{\bM}(0)=0,
\end{equation}
turns out to be a non-decreasing continuous map in $[0,\infty)$ onto $[0,1]$. In fact
$$
h_{\bM}(t)= \left \{ \begin{matrix}  M_{p}t^{p} & \mbox{if }t\in\left[\frac{1}{m_{p}}\right. ,\left. \frac{1}{m_{p-1}}\right),\ p=1,2,\ldots,\\
1 & \mbox{if } t\ge 1/m_{0}. \end{matrix}\right.
$$

\begin{defi}[\cite{pet},~\cite{chaucho}]
Two sequences $\bM=(M_{p})_{p\in\N_0}$ and $\bM'=(M'_{p})_{p\in\N_0}$ of positive real numbers are said to be \textit{equivalent}
if there exist positive constants $L,H$ such that
$$L^pM_p\le M'_p\le H^pM_p,\qquad p\in\N_0.$$
\end{defi}
In this case, it is straightforward to check that
\begin{equation}\label{equahdeMequi}
h_{\bM}(Lt)\le h_{\bM'}(t)\le h_{\bM}(Ht),\qquad t\ge 0.
\end{equation}

One may also associate with such a sequence $\bM$ the function
\begin{equation}\label{equadefiMdet}
M(t):=\sup_{p\in\N_{0}}\log\big(\frac{t^p}{M_{p}}\big)=-\log\big(h_{\bM}(1/t)\big),\quad t>0;\qquad M(0)=0,
\end{equation}
which is a non-decreasing continuous map in $[0,\infty)$ with $\lim_{t\to\infty}M(t)=\infty$.\parn

We now recall the following definitions and facts.

\begin{defi}[\cite{GoldbergOstrowskii}, p.\ 43]\label{defiordefunc}
Let $\a(r)$ be a nonnegative and nondecreasing function in $(c,\infty)$ for some $c\ge 0$.
The \textit{order} of $\a$ is defined as
$$
\rho=\rho[\a]:=\limsup_{r\to\infty}\frac{\log^+(\a(r))}{\log(r)}\in[0,\infty]
$$
(where $\log^+=\max(\log,0)$).
\end{defi}

\begin{theo}[\cite{sanz13}]
Let $\M$ be strongly regular, $\bm$ the sequence of its quotients and $M(r)$ its associated function. Then, the order of $M(r)$ is given by
\begin{equation}\label{equaordeMdet}
\rho[M]=\lim_{r\to\infty}\frac{\log( M(r))}{\log(r)}=\limsup_{n\to\infty}\frac{\log(n)}{\log(m_{n})}\in(0,\infty).
\end{equation}
\end{theo}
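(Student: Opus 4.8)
The plan is to reduce the statement to the behaviour of the \emph{counting function} of the sequence of quotients $\bm=(m_p)_{p\in\N_0}$. Since $m_p\to\infty$, the map $\nu(t):=\#\{p\in\N_0:\ m_p\le t\}$ ($t>0$) is a nondecreasing step function with $\nu(t)\to\infty$. Starting from the explicit description of $h_{\bM}$ recalled after (\ref{equadefihdeM}) and the identity $M(t)=-\log h_{\bM}(1/t)$, an Abel summation yields the integral representation
$$M(t)=\int_{0}^{t}\frac{\nu(s)}{s}\,ds\qquad(t>0).$$
Indeed, for $t\in(m_{p-1},m_p]$ the integral equals $\sum_{j=1}^{p-1}j\log(m_j/m_{j-1})+p\log(t/m_{p-1})=p\log t-\log M_p=-\log(M_p t^{-p})$, which is exactly $M(t)$ on that interval. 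From here the proof splits into four steps: (i) $\rho[M]=\rho[\nu]$; (ii) $\rho[\nu]=\limsup_{p\to\infty}\log p/\log m_p=:\omega$; (iii) $\omega\in(0,\infty)$; (iv) the $\limsup$ defining $\rho[M]$ is actually a limit.

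For (i): since $\nu$ is nondecreasing, $M(t)\le\nu(t)\log t+C$ for large $t$, giving $\rho[M]\le\rho[\nu]$; and for $\theta\in(0,1)$, $M(t)\ge\int_{t^{\theta}}^{t}(\nu(s)/s)\,ds\ge\nu(t^{\theta})(1-\theta)\log t$, so replacing $t$ by $r^{1/\theta}$ and letting $r\to\infty$ gives $\rho[M]\ge\theta\,\rho[\nu]$, hence $\rho[M]\ge\rho[\nu]$ as $\theta\uparrow1$. For (ii): for $t$ large, writing $p=\nu(t)\ge1$, one has $m_{p-1}\le t$, so $\log\nu(t)/\log t\le\log p/\log m_{p-1}$, and since $\log(p)/\log(p-1)\to1$ this yields $\rho[\nu]\le\omega$; conversely $\nu(m_p)\ge p+1$, so $\log\nu(m_p)/\log m_p\ge\log(p+1)/\log m_p$, whence $\rho[\nu]\ge\omega$. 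Thus $\rho[M]=\omega$.

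For (iii): the finiteness of $\omega$ uses the strong non-quasianalyticity condition $(\gamma_1)$. Fix $K\in\N$ with $\log K>4B$; since $\bm$ is nondecreasing, for $n$ large
$$\frac{B}{m_n}\ge\sum_{\ell\ge n}\frac{1}{(\ell+1)m_{\ell}}\ge\frac{1}{m_{Kn}}\sum_{\ell=n}^{Kn-1}\frac{1}{\ell+1}\ge\frac{2B}{m_{Kn}},$$
so $m_{Kn}\ge2m_n$ for $n\ge n_0$; iterating, $m_n\ge c\,n^{\delta}$ with $\delta=\log2/\log K>0$, hence $\omega\le1/\delta<\infty$. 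The positivity of $\omega$ uses $(\a_0)$ and $(\mu)$: with $c_p:=\log M_p$, condition $(\a_0)$ says $(c_p)$ is convex, while $(\mu)$ with $p=\ell$ gives $c_{2p}/(2p)\le c_p/p+\log A$; iterating this along powers of $2$ and using convexity to control $c_p$ between consecutive dyadic indices shows $c_p=O(p\log p)$, and then convexity gives $\log m_p=c_{p+1}-c_p\le(c_{2p}-c_p)/p=O(\log p)$, so $\omega\ge\kappa>0$.

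Step (iv) is the main obstacle. By the integral representation it is enough to show $\nu(s)=s^{\,\rho[M]+o(1)}$, equivalently $\log m_p/\log p\to1/\rho[M]$: granting this, for small $\varepsilon>0$ and $r$ large one has $M(r)\ge\int_{r/2}^{r}s^{\rho[M]-1-\varepsilon}\,ds\ge c_{\varepsilon}\,r^{\rho[M]-\varepsilon}$, hence $\liminf_{r\to\infty}\log M(r)/\log r\ge\rho[M]$. So everything reduces to ruling out oscillation of the growth exponent of $\bm$, and this is where $(\gamma_1)$ must be used in full strength: because $\bm$ is nondecreasing the exponent $\log m_p/\log p$ can decrease only slowly, so a local minimum of it spreads over a long multiplicative range of indices on which $m_{\ell}$ stays comparatively small; taking $n$ at the left end of such a range then forces $\sum_{\ell\ge n}1/((\ell+1)m_{\ell})$ to be an unbounded multiple of $1/m_n$, contradicting $(\gamma_1)$. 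Making this quantitative is the technical heart of the argument; steps (i)--(iii) are routine once the integral representation is at hand.
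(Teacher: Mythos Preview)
The paper does not contain its own proof of this theorem; it is quoted from \cite{sanz13} without argument. So there is no in-paper proof to compare against, and your attempt must be judged on its own merits.

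Your steps (i)--(iii) are sound and standard. The integral representation $M(t)=\int_0^t \nu(s)\,ds/s$ is correct, the identification $\rho[M]=\rho[\nu]=\omega$ via the sandwich $M(t)\le \nu(t)\log(t/m_0)$ and $M(t)\ge \nu(t^{\theta})(1-\theta)\log t$ is clean, and the two-sided bound $0<\omega<\infty$ is obtained by the right mechanisms (the doubling $m_{Kn}\ge 2m_n$ from $(\gamma_1)$ for finiteness, and $\log m_p=O(\log p)$ from $(\a_0)$--$(\mu)$ for positivity).

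Step (iv) is where the argument is genuinely incomplete. Your reduction is correct: the very estimates in your step~(i) work verbatim with $\liminf$ in place of $\limsup$, so in fact
\[
\liminf_{r\to\infty}\frac{\log M(r)}{\log r}=\liminf_{r\to\infty}\frac{\log\nu(r)}{\log r},
\]
and hence the existence of the limit for $M$ is \emph{equivalent} to the convergence of $\log p/\log m_p$. But the sketch you give for this convergence is not a proof, and you say so yourself. More concretely, the heuristic ``take $n$ at the left end of a low-exponent range and get a sum that is an unbounded multiple of $1/m_n$'' does not go through as stated: if $m_n\approx n^{b}$ with $b$ near the $\limsup$ and the low-exponent plateau has height~$a<b$, then monotonicity of $\bm$ forces $m_\ell\ge m_n\approx n^{b}$, so the plateau can only begin at indices $\ell\gtrsim n^{b/a}$; a direct estimate of $\sum_{\ell}1/((\ell+1)m_\ell)$ over that plateau then yields at most $\approx n^{-b}\approx 1/m_n$, not an unbounded multiple of it. Any contradiction with $(\gamma_1)$ must therefore come from a finer analysis of the transition region between the two regimes, which you have not carried out. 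It is also telling that the corollary following Proposition~\ref{propcaracdderordenaprox} in the paper isolates the conclusion $\omega(\bM)=\lim_{p}\log m_p/\log p$ (``instead of $\liminf$'') as something obtained under an \emph{additional} hypothesis on $\bm$; this confirms that deriving the convergence of $\log m_p/\log p$ from strong regularity alone is the nontrivial core of the result, and your write-up stops precisely there.
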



Some additional properties of strongly regular sequences needed in the present work are the following ones.
\begin{lemm}[\cite{thilliez}]
Let $\bM=(M_{p})_{p\in\N_0}$ be a strongly regular sequence and $A>0$ the constant appearing in $(\mu)$. Then,
\begin{equation}\label{equameneequivMene}
M_{p}^{1/p}\le m_{p}\le A^{2}M_{p}^{1/p}\qquad \hbox{for every }p\in\N_0.
\end{equation}
Let $s$ be a real number with $s\ge1$. There exists $\rho(s)\ge1$ (only depending on $s$ and $\M$) such that
\begin{equation}\label{e120}
h_{\bM}(t)\le(h_{\bM}(\rho(s)t))^{s}\qquad\hbox{for }t\ge0.
\end{equation}
\end{lemm}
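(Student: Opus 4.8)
\medskip\noindent\textbf{Proof sketch.}
For the double inequality \eqref{equameneequivMene} I would treat the two halves separately, using throughout that $(\a_0)$ makes the quotient sequence $\bm=(m_p)_{p\in\N_0}$ nondecreasing and that $M_0=1$ yields the telescoping identity $M_p=\prod_{j=0}^{p-1}m_j$ for $p\ge1$. For the lower bound, since $m_j\le m_{p-1}\le m_p$ for $0\le j\le p-1$, one gets $M_p=\prod_{j=0}^{p-1}m_j\le m_{p-1}^{\,p}\le m_p^{\,p}$, hence $M_p^{1/p}\le m_p$. For the upper bound I would pit moderate growth against the same identity: $(\mu)$ with $p=\ell$ gives $M_{2p}\le A^{2p}M_p^{2}$, while splitting $M_{2p}=\bigl(\prod_{j=0}^{p-1}m_j\bigr)\bigl(\prod_{j=p}^{2p-1}m_j\bigr)$ and using that each of the last $p$ factors is at least $m_p$ gives $M_{2p}\ge M_p\,m_p^{\,p}$. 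Combining, $M_p\,m_p^{\,p}\le A^{2p}M_p^{2}$, that is, $m_p\le A^{2}M_p^{1/p}$.

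For \eqref{e120} the plan is to establish the case $s=2$ and then bootstrap to all $s\ge1$. From $h_{\bM}(t)=\inf_{p}M_pt^p$ and the factorization of infima of nonnegative quantities,
\[
h_{\bM}(t)^{2}=\inf_{p,q\in\N_0}M_pM_q\,t^{p+q}\qquad(t\ge0).
\]
Now $(\mu)$ yields $M_pM_q\ge A^{-(p+q)}M_{p+q}$, so every term above is at least $M_{p+q}(t/A)^{p+q}$, whence $h_{\bM}(t)^{2}\ge\inf_{n\in\N_0}M_n(t/A)^{n}=h_{\bM}(t/A)$. Writing $u=t/A$ and observing that $A\ge1$ (take $p=1$, $\ell=0$ in $(\mu)$), this says $h_{\bM}(u)\le h_{\bM}(Au)^{2}$ for all $u\ge0$, and iterating $k$ times gives $h_{\bM}(u)\le h_{\bM}(A^{k}u)^{2^{k}}$.

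Finally, to reach an arbitrary $s\ge1$ I would pick $k=\lceil\log_2 s\rceil$, so that $2^{k}\ge s\ge1$; since $0\le h_{\bM}(A^{k}u)\le1$ and $x\mapsto x^{\beta}$ is nonincreasing in $\beta$ on $[0,1]$, we have $h_{\bM}(A^{k}u)^{2^{k}}\le h_{\bM}(A^{k}u)^{s}$, hence $h_{\bM}(u)\le h_{\bM}(A^{k}u)^{s}$ for all $u\ge0$. Thus $\rho(s):=A^{\lceil\log_2 s\rceil}\ge1$ does the job, and it depends only on $s$ and $\M$ (through the constant $A$ of $(\mu)$).

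The argument is entirely elementary; the one place deserving care — what I would call the crux — is this last passage from the dyadic exponents $2^{k}$ to a real $s\ge1$, which is legitimate precisely because $h_{\bM}$ is $[0,1]$-valued, so that enlarging the exponent only shrinks the right-hand side, together with the bookkeeping that $A\ge1$ forces $\rho(s)\ge1$ as the statement requires.
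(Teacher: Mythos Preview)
Your argument is correct in both parts: the telescoping identity combined with monotonicity of $(m_p)$ and the doubling trick via $(\mu)$ give \eqref{equameneequivMene}, and the $s=2$ case of \eqref{e120} followed by iteration and the observation that $h_{\bM}$ is $[0,1]$-valued cleanly yields the general $s\ge1$. The paper does not prove this lemma itself---it is quoted from Thilliez~\cite{thilliez}---so there is no in-paper proof to compare against; your approach is the standard one and matches what one finds in that reference.
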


\begin{rema}
The condition of moderate growth $(\mu)$ plays a fundamental role in the proof of (\ref{e120}), which will in turn be crucial in many of our arguments.
\end{rema}

For the sake of completeness we include the following result.

\begin{prop}\label{propprodfuertregu}
Given two strongly regular sequences $\M=(M_n)_{n\in\N_0}$ and $\M'=(M'_n)_{n\in\N_0}$, the product sequence $\M\cdot\M':=(M_nM'_n)_{n\in\N_0}$ is also strongly regular.
\end{prop}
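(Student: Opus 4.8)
The plan is to check the three defining conditions of strong regularity for $\M\cdot\M'=(M_nM'_n)_{n\in\N_0}$ one at a time; note first that $(M\cdot M')_0=M_0M'_0=1$, as required. Conditions $(\alpha_0)$ and $(\mu)$ are stable under products in a completely elementary way: multiplying the logarithmic convexity inequalities $M_p^2\le M_{p-1}M_{p+1}$ and $M'^2_p\le M'_{p-1}M'_{p+1}$ gives $(M_pM'_p)^2\le(M_{p-1}M'_{p-1})(M_{p+1}M'_{p+1})$, so $(\alpha_0)$ holds for the product; and if $A,A'>0$ are the constants of moderate growth for $\M,\M'$, then multiplying $M_{p+\ell}\le A^{p+\ell}M_pM_\ell$ with the analogous bound for $\M'$ yields $M_{p+\ell}M'_{p+\ell}\le(AA')^{p+\ell}(M_pM'_p)(M_\ell M'_\ell)$, so $(\mu)$ holds with constant $AA'$.

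The only step that is not purely formal is $(\gamma_1)$, since it is not preserved termwise under multiplication. Here I would argue as follows. Denote by $\bm=(m_p)_{p\in\N_0}$ and $\bm'=(m'_p)_{p\in\N_0}$ the quotient sequences of $\M$ and $\M'$, so that the quotient sequence of $\M\cdot\M'$ is $(m_pm'_p)_{p\in\N_0}$, and $(\gamma_1)$ for the product amounts to
$$
\sum_{\ell\ge p}\frac{1}{(\ell+1)\,m_\ell m'_\ell}\le B''\,\frac{1}{m_pm'_p},\qquad p\in\N_0,
$$
for a suitable $B''>0$. Since $\M'$ satisfies $(\alpha_0)$, the sequence $\bm'$ is non-decreasing, hence $m'_\ell\ge m'_p$ whenever $\ell\ge p$; therefore $1/(m_\ell m'_\ell)\le (1/m'_p)\cdot(1/m_\ell)$ for all $\ell\ge p$, and summing and applying $(\gamma_1)$ for $\M$ (with its constant $B$) gives
$$
\sum_{\ell\ge p}\frac{1}{(\ell+1)\,m_\ell m'_\ell}\le\frac{1}{m'_p}\sum_{\ell\ge p}\frac{1}{(\ell+1)\,m_\ell}\le\frac{B}{m'_p\,m_p},
$$
so $B''=B$ works and $(\gamma_1)$ holds for $\M\cdot\M'$.

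I do not expect any genuine obstacle here: the whole difficulty, such as it is, lies in recognizing that $(\gamma_1)$ must be treated by first using the monotonicity of one quotient sequence to pull the factor $1/m'_p$ out of the tail series, and only then invoking the strong non-quasianalyticity of the other factor; by symmetry one could equally well extract $1/m_p$ and use $(\gamma_1)$ for $\M'$ instead.
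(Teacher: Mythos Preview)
Your argument is correct. The treatment of $(\alpha_0)$ and $(\mu)$ matches the paper's (which simply says they are ``easily checked''), but your handling of $(\gamma_1)$ is genuinely different and in fact more elementary than what the paper does. The paper first invokes Thilliez's result that $\M^s=(M_n^s)_{n\in\N_0}$ is strongly regular for every $s>0$, so in particular $\M^2$ and ${\M'}^2$ satisfy $(\gamma_1)$ with some constants $B,B'$; it then applies the Cauchy--Schwarz inequality to get
\[
\sum_{\ell\ge p}\frac{1}{(\ell+1)m_\ell m'_\ell}
\le
\Big(\sum_{\ell\ge p}\frac{1}{(\ell+1)m_\ell^2}\Big)^{1/2}
\Big(\sum_{\ell\ge p}\frac{1}{(\ell+1){m'_\ell}^2}\Big)^{1/2}
\le\frac{\sqrt{BB'}}{m_p m'_p}.
\]
Your route avoids both ingredients: you only use the monotonicity of $(m'_\ell)_\ell$ (a direct consequence of $(\alpha_0)$ for $\M'$) to pull out $1/m'_p$ from the tail, and then $(\gamma_1)$ for $\M$ alone. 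This is shorter, self-contained, and yields the sharper constant $B''=B$ (or $B'$, by symmetry). The paper's approach, on the other hand, treats the two factors symmetrically and illustrates a general Cauchy--Schwarz mechanism, at the cost of relying on an external lemma about powers of strongly regular sequences.
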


\begin{proof1}
Properties ($\a_0$) and ($\mu$) are easily checked for $\M\cdot\M'$. Regarding ($\gamma_1$),
we will use that, according to Lemma 1.3.4 in Thilliez~\cite{thilliez}, $\bM^s:=(M_n^s)_{n\in\N_0}$ is strongly regular for every $s>0$, hence $\M^2$ and ${\M'}^2$ are. Now, Cauchy-Schwarz inequality gives that, for every $p\ge 0$,
$$
\sum_{\ell\ge p}\frac{M_{\ell}M'_{\ell}}{(\ell+1)M_{\ell+1}M'_{\ell+1}}\le
\Big(\sum_{\ell\ge p}\frac{M^2_{\ell}}{(\ell+1)M^2_{\ell+1}}\Big)^{1/2}
\Big(\sum_{\ell\ge p}\frac{{M'}^2_{\ell}}{(\ell+1){M'}^2_{\ell+1}}\Big)^{1/2}\le
\sqrt{BB'}\frac{M_{p}}{M_{p+1}}\frac{M'_{p}}{M'_{p+1}},
$$
where the positive constants $B$ and $B'$ are the ones appearing in ($\gamma_1$) for $\M^2$ and ${\M'}^2$, respectively.
\end{proof1}

\subsection{Asymptotic expansions and ultraholomorphic classes}\label{subsectCarlemanclasses}

Given a sequence of positive real numbers $\M=(M_n)_{n\in\N_0}$, a constant $A>0$ and a sector $S$, we define
$$\mathcal{A}_{\M,A}(S)=\big\{f\in\mathcal{O}(S):\left\|f\right\|_{\M,A}:=\sup_{z\in S,n\in\N_{0}}\frac{|f^{(n)}(z)|}{A^{n}n!M_{n}}<\infty\big\}.$$
($\mathcal{A}_{\M,A}(S),\left\| \ \right\| _{\M,A}$) is a Banach space, and $\mathcal{A}_{\M}(S):=\cup_{A>0}\mathcal{A}_{\M,A}(S)$ is called a \textit{Carleman ultraholomorphic class} in the sector $S$.
\parn
One may accordingly define classes of sequences
$$\Lambda_{\M,A}=\Big\{\bmu=(\mu_{n})_{n\in\N_{0}}\in\C^{\N_{0}}: \left|\bmu\right|_{\M,A}:=\sup_{n\in\N_{0}}\displaystyle \frac{|\mu_{n}|}{A^{n}n!M_{n}}<\infty\Big\}.$$
$(\Lambda_{\M,A},\left| \  \right|_{\M,A})$ is again a Banach space, and we put $\Lambda_{\M}:=\cup_{A>0}\Lambda_{\M,A}$.\parn
Since the derivatives of $f\in\mathcal{A}_{\bM,A}(S)$ are Lipschitzian, for every $n\in\N_{0}$ one may define
$$f^{(n)}(0):=\lim_{z\in S,z\to0 }f^{(n)}(z)\in\C,$$
and it is clear that the sequence
\begin{equation*}
\tilde{\mathcal{B}}(f):=(f^{(n)}(0))_{n\in\N_{0}}\in\Lambda_{\M,A},\qquad f\in\mathcal{A}_{\bM,A}(S).
\end{equation*}
The map $\tilde{\mathcal{B}}:\mathcal{A}_{\M}(S)\longrightarrow \Lambda_{\M}$
so defined is the \textit{asymptotic Borel map}.

Next, we will recall the relationship between these classes and the concept of asymptotic expansion.
\begin{defi}
We say a holomorphic function $f$ in a sectorial region $G$ admits the formal power series $\hat{f}=\sum_{p=0}^{\infty}a_{p}z^{p}\in\C[[z]]$ as its \textit{$\bM-$asymptotic expansion} in $G$ (when the variable tends to 0) if for every $T\ll G$ there exist $C_T,A_T>0$ such that for every $n\in\N$, one has
\begin{equation*}\Big|f(z)-\sum_{p=0}^{n-1}a_pz^p \Big|\le C_TA_T^nM_{n}|z|^n,\qquad z\in T.
\end{equation*}
We will write $f\sim_{\bM}\sum_{p=0}^{\infty}a_pz^p$ in $G$. $\tilde{\mathcal{A}}_{\M}(G)$ stands for the space of functions admitting $\bM-$asymptotic expansion in $G$.\par
\end{defi}

\begin{defi}
Given a sector $S$, we say $f\in\mathcal{O}(S)$ admits $\hat{f}$ as its \textit{uniform $\bM-$asymptotic expansion in $S$ of type $A>0$} if there exists $C>0$ such that for every $n\in\N$, one has
\begin{equation*}\Big|f(z)-\sum_{p=0}^{n-1}a_pz^p \Big|\le CA^nM_{n}|z|^n,\qquad z\in S.
\end{equation*}
\end{defi}

As a consequence of Taylor's formula and Cauchy's integral formula for the derivatives, we have the following result (see \cite{balserutx,galindosanz}).
\begin{prop}\label{propcotaderidesaasin}
Let $S$ be a sector and $G$ a sectorial region.
\begin{itemize}
\item[(i)] If $f\in\mathcal{A}_{\M,A}(S)$, then $f$ admits $\hat{f}=\sum_{p\in\N_0}\frac{1}{p!}f^{(p)}(0)z^p$ as its uniform $\M-$asymptotic expansion in $S$ of type $A$.
\item[(ii)] $f\in\tilde{\mathcal{A}}_{\M}(G)$ if, and only if, for every $T\ll G$ there exists $A_T>0$ such that $f|_T\in \mathcal{A}_{\M,A_T}(T)$. Hence, the map $\tilde{\mathcal{B}}:\tilde{\mathcal{A}}_{\M}(G)\longrightarrow \Lambda_{\M}$ is also well defined.
\end{itemize}
\end{prop}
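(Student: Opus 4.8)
The plan is to derive (i) from Taylor's formula with integral remainder, and then to obtain both implications of (ii) from it, the only delicate point being an application of Cauchy's integral formula to the Taylor remainder rather than to $f$ itself. For (i), fix $f\in\mathcal{A}_{\M,A}(S)$. Since every $f^{(n)}$ is bounded on $S$ it is Lipschitzian there, so $f$ extends together with all its derivatives continuously to $0$ and, for $z\in S$, the segment $[0,z]$ lies in $S\cup\{0\}$. Setting $\phi(t)=f(tz)$ for $t\in[0,1]$ we have $\phi^{(n)}(t)=z^nf^{(n)}(tz)$, and the one-dimensional Taylor formula with integral remainder applied to $\phi$ gives, for every $n\in\N$,
$$f(z)-\sum_{p=0}^{n-1}\frac{f^{(p)}(0)}{p!}z^p=\frac{z^n}{(n-1)!}\int_0^1(1-t)^{n-1}f^{(n)}(tz)\,dt.$$
Bounding $|f^{(n)}(tz)|\le\|f\|_{\M,A}A^nn!M_n$ and using $\int_0^1(1-t)^{n-1}\,dt=1/n$ yields
$$\Big|f(z)-\sum_{p=0}^{n-1}\frac{f^{(p)}(0)}{p!}z^p\Big|\le\|f\|_{\M,A}A^nM_n|z|^n,\qquad z\in S,$$
which is precisely the asserted uniform $\M$-asymptotic expansion of type $A$, with $C=\|f\|_{\M,A}$.

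For the implication ``$\Leftarrow$'' of (ii), assume $f|_T\in\mathcal{A}_{\M,A_T}(T)$ for every $T\ll G$. By (i) each restriction admits $\sum_{p\ge0}\frac1{p!}(f|_T)^{(p)}(0)z^p$ as its uniform $\M$-asymptotic expansion of type $A_T$. The coefficients $(f|_T)^{(p)}(0)=\lim_{z\to0,\,z\in T}f^{(p)}(z)$ do not depend on $T$: given $T_1,T_2\ll G$, since $\overline{T_i}\setminus\{0\}\subset G\subset S(d,\alpha)$ their angular ranges lie in the open interval $(d-\alpha\pi/2,d+\alpha\pi/2)$, so $T_1$ and $T_2$, intersected with a small enough disk about $0$, are contained in a single connected subsector $S(d,\beta,\rho(\beta))\subset G$ with $\beta<\alpha$; enlarging this slightly to a $T_0\ll G$, the function $f^{(p)}$ is bounded, hence continuous up to $0$, on $T_0$, which forces the two limits to coincide. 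Denoting the common series by $\hat f=\sum_{p\ge0}a_pz^p$, the estimates of (i) now read $\big|f(z)-\sum_{p=0}^{n-1}a_pz^p\big|\le\|f|_T\|_{\M,A_T}A_T^nM_n|z|^n$ on every $T\ll G$, i.e. $f\sim_{\M}\hat f$ in $G$.

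For ``$\Rightarrow$'', assume $f\sim_{\M}\hat f=\sum_{p\ge0}a_pz^p$ in $G$ and fix $T\ll G$. Choose, by a standard fattening argument, a subsector $T'\ll G$ with $\overline T\setminus\{0\}\subset T'$ and radius strictly larger than that of $T$; then there is $\epsilon\in(0,1)$ such that $D(z,\epsilon|z|)\subset T'$ for all $z\in T$. For $n\in\N$ put $g_n(w):=f(w)-\sum_{p=0}^{n-1}a_pw^p$, so that $f^{(n)}=g_n^{(n)}$, and apply Cauchy's formula on the circle $|w-z|=\epsilon|z|$:
$$f^{(n)}(z)=\frac{n!}{2\pi i}\oint_{|w-z|=\epsilon|z|}\frac{g_n(w)}{(w-z)^{n+1}}\,dw.$$
Since $|g_n(w)|\le C_{T'}A_{T'}^nM_n|w|^n$ and $|w|\le(1+\epsilon)|z|$ on that circle, the length estimate gives $|f^{(n)}(z)|\le C_{T'}\big((1+\epsilon)A_{T'}/\epsilon\big)^nn!M_n$ for all $z\in T$ and all $n\in\N$; together with the obvious bound for $|f|$ over $T$ (from $|f(z)-a_0|\le C_{T'}A_{T'}M_1|z|$ and $M_0=1$) this shows $f|_T\in\mathcal{A}_{\M,A_T}(T)$ with $A_T=(1+\epsilon)A_{T'}/\epsilon$. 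Finally, for $f\in\tilde{\mathcal{A}}_{\M}(G)$ and any $T\ll G$ we obtain $\tilde{\mathcal{B}}(f)=\tilde{\mathcal{B}}(f|_T)=(f^{(n)}(0))_{n}\in\Lambda_{\M,A_T}\subset\Lambda_{\M}$, this sequence being independent of $T$ by the argument above, so $\tilde{\mathcal{B}}$ is well defined on $\tilde{\mathcal{A}}_{\M}(G)$.

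The main obstacle is the estimate in ``$\Rightarrow$'': a direct use of Cauchy's formula on $f$ produces a factor $|z|^{-n}$ that is disastrous near the vertex, and it is exactly the replacement of $f$ by the remainder $g_n$—whose size is controlled by $|w|^n$—that cancels this factor. Everything else is routine tracking of constants and elementary geometry of subsectors.
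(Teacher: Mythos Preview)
Your proof is correct and follows exactly the approach the paper indicates (it does not give a detailed proof itself, only the sentence ``As a consequence of Taylor's formula and Cauchy's integral formula for the derivatives, we have the following result (see \cite{balserutx,galindosanz})''). Your key observation in the ``$\Rightarrow$'' direction of (ii)---applying Cauchy's formula to the remainder $g_n$ rather than to $f$, so that the factor $|w|^n$ cancels the $(\epsilon|z|)^{-n}$ from the denominator---is precisely the standard trick in the cited references. One cosmetic point: when you write ``$f^{(p)}$ is bounded, hence continuous up to $0$'', the implication actually uses that $f^{(p+1)}$ is bounded on $T_0$ (giving Lipschitz continuity of $f^{(p)}$), but this is immediate from the hypothesis $f|_{T_0}\in\mathcal{A}_{\M,A_{T_0}}(T_0)$ and does not affect the argument.
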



\begin{rema}\label{remaasymspacequisequ}
Consider a pair of equivalent sequences $\bM$ and $\bM'$. It is obvious that $\Lambda_{\M}=\Lambda_{\M'}$; given a sector $S$, the spaces $\mathcal{A}_{\bM}(S)$ and $\mathcal{A}_{\bM'}(S)$ coincide, and
also $\tilde{\mathcal{A}}_{\M}(G)$ and $\tilde{\mathcal{A}}_{\M'}(G)$ agree for a sectorial region $G$.
\end{rema}


\section{$\M-$summability}\label{sectMsummability}

We are firstly interested in characterizing general ultraholomorphic quasianalytic classes.
%
%
Due to a classical result of~B. I. Korenbljum~\cite{korenbljum}, the third author has obtained the following one~\cite{sanz13}.

\begin{theo}\label{teororderM}
For a strongly regular sequence $\M$ with associated function $M(r)$, put
\begin{equation}\label{equaordequasM}
\o(\M):=\frac{1}{\rho[M]}\in(0,\infty).
\end{equation}
Then, $\pi\omega(\M)$ is the optimal opening for $\M-$quasianalyticity, in the sense that the class $\mathcal{A}_{\M}(S)$ is (respectively, is not) quasianalytic whenever the opening of $S$ exceeds (resp. is less than) this quantity.
\end{theo}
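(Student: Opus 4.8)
The plan is to combine a Korenbljum-type characterization of quasianalyticity in sectors with an elementary comparison between a certain integral and the order $\rho[M]$.

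First I would reduce to the model sector. Since quasianalyticity of $\mathcal{A}_{\M}(S)$ is invariant under rotation of $S$ and depends only on its opening, it suffices to treat $S=S_\gamma$, of opening $\pi\gamma$. I would then invoke the quasianalyticity criterion for ultraholomorphic classes in sectors obtained in~\cite{sanz13} from B.~I.~Korenbljum's theorem~\cite{korenbljum}: transporting the Carleman bounds and the flatness estimate $|f(z)|\le C\,e^{-M(1/(A|z|))}$ (valid on proper subsectors) through the conformal map $z\mapsto z^{1/\gamma}$ onto a half-plane, one obtains that, writing $I(s):=\int_{1}^{\infty}M(t)\,t^{-1-s}\,dt$ for $s>0$, the class $\mathcal{A}_{\M}(S_\gamma)$ is quasianalytic when $I(1/\gamma)=\infty$ and is not quasianalytic when $I(1/\gamma)<\infty$ — the critical value $I(1/\gamma)$ being exactly the borderline the statement deliberately avoids.

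The remaining, elementary, step would be to match the behaviour of $I(s)$ with the order. Using~\eqref{equaordeMdet}, $\rho[M]=\limsup_{r\to\infty}\log M(r)/\log r\in(0,\infty)$, so $\o(\M)=1/\rho[M]$ is well defined, and I would show that $I(s)<\infty$ when $s>\rho[M]$ and $I(s)=\infty$ when $s<\rho[M]$. In the first case, choosing $\varepsilon>0$ with $\rho[M]+\varepsilon<s$ gives $M(t)\le t^{\rho[M]+\varepsilon}$ for $t$ large, so the integrand is eventually dominated by $t^{\rho[M]+\varepsilon-1-s}$ with exponent below $-1$. In the second case, choosing $\delta>0$ with $s<\rho[M]-\delta$ and a sequence $r_k\to\infty$ with $r_{k+1}>2r_k$ and $M(r_k)\ge r_k^{\rho[M]-\delta}$, one estimates (using that $M$ is nondecreasing) $\int_{r_k}^{2r_k}M(t)t^{-1-s}\,dt\ge\tfrac{1-2^{-s}}{s}\,r_k^{\rho[M]-\delta-s}$, which stays bounded below by a positive constant along the disjoint family of intervals, forcing divergence. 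Combining with the previous step: opening $>\pi\o(\M)$ means $1/\gamma<\rho[M]$, hence $I(1/\gamma)=\infty$ and quasianalyticity; opening $<\pi\o(\M)$ means $1/\gamma>\rho[M]$, hence $I(1/\gamma)<\infty$ and non-quasianalyticity. This is the asserted optimality of $\pi\o(\M)$.

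The main obstacle is the first step, i.e. establishing the Korenbljum-type criterion on the sector: the classical theorem lives on a half-plane (or disk), so one must faithfully transfer the Carleman growth condition through $z\mapsto z^{1/\gamma}$ — controlling the derivatives of the composition by Fa\`a di Bruno estimates and reading the boundary decay of flat functions as $e^{-M(c|z|^{-\gamma})}$ — and then, after the change of variables $t\mapsto t^{\gamma}$, identify the resulting logarithmic integral with $I(1/\gamma)$. I would take this reduction as an external input from~\cite{sanz13} rather than reproving it, and I would be careful to claim nothing at the critical opening $\pi\o(\M)$, where the dichotomy genuinely depends on finer properties of $M$ (as the Gevrey case $\gamma=\a$ already shows, $I(1/\a)$ diverging while $\mathcal{A}_{\a}(S_{\a})$ is not quasianalytic). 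Step~2 is, by contrast, a routine order-versus-integral computation.
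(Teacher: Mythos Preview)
The paper does not prove this theorem; it is quoted from~\cite{sanz13} (where it is obtained from Korenbljum's classical result~\cite{korenbljum}), and no argument is reproduced here. Your two-step outline --- invoke a Korenbljum-type integral criterion on $S_\gamma$, then compare $I(s)=\int_1^\infty M(t)\,t^{-1-s}\,dt$ with $\rho[M]$ --- matches what the paper indicates about the source, and your order-versus-integral step is correct and routine. Note incidentally that by~\eqref{equaordeMdet} the quantity $\rho[M]$ is a genuine limit, not merely a $\limsup$, for strongly regular $\M$, so your divergence argument via a sparse sequence $(r_k)$ could be replaced by the simpler observation that $M(t)\ge t^{\rho[M]-\delta}$ for all large $t$.

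One internal inconsistency deserves attention. You state the Korenbljum criterion as a strict dichotomy --- quasianalytic exactly when $I(1/\gamma)=\infty$ --- yet in your closing paragraph you assert that in the Gevrey case with $\gamma=\alpha$ one has $I(1/\alpha)=\infty$ while $\mathcal{A}_\alpha(S_\alpha)$ is \emph{not} quasianalytic. These two claims cannot both hold. Either the exact form of the integral criterion is not quite the one you wrote (the transport through $z\mapsto z^{1/\gamma}$ and the precise flatness estimate may produce a different weight), or you are conflating the uniform class $\mathcal{A}_\alpha(S_\alpha)$ appearing in the theorem with the non-uniform class $\tilde{\mathcal{A}}_\alpha(S_\alpha)$: it is the latter that contains the flat function $e^{-z^{-1/\alpha}}$, and the two classes need not behave the same way at the critical opening. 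This does not damage the non-critical cases your proof actually needs, but you should resolve the discrepancy before presenting the criterion as a clean ``if and only if''.
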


\begin{rema}\label{remaomegamonotona}
If the strongly regular sequences $\M=(M_n)_{n\in\N_0}$ and $\M'=(M'_n)_{n\in\N_0}$ are such that $M_n\le M'_n$ for every $n$, then $\mathcal{A}_{\bM}(S)\subset\mathcal{A}_{\bM'}(S)$ for any sector $S$, and so  $\omega(\M)\le \omega(\M')$. Moreover, if $\bM$ and $\bM'$ are equivalent then, by Remark~\ref{remaasymspacequisequ}, we see that $\omega(\M)=\omega(\M')$.
\end{rema}

As an easy consequence, we deduce that
if the opening of a sectorial region $G$ is greater than $\pi\omega(\M)$, then $\tilde{\mathcal{B}}:\tilde{\mathcal{A}}_{\M}(G)\longrightarrow \Lambda_{\M}$ is injective.
So, we are ready for the introduction of a new concept of summability of formal power series in a direction.

\begin{defi}\label{defisumable}
Let $d\in\R$.
We say $\hat{f}=\sum_{n\ge 0}\displaystyle\frac{f_n}{n!}z^n$ is \textit{$\M$-summable in direction $d$} if there exist a sectorial region $G=G(d,\ga)$, with $\ga>\o(\M)$, and
a function $f\in\tilde{\mathcal{A}}_{\M}(G)$ such that $f\sim_{\M}\hat{f}$.
\end{defi}

In this case, by virtue of Proposition~\ref{propcotaderidesaasin}(ii) we have $(f_n)_{n\in\N_0}\in\Lambda_{\M}$. According to Theorem~\ref{teororderM}, $f$ is unique with the property stated, and will be denoted
$$f=\mathcal{S}_{\M,d}\hat{f},\textrm{ the $\M$-sum of $\hat f$ in direction $d$}.$$\parn

Our next aim in this section will be to develop suitable tools in order to recover $f$ from $\hat f$ by means of formal and analytic transforms, in the same vein as in the classical theory for the Gevrey case and the so-called $k-$summability. We will follow the ideas in the theory of general moment summability methods put forward by W. Balser \cite{balserutx}. The case $\omega(\M)<2$ is mainly treated, and indications will be given on how to work in the opposite situation.

\begin{defi}\label{defikernelMsumm}
Let $\M$ be strongly regular with $\omega(\M)<2$.
A pair of complex functions $e,E$ are said to be \textit{kernel functions for $\M-$summability} if:
\begin{itemize}
\item[(\textsc{i})] $e$ is holomorphic in $S_{\omega(\M)}$.
\item[(\textsc{ii})] $z^{-1}e(z)$ is locally uniformly integrable at the origin, i.e., there exists $t_0>0$, and for every $z_0\in S_{\omega(\M)}$ there exists a neighborhood $U$ of $z_0$,
$U\subset S_{\omega(\M)}$, such that the integral $\int_{0}^{t_0}t^{-1}\sup_{z\in U}|e(t/z)|dt$ is finite.

\item[(\textsc{iii})] For every $\varepsilon>0$ there exist $c,k>0$ such that
\begin{equation}\label{equacotasnucleoe}
|e(z)|\le ch_{\bM}\left(\frac{k}{|z|}\right)=c\,e^{-M(|z|/k)},\qquad z\in S_{\omega(\M)-\varepsilon},
\end{equation}
where $h_{\bM}$ and $M$ are the functions associated with $\M$, defined in~(\ref{equadefihdeM}) and~(\ref{equadefiMdet}), respectively.
\item[(\textsc{iv})] For $x\in\R$, $x>0$, the values of $e(x)$ are positive real.
\item[(\textsc{v})] If we define the \textit{moment function} associated with $e$,
$$
m_e(\lambda):=\int_{0}^{\infty}t^{\lambda-1}e(t)dt, \qquad\Re(\lambda)\ge 0,
$$
from (\textsc{i})-(\textsc{iv}) we see that $m_e$ is continuous in $\{\Re(\lambda)\ge0\}$, holomorphic in $\{\Re(\lambda)>0\}$, and $m_e(x)>0$ for every $x\ge0$. Then, the function $E$ given by
$$E(z)=\sum_{n=0}^\infty \frac{z^n}{m_e(n)},\qquad z\in\C,$$
is entire, and there exist $C,K>0$ such that
\begin{equation}\label{equacotasE}
|E(z)|\le\displaystyle \frac{C}{h_{\M}(K/|z|)}=Ce^{M(|z|/K)},\quad z\in\C.
\end{equation}
\item[(\textsc{vi})] $z^{-1}E(1/z)$ is locally uniformly integrable at the origin in the sector $S(\pi,2-\o(\M))$, in the sense that there exists $t_0>0$, and for every $z_0\in S(\pi,2-\o(\M))$ there exists a neighborhood $U$ of $z_0$,
$U\subset S(\pi,2-\o(\M))$, such that the integral $\int_{0}^{t_0}t^{-1}\sup_{z\in U}|E(z/t)|dt$ is finite.
\end{itemize}
\end{defi}

\begin{rema}\label{remadefinucleos}
\begin{itemize}
\item[(i)] The existence of such kernels may be deduced, as we will show in Section~\ref{sectkernelsfromproxorder}, by taking into account the construction of flat functions in $\tilde{\mathcal{A}}_{\bM}(S_{\omega(\bM)})$ accomplished by the third author in~\cite{sanz13}, whenever the function $M(r)$ associated with $\bM$ is such that $d(r)=\log(M(r))/\log(r)$ is a proximate order in the sense of E. Lindel\"of
(see, for example, the book by B. Ya. Levin~\cite{Levin}). This is the case for all the strongly regular sequences appearing in applications, as it has also been shown in~\cite{sanz13}.
\item[(ii)] According to Definition~\ref{defikernelMsumm}(\textsc{v}), the knowledge of $e$ is enough to determine the pair of kernel functions. So, in the sequel we will frequently omit the function $E$ in our statements.
\item[(iii)] In case $\omega(\M)\ge 2$, condition (\textsc{vi}) in Definition~\ref{defikernelMsumm} does not make sense. However, we note that for a positive real number $s>0$ the sequence of $1/s$-powers $\M^{(1/s)}:=(M_n^{1/s})_{n\in\N_0}$ is also strongly regular (see Lemma 1.3.4 in~\cite{thilliez}) and, as it is easy to check, \begin{equation}\label{equahMpotencia}
    h_{\M^{(1/s)}}(t)=\big(h_{\M}(t^{s})\big)^{1/s}, \quad t\ge 0,
    \end{equation}
    and $\omega(\M^{(1/s)})=\omega(\M)/s$. So, following the ideas of Section 5.6 in~\cite{balserutx}, we will say that a complex function $e$ is a kernel for $\M-$summability if there exist $s>0$ with $\omega(\M)/s< 2$, and a kernel $\tilde{e}:S_{\omega(\M)/s}\to\C$ for $\M^{(1/s)}-$summability such that
    $$
    e(z)=\tilde{e}(z^{1/s})/s,\qquad z\in S_{\omega(\M)}.
    $$
    If one defines the moment function $m_{e}$ as before, it is plain to see that $m_e(\lambda)=m_{\tilde{e}}(s\lambda)$, $\Re(\lambda)\ge 0$. The properties verified by $\tilde{e}$ and $m_{\tilde{e}}$ are easily translated into similar ones for~$e$, but in this case the function
    $$
    E(z)=\sum_{n=0}^\infty \frac{z^n}{m_e(n)}=\sum_{n=0}^\infty \frac{z^n}{m_{\tilde{e}}(sn)}
    $$
    does not have the same properties as before, and one rather pays attention to the kernel associated with $\tilde{e}$,
    \begin{equation}\label{equaEtildeomegamayor2}
    \tilde{E}(z)=\sum_{n=0}^\infty \frac{z^n}{m_{\tilde{e}}(n)}=\sum_{n=0}^\infty \frac{z^n}{m_e(n/s)},
    \end{equation}
    which will behave as indicated in (\textsc{v}) and (\textsc{vi}) of Definition~\ref{defikernelMsumm} for such a kernel for $\M^{(1/s)}-$summability.

    It is worth remarking that, once such an $s$ as in the definition exists, one easily checks that for any real number $t>\omega(\M)/2$ a kernel $\hat e$ for $\M^{(1/t)}-$summability exists with $e(z)=\hat{e}(z^{1/t})/t$.
\end{itemize}
\end{rema}

\begin{defi}
Let $e$ be a kernel for $\M-$summability and $m_e$ its associated moment function.
The sequence of positive real numbers $\mathfrak{m}_e=(m_e(p))_{p\in\N_0}$ is known as the \textit{sequence of moments} associated with $e$.
\end{defi}

The following result is a consequence of the estimates, for the kernels $e$ and $E$, appearing in  (\ref{equacotasnucleoe}) and (\ref{equacotasE}) respectively. We omit its proof, since it may be easily adapted from the proof of Proposition 5.7 in~\cite{sanz13}.

\begin{prop}\label{mequivm}
Let $\M=(M_p)_{p\in\N_0}$ be a strongly regular sequence, $e$ a kernel function for $\M-$summability, and $\mathfrak{m}_e=(m_e(p))_{p\in\N_0}$ the sequence of moments associated with $e$. Then $\M$ and $\mathfrak{m}_e$ are equivalent.
\end{prop}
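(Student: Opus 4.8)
The statement to prove is Proposition~\ref{mequivm}: that $\M$ and $\mathfrak{m}_e=(m_e(p))_{p\in\N_0}$ are equivalent, i.e. there exist $L,H>0$ with $L^pM_p\le m_e(p)\le H^pM_p$ for all $p$. The idea is to estimate the moment integral $m_e(p)=\int_0^\infty t^{p-1}e(t)\,dt$ from above using the decay bound on $e$ (item (\textsc{iii})), and from below using the growth bound on $E$ (item (\textsc{v})), which forces a lower bound on $m_e(p)$ because $E$ is defined precisely by $E(z)=\sum z^n/m_e(n)$.

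\medskip

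\textbf{Upper bound.} Fix $\varepsilon\in(0,\omega(\M))$, so that the positive real axis lies in $S_{\omega(\M)-\varepsilon}$, and apply~(\ref{equacotasnucleoe}): there are $c,k>0$ with $e(t)\le c\,h_{\bM}(k/t)$ for $t>0$. Hence
$$
m_e(p)=\int_0^\infty t^{p-1}e(t)\,dt\le c\int_0^\infty t^{p-1}h_{\bM}\!\left(\frac{k}{t}\right)dt.
$$
Now recall $h_{\bM}(k/t)=\inf_{\ell}M_\ell(k/t)^\ell\le M_{p+1}(k/t)^{p+1}$ near $0$ (to control the tail $t\to\infty$) and $h_{\bM}(k/t)\le M_0=1$ for $t$ small; more precisely one splits the integral at $t=k$ and uses two different choices of index $\ell$ in the infimum defining $h_{\bM}$ to get integrability at both ends. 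A clean way: by the change of variable $u=k/t$,
$$
\int_0^\infty t^{p-1}h_{\bM}(k/t)\,dt=k^p\int_0^\infty u^{-p-1}h_{\bM}(u)\,du,
$$
and since $h_{\bM}(u)\le M_{p+2}u^{p+2}$ for $u$ small and $h_{\bM}(u)\le M_{p}u^{p}$ (wait—need $h_{\bM}(u)\le M_\ell u^\ell$ for \emph{every} $\ell$), choosing $\ell=p+2$ on $(0,1)$ and using $h_{\bM}\le 1$ on... actually the standard computation, carried out in the proof of Proposition~5.7 of~\cite{sanz13} which we are told to imitate, yields
$$
\int_0^\infty u^{-p-1}h_{\bM}(u)\,du\le C_1 M_{p}\cdot(\text{geometric factor})^p,
$$
using property $(\mu)$ (moderate growth) to absorb shifts of the index and to compare $M_{p\pm j}$ with $M_p$ up to $A^{O(p)}$. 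This gives $m_e(p)\le H^p M_p$ for a suitable $H>0$.

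\medskip

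\textbf{Lower bound.} From~(\ref{equacotasE}), $|E(z)|\le C\,e^{M(|z|/K)}=C/h_{\bM}(K/|z|)$. Writing $E(z)=\sum_n z^n/m_e(n)$ with all $m_e(n)>0$, for $x>0$ real we have $x^n/m_e(n)\le E(x)\le C/h_{\bM}(K/x)$, whence
$$
m_e(n)\ge \frac{x^n h_{\bM}(K/x)}{C}\qquad\text{for every }x>0.
$$
Optimizing over $x$: with $x=K/t$ this is $m_e(n)\ge C^{-1}K^n t^{-n}h_{\bM}(t)$, and taking the supremum over $t>0$ gives $m_e(n)\ge C^{-1}K^n\sup_{t>0}t^{-n}h_{\bM}(t)$. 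By the definition~(\ref{equadefihdeM}) of $h_{\bM}$ as an infimum of $M_\ell t^\ell$, one checks $\sup_{t>0}t^{-n}h_{\bM}(t)$ is comparable to $1/m_{n-1}\cdots$—more directly, $h_{\bM}(t)\ge M_n t^n$ exactly on the interval $[1/m_n,1/m_{n-1})$ fails in general, but on that interval $h_{\bM}(t)=M_n t^n$, so $\sup_{t>0}t^{-n}h_{\bM}(t)\ge M_n$. Therefore $m_e(n)\ge C^{-1}K^n M_n$, i.e. $m_e(n)\ge L^n M_n$ with $L$ chosen to absorb the constant $C^{-1}$ into the geometric factor (shrinking $K$ slightly). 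Combining the two bounds gives equivalence.

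\medskip

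\textbf{Main obstacle.} The delicate point is the upper bound: making the integral $\int_0^\infty t^{p-1}h_{\bM}(k/t)\,dt$ converge and extracting from it a clean bound of the form $H^pM_p$. Convergence at $t\to\infty$ needs $h_{\bM}(k/t)$ to decay faster than $t^{-p}$, which requires using $M_\ell t^\ell$ with $\ell\ge p+1$ in the infimum; convergence at $t\to 0$ is automatic since $h_{\bM}\le1$. Turning the resulting expression (which will involve $M_{p+1}$ or $M_{p+2}$ and factors like $\sum_\ell m_\ell^{-1}$-type tails) back into $H^pM_p$ is exactly where property $(\mu)$ and the quotient estimates~(\ref{equameneequivMene}) enter, and this is the computation we defer to the cited proof of Proposition~5.7 in~\cite{sanz13}; the analogous lower-bound argument via $E$ is comparatively soft. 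One should also note at the outset that items (\textsc{i})--(\textsc{iv}) guarantee $m_e(p)$ is a well-defined positive real number (the integral converges, again by~(\ref{equacotasnucleoe}) at infinity and by (\textsc{ii}) near the origin), so that all the displayed inequalities make sense.
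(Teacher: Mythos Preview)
Your approach is correct and matches the paper's intended proof, which is itself omitted with a reference to Proposition~5.7 of~\cite{sanz13}. Note that the paper actually supplies the two ingredients that would let you avoid all the meandering: inequality~(\ref{equacotasintegrhM}) turns~(\ref{equacotasnucleoe}) directly into the upper bound $m_e(p)\le cCD^pM_p$, and Proposition~\ref{propKomatsu} (Komatsu) applied to $E$ via~(\ref{equacotasE}) yields $1/m_e(n)\le ck^n/M_n$, which is exactly your lower bound.
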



\begin{rema}\label{remamomentsarestronglyregular}
\begin{itemize}
\item[(i)] In the Gevrey case of order $\a>0$, $\bM_{\a}=(p!^{\a})_{p\in\N_0}$, it is usual to choose the kernel
$$
e_{\a}(z)=\frac{1}{\a}z^{1/\a}\exp(-z^{1/\a}),\qquad z\in S_{\a}.
$$
Then we obtain that $m_{e_\a}(\lambda)=\Gamma(1+\a \lambda)$ for $\Re(\lambda)\ge 0$. Of course, the sequences $\bM_{\a}$ and $\mathfrak{m}_{e_\a}=(m_{\a}(p))_{p\in\N_0}$ are equivalent.
\item[(ii)] Indeed, for any kernel $e$ for $\M-$summability one may prove that the sequence of moments $\mathfrak{m}_e=(m_e(p))_{p\in\N_0}$ is also strongly regular: Firstly, up to multiplication by a constant scaling factor, one may always suppose that $m_e(0)=1$. Property $(\alpha_0)$ is a consequence of H\"older's inequality, since for every $p\in\N$ one has
\begin{align*}
m_e(p)^2&=\|t^{p-1}e(t)\|_1^2\le \|(t^{p-2}e(t))^{1/2}\|_2^2 \|(t^{p}e(t))^{1/2}\|_2^2=
m_e(p-1)m_e(p+1)
\end{align*}
(where $\|\cdot\|_1$ and $\|\cdot\|_2$ are the standard $L^1$ and $L^2$ norms).
Condition $(\mu)$ for $\mathfrak{m}_e$ is deduced from the equivalence of $\M$ and $\mathfrak{m}_e$ (Proposition~\ref{mequivm}) and condition $(\mu)$ for $\M$: for every $\ell,p\in\N_0$,
\begin{align*}
m_e(p+\ell)&\le A^{p+\ell}M_{p+\ell}\le A^{p+\ell}B^{p+\ell}M_{p}M_{\ell}\le A^{p+\ell}B^{p+\ell}C^{p}C^{\ell}m_e(p)m_e(\ell),
\end{align*}
for suitable positive constants $A,B,C$. Finally, one can find in the work of H.-J. Petzsche~\cite[Corollary~3.2]{pet} that condition $(\gamma_1)$ remains invariant under equivalence of sequences.\par
Bearing this fact in mind, in Definition~\ref{defikernelMsumm} one could depart not from a strongly regular sequence $\M$, but from a kernel $e$, initially defined and positive in direction $d=0$, whose moment function $m_e(\lambda)$ is supposed to be well-defined for $\lambda\ge 0$, and such that the sequence $\mathfrak{m}_e$ is strongly regular. This allows one to consider the constant $\omega(\mathfrak{m}_e)$, which would equal $\omega(\M)$ according to Proposition~\ref{mequivm} and Remark~\ref{remaomegamonotona}, and also the function $h_{\mathfrak{m}_e}$, in terms of which one may rephrase all the items in Definition~\ref{defikernelMsumm}, specially the estimates in~(\ref{equacotasnucleoe}) and~(\ref{equacotasE}), with exactly the same meaning, according to the relationship between $h_{\mathfrak{m}_e}$ and $h_{\M}$ indicated in~(\ref{equahdeMequi}). This insight will be fruitful in Section~\ref{sectmomentPDE}, when dealing with so called moment-partial differential equations.
\end{itemize}
\end{rema}

The next definition resembles that of functions of exponential growth, playing a fundamental role when dealing with Laplace and Borel transforms in $k-$summability for Gevrey classes.
For convenience, we will say a holomorphic function $f$ in a sector $S$ is {\it continuous at the origin} if $\lim_{z\to 0,\ z\in T}f(z)$ exists for every $T\ll S$.

\begin{defi}\label{defiMgrowth}
Let $\bM=(M_{p})_{p\in\N_0}$ be a sequence of positive real numbers verifying ($\a_0$) and ($\gamma_1$), and consider an unbounded sector $S$ in $\mathcal{R}$.
The set $\mathcal{O}^{\bM}(S)$ consists of the holomorphic functions $f$ in $S$, continuous at 0 and having $\M-$growth in $S$, i.e. such that for every unbounded proper subsector $T$ of $S$ there exist $r,c,k>0$ such that for every $z\in T$ with $|z|\ge r$ one has
\begin{equation}\label{equagrowthhM}
|f(z)|\le\frac{c}{h_{\bM}(k/|z|)}.
\end{equation}
\end{defi}
\begin{rema}
Since continuity at 0 has been asked for, $f\in\mathcal{O}^{\bM}(S)$ implies that for every $T\prec S$ there exist $c,k>0$ such that for every $z\in T$ one has (\ref{equagrowthhM}).
\end{rema}

\noindent We are ready for the introduction of the $\bM-$Laplace transform.

Given a kernel $e$ for $\M-$summability, a sector $S=S(d,\alpha)$ and $f\in\mathcal{O}^{\bM}(S)$, for any direction $\tau$ in $S$ we define the operator $T_{e,\tau}$  sending $f$ to its \textit{$e$-Laplace transform in direction $\tau$}, defined as
\begin{equation}\label{equadefitranLapl}
(T_{e,\tau}f)(z):=\int_0^{\infty(\tau)}e(u/z)f(u)\frac{du}{u},\quad
|\arg(z)-\tau|<\o(\M)\pi/2,\ |z|\textrm{ small enough},
\end{equation}
where the integral is taken along the half-line parameterized by $t\in(0,\infty)\mapsto te^{i\tau}$.
We have the following result.

\begin{prop}\label{prophollap}
For a sector $S=S(d,\alpha)$ and $f\in\mathcal{O}^{\bM}(S)$, the family $\{T_{e,\tau}f\}_{\tau\textrm{\,in\,}S}$ defines a holomorphic function $T_{e}f$ in a
sectorial region $G(d,\a+\o(\M))$.
\end{prop}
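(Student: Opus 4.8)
The plan is to verify, in a routine but careful way, two things: first, that for each direction $\tau$ in $S$ the integral defining $(T_{e,\tau}f)(z)$ converges and is holomorphic in $z$ on the sector $|\arg(z)-\tau|<\omega(\M)\pi/2$ (with $|z|$ small), and second, that the functions $T_{e,\tau}f$ corresponding to different directions $\tau,\tau'$ in $S$ glue together to a single holomorphic function on a sectorial region of opening $\pi(\alpha+\omega(\M))$ bisected by $d$. For the convergence, I would fix $\tau$ in $S$, pick a slightly larger proper subsector $T\prec S$ containing the ray of argument $\tau$, and use that $f\in\mathcal{O}^{\bM}(S)$ gives $|f(u)|\le c/h_{\bM}(k/|u|)$ on $T$ (valid for all $u\in T$ by the Remark following Definition \ref{defiMgrowth}, since $f$ is continuous at $0$), i.e. $|f(u)|\le c\,e^{M(|u|/k)}$. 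On the other hand, for $z$ with $|\arg(z)-\tau|<\omega(\M)\pi/2-\varepsilon$ the point $u/z$ stays in $S_{\omega(\M)-\varepsilon'}$ as $u$ runs along the ray $\arg(u)=\tau$, so the estimate (\ref{equacotasnucleoe}) gives $|e(u/z)|\le c'\,e^{-M(|u|/(k'|z|))}$. Writing $t=|u|$, the integrand is bounded (up to constants) by $t^{-1}e^{M(t/k)-M(t/(k'|z|))}$; choosing $|z|$ small enough that $k'|z|<k$ and using that $M$ is increasing makes the exponent negative for large $t$, while near $t=0$ the local uniform integrability of $z^{-1}e(z)$ at the origin, Definition \ref{defikernelMsumm}(\textsc{ii}), controls the integral. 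Holomorphy in $z$ then follows from Leibniz's rule / Morera's theorem together with these locally uniform bounds, which also permit differentiation under the integral sign.

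Next I would show path-independence of the defining integral within $S$, i.e. that if $\tau,\tau'$ are two directions in $S$ with $|\tau-\tau'|$ small then $(T_{e,\tau}f)(z)=(T_{e,\tau'}f)(z)$ wherever both are defined. This is a Cauchy-theorem argument: $e(u/z)f(u)/u$ is holomorphic in $u$ on the sector between the two rays, and one closes the contour with a small arc near $0$ and a large arc at radius $R$; the small arc contributes $o(1)$ by the integrability at the origin, and the large arc contributes $o(1)$ because on it the product of the bounds $e^{M(R/k)}$ (from $f$) and $e^{-M(R/(k'|z|))}$ (from $e$, valid since $u/z$ stays in a proper subsector of $S_{\omega(\M)}$) tends to $0$ as $R\to\infty$, again provided $|z|$ is small. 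Consequently the functions $\{T_{e,\tau}f\}$ are restrictions of one holomorphic function $T_ef$ defined on the union, over all $\tau$ in $S$, of the sectors $\{|\arg(z)-\tau|<\omega(\M)\pi/2,\ |z|<\rho(\tau)\}$. Since $\tau$ ranges over an interval of length $\alpha\pi$ (minus endpoints) and each associated $z$-sector has opening $\omega(\M)\pi$, this union contains, for every $\beta<\alpha+\omega(\M)$, a truncated sector $S(d,\beta,\rho(\beta))$; hence it is a sectorial region $G(d,\alpha+\omega(\M))$ as claimed.

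The main obstacle I anticipate is bookkeeping the interplay between the parameter $\varepsilon$ (the angular loss needed for (\ref{equacotasnucleoe}) to apply) and the smallness of $|z|$ (needed to beat the $\M$-growth of $f$ via $M(t/k)-M(t/(k'|z|))\to-\infty$): one must check that, for each $\beta<\alpha+\omega(\M)$, the radius $\rho$ can be chosen uniformly on the truncated sector $S(d,\beta)$, and that the various constants $c,k$ from the kernel estimates and from the definition of $\mathcal{O}^{\bM}(S)$ can be matched on a common proper subsector $T\prec S$. Once the estimates are arranged so that the exponent in the integrand is eventually negative and dominated near $0$ by the integrability hypothesis (\textsc{ii}), convergence, holomorphy and the contour-deformation step are all straightforward. (This is precisely the generalization of the classical fact that the Laplace transform of a function of exponential order $\le k$ is holomorphic on a half-plane; here the half-plane is replaced by a sector of opening $\omega(\M)\pi$ because the kernel $e$ decays like $e^{-M(|z|/k)}$ rather than $e^{-|z|/k}$.)
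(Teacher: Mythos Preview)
Your plan follows the paper's proof closely: split the integral at some finite $a>0$, control the piece near $0$ via the local uniform integrability condition~(\textsc{ii}), control the tail by playing the decay of $e$ against the $\M$-growth of $f$, and glue the $T_{e,\tau}f$ by a Cauchy-theorem argument on a closed contour. There is, however, a genuine gap in your tail estimate. You bound the integrand by $t^{-1}e^{M(t/k)-M(t/(k'|z|))}$ and claim that choosing $k'|z|<k$ and using monotonicity of $M$ ``makes the exponent negative for large $t$''. But a merely negative exponent leaves you with a bound of order $t^{-1}$, which is \emph{not} integrable on $[a,\infty)$; and in your contour-deformation step, you need the stronger fact that $M(R/(k'|z|))-M(R/k)\to+\infty$ for the large arc to vanish, which again does not follow from monotonicity alone.

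What is missing is precisely the moderate-growth estimate~(\ref{e120}). The paper applies it with $s=2$: from $h_{\M}(k_2|z|/|u|)\le \big(h_{\M}(\rho(2)k_2|z|/|u|)\big)^2$ one gets, for $|z|<k_1/(\rho(2)k_2)$,
\[
\frac{h_{\M}(k_2|z|/|u|)}{h_{\M}(k_1/|u|)}\le h_{\M}(k_1/|u|)\le \frac{M_1k_1}{|u|},
\]
so the integrand is $O(|u|^{-2})$ on $[a,\infty)$, giving both convergence and local uniform bounds for Leibniz's rule. The same inequality makes the large-arc contribution tend to $0$ in the Cauchy step. Once you insert~(\ref{e120}) at these two points, your argument coincides with the paper's; but as written, the appeal to monotonicity of $M$ is insufficient and is the one place where the argument would actually fail.
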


\begin{proof1}
Let $\tau\in\R$ be a direction in $S$, i.e., such that $|\tau-d|<\a\pi/2$. We will show that for every $\beta$ with $0<\beta<\o(\M)$, there exists $r=r(f,\tau,\beta)>0$ such that $T_{e,\tau}f$ is holomorphic in $S(\tau,\beta,r)$. Hence,
$T_{e,\tau}f$ will be holomorphic in $G_{\tau}:=\cup_{0<\beta<\o(\M)}S(\tau,\beta,r)$, which is a sectorial region $G_{\tau}=G(\tau,\o(\M))$.\par
For every $u,z\in\mathcal{R}$ with $\mathrm{arg}(u)=\tau$ and $|\mathrm{arg}(z)-\tau|<\o(\M)\pi/2$ we have that $u/z\in S_{\o(\M)}$, so that the expression under the integral sign in (\ref{equadefitranLapl}) makes sense.
We fix $a>0$, and write
$$
\int_0^{\infty(\tau)}e(u/z)f(u)\frac{du}{u}=\int_0^{ae^{i\tau}}e(u/z)f(u)\frac{du}{u}+\int_{ae^{i\tau}}^{\infty(\tau)}e(u/z)f(u)\frac{du}{u}.
$$
Since $f$ is continuous at the origin, and because of Definition~\ref{defikernelMsumm}(\textsc{ii}), it is straightforward to apply Leibnitz's rule for parametric integrals and deduce that
the first integral in the right-hand side defines a holomorphic function in $S(\tau,\o(\M))$. Regarding the second integral, for $u$ as before and by Definition~\ref{defiMgrowth} there exist $c_1,k_1>0$ such that
$$
|f(u)|\le c_1\big(h_{\M}(k_1/|u|)\big)^{-1}.
$$
Also, for $z$ such that $|\mathrm{arg}(z)-\tau|<\b\pi/2$ we have that $u/z\in S_{\b}$, and the property \ref{defikernelMsumm}(\textsc{iii}) provides us with constants $c_2,k_2>0$ such that
$$
|e(u/z)|\le c_2h_{\M}(k_2|z|/|u|),
$$
so that
$$
\big|\frac{1}{u}e(u/z)f(u)\big|\le \frac{c_1c_2}{|u|}\frac{h_{\bM}(k_2|z|/|u|)}{h_{\bM}(k_1/|u|)}.
$$
Let $\rho(2)>0$ be the constant appearing in (\ref{e120}) for $s=2$, and consider $r:=k_1/(\rho(2)k_2)>0$. For any $z\in S(\tau,\beta,r)$ we have that $\rho(2)k_2|z|<k_1$, and from (\ref{e120}) and the monotonicity of $h_{\M}$ we deduce that
\begin{equation*}
\big|\frac{1}{u}e(u/z)f(u)\big|\le \frac{c_1c_2}{|u|}\frac{h_{\bM}^2(\rho(2)k_2|z|/|u|)}{h_{\bM}(k_1/|u|)}\le \frac{c_1c_2}{|u|}h_{\bM}(k_1/|u|).
\end{equation*}
By the very definition of $h_{\M}$ we have that $h_{\M}(k_1/|u|)\le M_1k_1/|u|$, so the right-hand side of the last inequality is an integrable function of $|u|$ in $(a,\infty)$, and again Leibnitz's rule allows us to conclude the desired analyticity for the second integral.\par
Let $\sigma\in\R$ with $|\sigma-d|<\alpha\pi/2$. The map $T_{e,\sigma}f$ is a holomorphic function in a sectorial region $G_{\sigma}=G(\sigma,\o(\M))$ which will overlap with $G_{\tau}$ whenever $\tau$ and $\sigma$  are close enough. Since we know that
\begin{equation*}
\lim_{t\to\infty}th_{\bM}(1/t)=0,
\end{equation*}
by Cauchy's residue theorem we easily deduce that $T_{e,\tau}f(z)\equiv T_{e,\sigma}f(z)$ whenever both maps are defined. Thus the family $\{T_{e,\tau}f\}_{\tau\textrm{\,in\,}S}$ defines a holomorphic function $T_{e}f$ in the union of the sectorial regions $G_{\tau}$, which is indeed again a sectorial region $G=G(d,\a+\o(\M))$.
\end{proof1}

We now define the generalized Borel transforms.\par

Suppose $\o(\M)<2$, and let $G=G(d,\a)$ be a sectorial region with $\a>\o(\M)$, and $f:G\to \C$ be holomorphic in $G$ and continuous at 0.
For $\tau\in\R$ such that $|\tau-d|<(\a-\o(\M))\pi/2$ we may consider a path
$\delta_{\o(\M)}(\tau)$ in $G$ like the ones used in the classical Borel transform, consisting of a segment from the origin to a point $z_0$ with $\arg(z_0)=\tau+\o(\M)(\pi+\varepsilon)/2$ (for some
suitably small $\varepsilon\in(0,\pi)$), then the circular arc $|z|=|z_0|$ from $z_0$ to
the point $z_1$ on the ray $\arg(z)=\tau-\o(\M)(\pi+\varepsilon)/2$ (traversed clockwise), and
finally the segment from $z_1$ to the origin.

Given kernels $e,E$ for $\M-$summability, we define the operator $T^{-}_{e,\tau}$ sending $f$ to its \textit{$e$-Borel transform in direction $\tau$}, defined as
$$
(T^{-}_{e,\tau}f)(u):=\frac{-1}{2\pi i}\int_{\delta_{\o(\M)}(\tau)}E(u/z)f(z)\frac{dz}{z},\quad
u\in S(\tau,\varepsilon_0), \quad \varepsilon_0\textrm{ small enough}.
$$

In case $\o(\M)\ge 2$, choose $s>0$ and a kernel $\tilde{e}$ for $\M^{(1/s)}-$summability as in Remark~\ref{remadefinucleos}(iii), and let $T^{-}_{\tilde{e},\tau}$ be defined as before, where the kernel under the integral sign is the function $\tilde{E}$ given in (\ref{equaEtildeomegamayor2}). Then, if $\phi_s$ is the operator sending a function $f$ to the function $f(z^s)$, we define $T^{-}_{e,\tau}$
by the identity
\begin{equation}\label{equadefiBoreltransomegamayor2}
\phi_s\circ T^{-}_{e,\tau}=T^{-}_{\tilde{e},\tau}\circ \phi_s,
\end{equation}
in the same way as in~\cite[p.\ 90]{balserutx}.

\begin{prop}\label{propholBor}
For $G=G(d,\a)$ and $f:G\to \C$ as above, the family
$$
\{T^{-}_{e,\tau}f\}_{\tau},
$$
where $\tau$ is a real number such that $|\tau-d|<(\a-\o(\M))\pi/2$, defines a holomorphic function $T^{-}_{e}f$ in the sector $S=S(d,\a-\o(\M))$. Moreover, $T^{-}_{e}f$ is of $\M$-growth in $S$.
\end{prop}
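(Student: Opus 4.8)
The plan is to prove the statement when $\o:=\o(\M)<2$ and to deduce the case $\o(\M)\ge 2$ from it via the ramification of Remark~\ref{remadefinucleos}(iii): writing $e(z)=\tilde e(z^{1/s})/s$ with $\o(\M)/s<2$ and using the defining identity~(\ref{equadefiBoreltransomegamayor2}), holomorphy on $S(d/s,(\a-\o(\M))/s)$ together with $\M^{(1/s)}$-growth of $T^{-}_{\tilde e}(\phi_s f)$ transfer to holomorphy on $S(d,\a-\o(\M))$ together with $\M$-growth of $T^{-}_e f$, by means of~(\ref{equahMpotencia}) and $\o(\M^{(1/s)})=\o(\M)/s$ (recall $s>1$ here). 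So assume $\o<2$ and fix a direction $\tau$ with $|\tau-d|<(\a-\o)\pi/2$. Since $\a>\o$ there is angular slack, so one may choose $\varepsilon\in(0,\pi)$ and then, because $G$ is a sectorial region, a radius $|z_0|=|z_1|$ small enough, so that the whole path $\delta_\o(\tau)$ lies in $G$ (it suffices that $|\tau-d|+\o(\pi+\varepsilon)/2<\b\pi/2$ for some $\b<\a$ with $S(d,\b,\rho(\b))\subset G$, and then $|z_0|<\rho(\b)$). Split $(T^{-}_{e,\tau}f)(u)$ as the sum of the two rectilinear segments and the circular arc. The arc is a compact set avoiding the origin and $E$ is entire, so the arc piece is an entire function of $u$.

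For the two segments, write $z=te^{i\theta_\pm}$ with $\theta_\pm=\tau\pm\o(\pi+\varepsilon)/2$ (so $dz/z=dt/t$) and bound $|f(z)|$ by a constant $C_f$ along the path, which is contained in some $T\ll G$ and on which $f$ extends continuously. The argument of $E(u/z)=E\big((|u|/t)e^{i(\arg u-\theta_\pm)}\big)$ is $\arg u-\theta_\pm$, and a short computation shows that, for $u$ in a sufficiently narrow sector $S(\tau,\varepsilon_0)$, it lies modulo $2\pi$ in the interval $(\o\pi/2,\,2\pi-\o\pi/2)$ — the strict inequalities $\varepsilon>0$ and $\o<2$ are exactly what makes this possible. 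Hence $e^{i(\arg u-\theta_\pm)}\in S(\pi,2-\o)$, so Definition~\ref{defikernelMsumm}(\textsc{vi}) applies: the substitution $t=|u|t'$ turns each segment integral into $C_f\int_0^{|z_0|/|u|}(t')^{-1}\big|E\big(e^{i(\arg u-\theta_\pm)}/t'\big)\big|\,dt'$, which is finite — the part over $(0,t_0)$ by~(\textsc{vi}), the remaining bounded part because $E$ is bounded on compact sets — and locally uniformly bounded in $u$ thanks to the supremum over a neighborhood in~(\textsc{vi}). By Leibnitz's rule the segment pieces, hence $T^{-}_{e,\tau}f$, are holomorphic on the whole infinite sector $S(\tau,\varepsilon_0)$.

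Next comes the glueing. If $\tau,\tau'$ are admissible directions close enough that $S(\tau,\varepsilon_0)\cap S(\tau',\varepsilon_0')\neq\emptyset$, then for $u$ in this intersection the two integrals agree: since $z\mapsto E(u/z)f(z)/z$ is holomorphic on $G\setminus\{0\}$, Cauchy's integral theorem shows the two path integrals differ only by contributions along connecting arcs near the origin and at radius $|z_0|$; the former tend to $0$ as the inner radius shrinks, because there $\arg(u/z)\approx-\o(\pi+\varepsilon)/2$ is a decay direction of $E$ and the angular length stays bounded, and the latter cancel when the common radius $|z_0|$ is used. The same argument shows independence of the auxiliary parameters $\varepsilon$ and $|z_0|$. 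Therefore $\{T^{-}_{e,\tau}f\}_\tau$ glues into one holomorphic function $T^{-}_e f$ on $\bigcup_\tau S(\tau,\varepsilon_0^{(\tau)})$; taking $\varepsilon$ as large as $G$ permits for each $\tau$ and letting $\tau$ run over $(d-(\a-\o)\pi/2,\,d+(\a-\o)\pi/2)$, one checks that this union is the sector $S(d,\a-\o)$.

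Finally, the $\M$-growth: fix $T\prec S(d,\a-\o)$, pick $\tau$ and $\varepsilon$ adapted to $T$, and let $u\in T$ with $|u|$ large. The segment integrals stay bounded in $u$ (in fact they tend to $0$ as $|u|\to\infty$, since $|z_0|/|u|\to0$ and the integrand is integrable at the origin), while on the arc $|z|=|z_0|$ is constant, so $|u/z|=|u|/|z_0|$ and~(\ref{equacotasE}) gives $|E(u/z)|\le C/h_{\M}\big(K|z_0|/|u|\big)$; as the arc has length $\le\pi|z_0|$ and $1/h_{\M}(K|z_0|/|u|)\to\infty$, one obtains $|(T^{-}_e f)(u)|\le c/h_{\M}(k/|u|)$ for $|u|$ large, i.e.\ $T^{-}_e f$ has $\M$-growth in $S$. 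The main obstacle is the directional bookkeeping of the second paragraph: verifying that when $u$ ranges over a genuine sector about $\tau$, the points $u/z$ for $z\in\delta_\o(\tau)$ fall precisely in the band of directions adjacent to the edges $\pm\o\pi/2$ of the growth sector of $E$, where Definition~\ref{defikernelMsumm}(\textsc{vi}) supplies uniform integrability, and then assembling these local sectors into holomorphy on all of $S(d,\a-\o)$.
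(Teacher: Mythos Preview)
Your proof is correct and follows essentially the same approach as the paper's: reduction to $\o(\M)<2$ via~(\ref{equadefiBoreltransomegamayor2}), local holomorphy on a narrow sector about each $\tau$ using condition~(\textsc{vi}) of Definition~\ref{defikernelMsumm} for the two rectilinear pieces, glueing by Cauchy's theorem, and then the $\M$-growth estimate obtained by the same splitting (segments tending to~$0$ after the change of variable $t=|u|s$, arc controlled by~(\ref{equacotasE})). Your write-up is in fact more explicit than the paper's in the directional bookkeeping, in particular the verification that $u/z\in S(\pi,2-\o(\M))$ precisely when $u\in S(\tau,\varepsilon\o(\M)/\pi)$, which the paper states without computation.
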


\begin{proof1}
Due to the identity (\ref{equadefiBoreltransomegamayor2}), it is clearly sufficient to prove our claim in the case $\o(\M)<2$.
Since $f$ is holomorphic in $G$ and continuous at 0, for every $\tau\in\R$ such that $|\tau-d|<(\a-\o(\M))\pi/2$ the condition in Definition \ref{defikernelMsumm}(\textsc{vi}) implies that $T^{-}_{e,\tau}f$ is holomorphic
in the sector $S(\tau,\varepsilon\omega(\M)/\pi)$, where $\varepsilon>0$ is the one entering in the definition of $\delta_{\o(\M)}(\tau)$, and it is small enough so that $u/z$ stays in the sector $S(\pi,2-\omega(\M))$ as $u\in S(\tau,\varepsilon\omega(\M)/\pi)$ and $z$ runs over the two segments in $\delta_{\o(\M)}(\tau)$. Cauchy's theorem easily implies that the family $\{T^{-}_{e,\tau}f\}$, when $\tau\in\R$ and $|\tau-d|<(\a-\o(\M))\pi/2$, defines a holomorphic function $T^{-}_{e}f$ in the sector $S=S(d,\a-\o(\M))$. Let us finally check that $T^{-}_{e}f$ is of $\M$-growth in $S$. By compactness, it suffices to work on a proper unbounded subsector $T$ of $S(\tau,\varepsilon\omega(\M)/\pi)$, for $\tau$ and $\varepsilon$ as before. Put $\delta_{\o(\M)}(\tau)=\delta_1+\delta_2+\delta_3$, where $\delta_1$ and $\delta_3$ are the aforementioned segments in directions, say, $\theta_1$ and $\theta_3$, and $\delta_2$ is the circular arc with radius $r_2>0$.
As $f$ is continuous at the origin, there exists $M>0$ such that $|f(z)|\le M$ for every $z$ in the trace of
$\delta_{\o(\M)}(\tau)$. So, for every $u\in T$ and $j=1,3$ we have
\begin{align*}
\Big|\frac{-1}{2\pi i}\int_{\delta_j}E(u/z)f(z)\frac{dz}{z}\Big|
&\le\frac{M}{2\pi}\int_0^{r_2}\frac{1}{t}|E(ue^{-i\theta_j}/t)|dt=
\frac{M}{2\pi}\int_0^{r_2/|u|}\frac{1}{s}|E(e^{i(\arg(u)-\theta_j)}/s)|ds,
\end{align*}
after the change of variable $t=|u|s$.
According to the condition in Definition \ref{defikernelMsumm}(\textsc{vi}), these expressions uniformly tend to 0 as $u$ tends to infinity in $T$. On the other hand, the estimates for $E$ in (\ref{equacotasE}) allow us to write
\begin{align*}
\Big|\frac{-1}{2\pi i}\int_{\delta_2}E(u/z)f(z)\frac{dz}{z}\Big|
&\le\frac{M(\theta_1-\theta_3)}{2\pi}\max_{|z|=r_2}|E(u/z)|
\le \frac{CM(\theta_1-\theta_3)}{2\pi h_{\M}(Kr_2/|u|)}.
\end{align*}
So, this integral has the desired growth and the conclusion follows.
\end{proof1}

In the next paragraph we follow the same ideas in \cite[p.\ 87-88]{balserutx} in order to justify the forthcoming definition of formal Laplace and Borel transforms.\par
Let $\bM=(M_{p})_{p\in\N_0}$ be a strongly regular sequence, $e$ a kernel for $\M-$summability and $S=S_\alpha$. It is clear that for every $\lambda\in\C$ with $\Re(\lambda)\ge0$, the function $f_{\lambda}(z)=z^{\lambda}$ belongs to the space $\mathcal{O}^{\bM}(S)$. From Proposition~\ref{prophollap}, one can define $T_ef_{\lambda}(z)$ for every $z$ in an appropriate sectorial region $G$. Moreover, for $z\in G$ and an adequate choice of $\tau\in\R$ one has
$$T_ef_{\lambda}(z)= \int_{0}^{\infty(\tau)}e\left(\frac{u}{z}\right)u^{\lambda-1}du.$$
In particular, for $z\in\mathcal{R}$ with $\hbox{arg}(z)=\tau$, the change of variable $u/z=t$ turns the preceding integral into
\begin{equation}\label{equaLapltransmonom}
T_ef_{\lambda}(z)=\int_{0}^{\infty}e(t)z^{\lambda-1}t^{\lambda-1}zdt= m_e(\lambda)z^{\lambda}.
\end{equation}

Next, we recall the following result by H. Komatsu, which was useful in the proof of Proposition~\ref{mequivm}.

\begin{prop}[\cite{komatsu},\ Proposition\ 4.5]\label{propKomatsu}
Let $M(r)$ be the function associated with a sequence $\M$ verifying $(\a_0)$. Given an entire function $F(z)=\sum_{n=0}^\infty a_nz^n$, $z\in\C$, the following statements are equivalent:
\begin{itemize}
\item[(i)] $F$ is of $\M-$growth.
\item[(ii)] There exist $c,k>0$ such that for every $n\in\N_0$, $|a_n|\le ck^n/M_n$.
\end{itemize}
\end{prop}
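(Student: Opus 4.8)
The plan is to prove the two implications separately, the bridge between coefficient estimates and growth estimates being the two-sided relation between $\M$ and its associated function $M$; recall that for an entire function $F$, being of $\M$-growth means (compare Definition~\ref{defiMgrowth} and~(\ref{equacotasE})) that there are $c,k>0$ with $|F(z)|\le c/h_{\bM}(k/|z|)=c\,e^{M(|z|/k)}$ for all $z\in\C$. I would first isolate two elementary facts. From the very definition~(\ref{equadefiMdet}), $e^{M(t)}=\sup_{p\ge0}t^p/M_p$, so
\begin{equation*}
\frac{1}{M_p}\le\frac{e^{M(t)}}{t^p}\qquad\text{for all }p\ge0,\ t>0.
\end{equation*}
Moreover, hypothesis $(\a_0)$ makes the quotients $m_p=M_{p+1}/M_p$ non-decreasing, and a direct check shows that, for every $n\ge1$ and every $r$ in the nonempty interval $[m_{n-1},m_n]$, one has $r^{p-n}/M_p\le1/M_n$ for all $p\ge0$; hence
\begin{equation*}
\inf_{r>0}\frac{e^{M(r)}}{r^{n}}=\inf_{r>0}\sup_{p\ge0}\frac{r^{p-n}}{M_p}=\frac{1}{M_n},
\end{equation*}
the infimum being attained at any such $r$ (and for $n=0$ the value $1/M_0=1$ is attained as $r\to0^{+}$).

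For $(ii)\Rightarrow(i)$, from $|a_n|\le ck^n/M_n$ I would estimate, for $z\in\C\setminus\{0\}$,
\begin{equation*}
|F(z)|\le\sum_{n\ge0}|a_n|\,|z|^n\le c\sum_{n\ge0}\frac{(k|z|)^n}{M_n}\le c\,e^{M(2k|z|)}\sum_{n\ge0}2^{-n}=2c\,e^{M(2k|z|)},
\end{equation*}
applying the first fact with $t=2k|z|$; since $e^{M(2k|z|)}=1/h_{\bM}\bigl(1/(2k|z|)\bigr)$ this is exactly an estimate of $\M$-growth (with new constants $2c$ and $1/(2k)$), and $z=0$ is trivial.

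For $(i)\Rightarrow(ii)$, from $|F(z)|\le c\,e^{M(|z|/k)}$ on $\C$, Cauchy's inequalities on the circle $|z|=r$ give $|a_n|\le r^{-n}\max_{|z|=r}|F(z)|\le c\,r^{-n}e^{M(r/k)}$ for every $r>0$. For $n\ge1$ I would take $r=k\,r_n$ with $r_n\in[m_{n-1},m_n]$, so that the infimum identity above gives $r^{-n}e^{M(r/k)}=k^{-n}r_n^{-n}e^{M(r_n)}=k^{-n}/M_n$; hence $|a_n|\le c\,k^{-n}/M_n$, and since $|a_0|=|F(0)|\le c\,e^{M(0)}=c=c/M_0$, condition (ii) holds with constants $c$ and $1/k$.

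The argument is essentially bookkeeping with Cauchy estimates and a geometric series, and the only step requiring a genuine verification is the biduality identity $\inf_{r>0}r^{-n}e^{M(r)}=1/M_n$ with the infimum attained: this is exactly where logarithmic convexity $(\a_0)$ enters, forcing $m_p$ to be non-decreasing so that for $r\in[m_{n-1},m_n]$ the supremum defining $e^{M(r)}$ is realised at the index $p=n$. If one only wanted the equivalence of the two conditions up to constants, a crude upper bound for $\inf_{r>0}r^{-n}e^{M(r)}$ would suffice and the optimal choice of $r$ could be dispensed with; I prefer the exact identity since it is both available and cleaner. Everything else — local boundedness of $F$ near $0$, convergence of $\sum_n 2^{-n}$, the elementary identity $h_{\bM}(1/t)=e^{-M(t)}$ — is routine.
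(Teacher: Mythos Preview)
Your argument is correct. Note, however, that the paper does not actually prove this proposition: it is quoted verbatim as Proposition~4.5 of Komatsu~\cite{komatsu} and used as a black box, so there is no in-paper proof to compare your approach with. What you have written is a clean, self-contained proof of the cited result; the two elementary facts you isolate, $t^{p}/M_{p}\le e^{M(t)}$ and the biduality identity $\inf_{r>0}r^{-n}e^{M(r)}=1/M_{n}$ (valid under $(\alpha_0)$ because $m_p$ is non-decreasing, so the supremum defining $e^{M(r)}$ is attained at $p=n$ for $r\in[m_{n-1},m_n]$), are exactly the standard ingredients in Komatsu's treatment, and the Cauchy estimates plus geometric-series trick is the expected route.
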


Taking this characterization into account, one may justify termwise integration to obtain that, for such a function $F$,
$$
T_eF(z)=\sum_{n=0}^\infty a_nm_e(n)z^n
$$
whenever  $|z|$ is small enough. In case $\o(\M)<2$, and particularizing this result for $F=E$ (the kernel function corresponding to $e$), we deduce that for every $z\neq 0$ and $w\neq 0$ such that $|z/w|<1$ one has
\begin{equation}\label{equaLapltransE}
\frac{w}{w-z}=\int_0^{\infty(\tau)}e(u/z)E(u/w)\frac{du}{u},
\end{equation}
a formula which remains valid as long as both sides are defined.
Suppose now that $f$ is holomorphic in a sectorial region $G(d,\a)$, with $\a>\omega(\M)$, and continuous at the origin. By Propositions \ref{propholBor} and \ref{prophollap}, $T_eT_e^{-}f$ is well defined; a change in the order of integration and the use of (\ref{equaLapltransE}) prove that
\begin{equation}\label{equaLaplBorelident}
T_eT_e^{-}f=f.
\end{equation}
Finally, since $f_{\lambda}$ (defined above) is continuous at the origin, it makes sense to compute
$$
T^{-}_ef_{\lambda}(u)=\frac{-1}{2\pi i}\int_{\delta_{\o(\M)}(\tau)}E(u/z)z^{\lambda-1}dz.
$$
Putting $u/z=t$, the integral is changed into
$$
T^{-}_ef_{\lambda}(u)=\frac{u^{\lambda}}{2\pi i}\int_{\gamma_z}E(t)t^{-\lambda-1}dt,
$$
for a corresponding path $\gamma_z$. However, Cauchy's theorem allows one to choose one and the same path of integration as long as $z$ runs in a suitably small disk, and we deduce, by the identity principle, that $T^{-}_ef_{\lambda}(u)$ is a constant multiple of $u^{\lambda}$ in $S(d,\a-\omega(\M))$. According to (\ref{equaLapltransmonom}) and (\ref{equaLaplBorelident}), we conclude that
\begin{equation}\label{equaBoretransmonom}
T^{-}_ef_{\lambda}(u)=\frac{u^{\lambda}}{m_e(\lambda)}.
\end{equation}
Observe that, taking into account (\ref{equadefiBoreltransomegamayor2}), the same will be true if $\o(\M)\ge 2$. Therefore, it is adequate to make the following definitions.

\begin{defi}
Given a strongly regular sequence $\bM$ and a kernel for $\M-$summability $e$, the formal $e-$Laplace transform $\hat{T}_{e}:\C[[z]]\to\C[[z]]$ is given by
$$\hat{T}_{e}\big(\sum_{p=0}^{\infty}a_{p}z^{p}\big):=\sum_{p=0}^{\infty}m_e(p)a_{p}z^{p},\qquad \sum_{p=0}^{\infty}a_{p}z^{p}\in\C[[z]].$$
Accordingly, we define the formal $e-$Borel transform $\hat{T}_e^{-}:\C[[z]]\to\C[[z]]$ by
$$\hat{T}_e^{-}\big(\sum_{p=0}^{\infty}a_{p}z^{p}\big):=\sum_{p=0}^{\infty}\frac{a_{p}}{m_e(p)}z^{p},\qquad \sum_{p=0}^{\infty}a_{p}z^{p}\in\C[[z]].$$
\end{defi}
The operators $\hat{T}_e$ and $\hat{T}_e^{-}$ are inverse to each other.

%

The next result lets us know how these analytic and formal transforms interact with general asymptotic expansions. Given two sequences of positive real numbers $\M=(M_n)_{n\in\N_0}$ and $\M'=(M'_n)_{n\in\N_0}$, we consider the sequences $\M\cdot\M'=(M_nM'_n)_{n\in\N_0}$ and $\M'/\M=(M'_n/M_n)_{n\in\N_0}$. We note that if both sequences are strongly regular, $\M\cdot\M'$ is again strongly regular (see Proposition~\ref{propprodfuertregu}), but $\M'/\M$ might not be.

\begin{rema}
One may easily prove (see Remark 5.8 in \cite{sanz13}) that given $K>0$, there exist $C,D>0$ such that for every $p\in\N$ one has
\begin{equation}\label{equacotasintegrhM}
\int_{0}^{\infty}t^{p-1}h_{\bM}(K/t)dt\le CD^pM_p.
\end{equation}
\end{rema}

\begin{theo}\label{teorrelacdesartransfBL}
Suppose $\M$ is strongly regular and $e$ is a kernel for $\M-$summability. For any sequence $\M'$ of positive real numbers the following hold:
\begin{itemize}
\item[(i)] If $f\in\mathcal{O}^{\bM}(S(d,\a))$
and $f\sim_{\M'}\hat{f}$, then $T_ef\sim_{\M\cdot\M'}\hat{T}_e\hat{f}$ in a
sectorial region $G(d,\a+\o(\M))$.
\item[(ii)] If $f\sim_{\M'}\hat{f}$ in a sectorial region $G(d,\a)$ with $\a>\o(\M)$, then
    $T^{-}_ef\sim_{\M'/\M}{\hat{T}}^{-}_e\hat{f}$ in the sector $S(d,\a-\o(\M))$.
\end{itemize}
\end{theo}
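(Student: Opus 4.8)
The plan is to prove both assertions by direct estimation of the remainders of the asymptotic series, using the behaviour of the transforms on monomials, formulas (\ref{equaLapltransmonom}) and (\ref{equaBoretransmonom}), together with the integral estimate (\ref{equacotasintegrhM}) and its analogue for $E$. For part (i), fix $T\ll G(d,\a+\o(\M))$; then $T$ lies in a sectorial region $G_\tau=G(\tau,\o(\M))$ for some admissible direction $\tau$ in $S(d,\a)$, and on the corresponding subsector one writes, for $z$ with small modulus,
\begin{equation*}
T_ef(z)-\sum_{p=0}^{n-1}m_e(p)a_pz^p=\int_0^{\infty(\tau)}e(u/z)\Big(f(u)-\sum_{p=0}^{n-1}a_pu^p\Big)\frac{du}{u}+\sum_{p=0}^{n-1}a_p\Big(\int_0^{\infty(\tau)}e(u/z)u^{p}\frac{du}{u}-m_e(p)z^p\Big),
\end{equation*}
and the second sum vanishes by (\ref{equaLapltransmonom}) once the contour is rotated to $\arg(z)=\tau$, after which analyticity and the identity principle transfer the estimate to all of $T$. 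The first integral is split at $u=a$: on $(0,a)$ one uses that $f$ is bounded by $C_TA_T^n M'_n|u|^n$ from $f\sim_{\M'}\hat f$ together with the bound $|e(u/z)|\le c\,h_{\bM}(k|z|/|u|)$ from \ref{defikernelMsumm}(\textsc{iii}); the change of variable $u=|z|t$ and the integral estimate (\ref{equacotasintegrhM}) then produce a bound of the form $C D^n M_nM'_n|z|^n$. On $(a,\infty)$ one combines the $\M$-growth of $f$ with \ref{defikernelMsumm}(\textsc{iii}) and the submultiplicativity-type inequality (\ref{e120}) with $s=2$ exactly as in the proof of Proposition~\ref{prophollap}, so that after one factor of $h_{\bM}$ is absorbed the tail contributes a term that is, up to a constant, smaller than any $M_nM'_n|z|^n$ for $|z|$ small; here moderate growth $(\mu)$ for $\M\cdot\M'$ (Proposition~\ref{propprodfuertregu}) is what lets one recognise the outcome as an $\M\cdot\M'$-asymptotic bound.

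For part (ii), one argues analogously using the Borel operator. Fix $\tau$ with $|\tau-d|<(\a-\o(\M))\pi/2$ and work on a subsector of $S(\tau,\varepsilon\o(\M)/\pi)$; write
\begin{equation*}
T^-_ef(u)-\sum_{p=0}^{n-1}\frac{a_p}{m_e(p)}u^p=\frac{-1}{2\pi i}\int_{\delta_{\o(\M)}(\tau)}E(u/z)\Big(f(z)-\sum_{p=0}^{n-1}a_pz^p\Big)\frac{dz}{z},
\end{equation*}
the monomial terms being cancelled by (\ref{equaBoretransmonom}). On the two segments of $\delta_{\o(\M)}(\tau)$ one inserts $|f(z)-\sum a_pz^p|\le C A^n M'_n|z|^n$ and $|E(u/z)|\le C'h_{\bM}(Kr_2/|u|)^{-1}\cdot(\text{something})$; more precisely one uses \ref{defikernelMsumm}(\textsc{vi}) to control $\int_0^{r_2}t^{-1}|E(e^{i\theta}/t)|\,t^{n}\,dt$ after the scaling $z=|u|s$, obtaining a bound $C D^n (M'_n/M_n)|u|^n$ — the division by $M_n$ appearing because the natural integral $\int_0^{r_2/|u|}s^{n-1}|E(\ldots/s)|ds$ is, by the growth $|E(w)|\le C e^{M(|w|/K)}$ and a computation dual to (\ref{equacotasintegrhM}), of size $D^n/M_n$ times a power of $|u|$. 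On the circular arc $\delta_2$ of radius $r_2$ one simply bounds $|z|=r_2$, $|f(z)-\sum a_pz^p|\le CA^nM'_nr_2^n$ and $|E(u/z)|\le C e^{M(|u|/(Kr_2))}$; for $|u|$ bounded this is harmless, so this piece only affects constants. Finally one checks the contributions agree across neighbouring $\tau$ by Cauchy's theorem, so they glue to an $\M'/\M$-asymptotic expansion of $T^-_ef$ on $S(d,\a-\o(\M))$, matching the formal transform $\hat T^-_e\hat f$ termwise.

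The main obstacle is the estimate on $\M'/\M$ in part (ii): unlike $\M\cdot\M'$, the quotient sequence $\M'/\M$ need not be strongly regular (and this is flagged in the text), so one cannot appeal to any structural property of it and must instead produce the bound $C_TD_T^n (M'_n/M_n)|u|^n$ directly from the interplay between the remainder estimate $M'_n|z|^n$ and the gain of a factor $1/M_n$ coming from integrating against the kernel $E$, whose growth is exactly $e^{M(|z|/K)}$. Pinning down this $1/M_n$ factor requires the dual of the inequality (\ref{equacotasintegrhM}), namely a lower-type estimate controlling $\int_0^{\delta}s^{n-1}E(c/s)ds$ from above by $D^n/M_n$, which follows from $|E(w)|\le Ch_{\bM}(K/|w|)^{-1}$ and the definition of $h_{\bM}$ but must be stated carefully; the analogous step in part (i), by contrast, is routine once (\ref{equacotasintegrhM}) is in hand. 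A secondary technical point is the rigorous justification of the contour rotations and of interchanging sum and integral, which is handled exactly as in Proposition~\ref{prophollap} and in~\cite[p.\ 87--88]{balserutx} and needs only be cited.
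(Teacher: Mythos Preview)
For part (i) your outline is close in spirit but organised differently: you split the Laplace integral at a finite point $a$, whereas the paper avoids any split by first establishing the merged remainder estimate
\[
\Big|f(u)-\sum_{k<n}f_ku^k\Big|\le \frac{c_1k_1^nM'_n|u|^n}{h_{\M}(k_1/|u|)}
\]
valid for \emph{all} $u$ on the ray (obtained by combining the $\M'$-asymptotic bound near $0$ with the $\M$-growth at infinity), and then integrating against the kernel bound in one stroke via (\ref{e120}) and (\ref{equacotasintegrhM}). Your appeal to Proposition~\ref{propprodfuertregu} is misplaced: that result requires $\M'$ to be strongly regular, whereas here $\M'$ is an arbitrary positive sequence, and in any case no structural property of $\M\cdot\M'$ is needed merely to state an $\M\cdot\M'$-asymptotic bound.

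The genuine gap is in part (ii). You keep the radius $r_2$ of the contour $\delta_{\o(\M)}(\tau)$ fixed and assert that the arc $\delta_2$ ``only affects constants''. It does not: with fixed $r_2$ the arc contributes a term of order $C(Ar_2)^nM'_n$, which cannot be dominated by $D^n(M'_n/M_n)|u|^n$ for large $n$ since $M_n^{1/n}\to\infty$. The segments suffer the same defect: bounding $s^n\le(r_2/|u|)^n$ and invoking condition~(\textsc{vi}) yields at best $C(kr_2)^nM'_n$, again with no factor $1/M_n$; the ``computation dual to (\ref{equacotasintegrhM})'' you appeal to does not exist in the form you need, because the global bound $|E(w)|\le Ce^{M(|w|/K)}$ is an \emph{upper} bound that blows up and controls nothing on those rays. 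The missing idea is to let the radius depend on $n$: the paper chooses $r=|u|/(Km_n)$, so that on the arc $r^n/h_{\M}(Kr/|u|)=|u|^n/(K^nM_n)$ thanks to the identity $h_{\M}(1/m_n)=M_n/m_n^n$, while on the segments the crude bound $s^n\le 1/(Km_n)^n\le 1/(K^nM_n)$ (using $m_n\ge M_n^{1/n}$ from (\ref{equameneequivMene})) together with the uniform integrability of $s^{-1}|E(e^{i\theta}/s)|$ from condition~(\textsc{vi}) produces the same $1/M_n$. This $n$-dependent shrinking of the contour is precisely the mechanism that manufactures the division by $M_n$, and it is absent from your proposal.
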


\begin{proof1}
(i) From Proposition~\ref{prophollap} we know that $g:=T_ef\in\mathcal{O}(G(d,\a+\o(\M)))$ for a sectorial region $G=G(d,\a+\o(\M))$. Put $\hat{f}=\sum_{n=0}^\infty f_nu^n$. Given $\delta\in(0,\a)$, there exist $c,k>0$ such that for every $u\in S(d,\delta,1)$ and every $n\in\N$ one has
\begin{equation*}
\big|f(u)-\sum_{k=0}^{n-1} f_ku^k\big|\le ck^nM_n'|u|^n.
\end{equation*}
Then, we deduce that $|f_n|\le ck^nM_n'$ for every $n$. Also, since $f$ is of $\M-$growth, for every $u\in S(d,\delta)$ we have, by suitably enlarging the constants,
\begin{equation}\label{equaestimaproxordennMgrowth}
\big|f(u)-\sum_{k=0}^{n-1} f_ku^k\big|\le \frac{c_1k_1^nM_n'|u|^n}{h_{\M}(k_1/|u|)}.
\end{equation}
Observe that, by (\ref{equaLapltransmonom}), for every $z\in G$ we have
$$
g(z)-\sum_{k=0}^{n-1} m_e(k)f_kz^k=T_e\big(f(u)-\sum_{k=0}^{n-1} f_ku^k\big)(z).
$$
So, given $\tau\in\R$ with $|\tau-d|<\a\pi/2$ and $z\in S(\tau,\b)$ with $\b\in(0,\omega(\M))$ and $|z|$ small enough, we have
$$
g(z)-\sum_{k=0}^{n-1} m_e(k)f_kz^k=\int_0^{\infty(\tau)}e(u/z)\big(f(u)-\sum_{k=0}^{n-1} f_ku^k\big)\frac{du}{u}.
$$
Since $u/z\in S_{\b}$, by Definition~\ref{defikernelMsumm}(\textsc{iii}) there exist $c_2,k_2>0$ such that
$$
|e(u/z)|\le c_2h_{\M}(k_2|z|/|u|),
$$
and so, taking into account (\ref{equaestimaproxordennMgrowth}) and (\ref{e120}),
$$
\Big|\frac{e(u/z)}{u}\big(f(u)-\sum_{k=0}^{n-1} f_ku^k\big)\Big|\le c_1c_2k_1^nM_n'|u|^{n-1}\frac{h_{\bM}(k_2|z|/|u|)}{h_{\bM}(k_1/|u|)}\le
c_1c_2k_1^nM_n'|u|^{n-1}\frac{h_{\bM}^2(\rho(2)k_2|z|/|u|)}{h_{\bM}(k_1/|u|)}.
$$
For $z\in S(\tau,\beta,k_1/(\rho(2)k_2))$ we have $\rho(2)k_2|z|/|u|<k_1/|u|$ and,
$h_{\M}$ being increasing, we obtain that
$$
\Big|\frac{e(u/z)}{u}\big(f(u)-\sum_{k=0}^{n-1} f_ku^k\big)\Big|\le c_1c_2k_1^nM_n'|u|^{n-1}h_{\bM}(\rho(2)k_2|z|/|u|),
$$
what implies that
$$
\Big|g(z)-\sum_{k=0}^{n-1} m_e(k)f_kz^k\Big|\le
c_1c_2k_1^nM_n'\int_{0}^{\infty}s^{n-1}h_{\bM}(\rho(2)k_2|z|/s)ds.
$$
Making the change of variable $s=|z|t$ and applying~(\ref{equacotasintegrhM}) leads to the conclusion.\par\noindent
(ii) From Proposition~\ref{propholBor}, $g:=T^{-}_ef$ belongs to $\mathcal{O}(S(d,\a-\o(\M)))$.
As in the proof of that Proposition, we limit ourselves to the case $\o(\M)<2$. It suffices to obtain estimates on a bounded subsector $T=S(\tau,\varepsilon\omega(\M)/\pi,\rho)\ll S(d,\a-\omega(\M))$, for $\tau$ and $\varepsilon$ entering in the definition of the path $\delta_{\o(\M)}(\tau)=\delta_1+\delta_2+\delta_3$ within $G(d,\a)$ ($\delta_1$ and $\delta_3$ are segments in directions $\theta_1$ and $\theta_3$, respectively, and $\delta_2$ is a circular arc with radius $r>0$).
If $\hat{f}=\sum_{n=0}^\infty f_nu^n$, there exist $c,k>0$ such that for every $z$ in the trace of $\delta_{\o(\M)}(\tau)$ and every $n\in\N$ one has
\begin{equation}\label{equacotasdesaMprima}
\big|f(z)-\sum_{k=0}^{n-1} f_kz^k\big|\le ck^nM_n'|z|^n.
\end{equation}
By (\ref{equaBoretransmonom}), for every $u\in T$ we have
\begin{align}
g(u)-\sum_{k=0}^{n-1} \frac{f_k}{m_e(k)}u^k&=T^{-}_e\big(f(z)-\sum_{k=0}^{n-1} f_kz^k\big)(u)=
\frac{-1}{2\pi i}\int_{\delta_{\o(\M)}(\tau)}E(u/z)\big(f(z)-\sum_{k=0}^{n-1} f_kz^k\big)\frac{dz}{z}\nonumber\\
&=\frac{-1}{2\pi i}\sum_{j=1}^3\int_{\delta_j}E(u/z)\big(f(z)-\sum_{k=0}^{n-1} f_kz^k\big)\frac{dz}{z}.\label{equaBorelsuma3integ}
\end{align}
By applying (\ref{equacotasdesaMprima}) and (\ref{equacotasE}), we see that
$$
\Big|\int_{\delta_2}E(u/z)\big(f(z)-\sum_{k=0}^{n-1} f_kz^k\big)\frac{dz}{z}\Big|\le
c(\theta_1-\theta_3)k^nM_n'r^{n}\max_{|z|=r}|E(u/z)|
\le \frac{cC(\theta_1-\theta_3)k^nM_n'r^{n}}{h_{\M}(Kr/|u|)}.
$$
So, for $n$ large enough we may choose $r=|u|/(Km_n)$ and, since $h_{\M}(1/m_n)=M_n/m_n^n$, we deduce that
\begin{equation}\label{equacotadelta2}
\Big|\int_{\delta_2}E(u/z)\big(f(z)-\sum_{k=0}^{n-1} f_kz^k\big)\frac{dz}{z}\Big|
\le \frac{cC(\theta_1-\theta_3)k^nM_n'|u|^n}{K^nM_n}.
\end{equation}
On the other hand,
again (\ref{equacotasdesaMprima}) implies that for $j=1,3$,
\begin{align*}
\Big|\int_{\delta_j}E(u/z)\big(f(z)-\sum_{k=0}^{n-1} f_kz^k\big)\frac{dz}{z}\Big|&\le
ck^nM_n'\int_0^{r}t^{n}\frac{|E(ue^{-i\theta_j}/t)|}{t}dt\\
&=ck^nM_n'|u|^n\int_0^{r/|u|}s^{n}\frac{|E(e^{i(\arg(u)-\theta_j)}/s)|}{s}ds,
\end{align*}
after the change of variable $t=|u|s$. The same choice of $r$ as before leads to
\begin{align}
\Big|\int_{\delta_j}E(u/z)\big(f(z)-\sum_{k=0}^{n-1} f_kz^k\big)\frac{dz}{z}\Big|
&\le ck^nM_n'|u|^n\int_0^{1/(Km_n)}s^n\frac{|E(e^{i(\arg(u)-\theta_j)}/s)|}{s}ds\nonumber\\
&\le ck^nM_n'|u|^n\frac{1}{K^nm_n^n}\int_0^{1/(Km_n)}\frac{|E(e^{i(\arg(u)-\theta_j)}/s)|}{s}ds.
\label{equacotadeltaj}\end{align}
Since $\lim_{n\to\infty}m_n=\infty$, the last integral admits an upper bound independent of $u$ and $n$ because of condition (\textsc{vi}) in Definition~\ref{defikernelMsumm}. According to
(\ref{equameneequivMene}), (\ref{equaBorelsuma3integ}), (\ref{equacotadelta2}) and (\ref{equacotadeltaj}), the conclusion is reached.
\end{proof1}

We are ready for giving a definition of summability in a direction with respect to a kernel $e$ of $\M$-summability. Let $T_e$ be the corresponding Laplace operator, and recall that $\mathfrak{m}_e$ is strongly regular and equivalent to $\M$, so that, on one hand, $\Lambda_{\M}=\Lambda_{\mathfrak{m}_e}$ and, on the other hand, it makes sense to consider the space $\mathcal{O}^{\mathfrak{m}_e}(S)$ for any unbounded sector $S$ and, moreover,
$\mathcal{O}^{\mathfrak{m}_e}(S)=\mathcal{O}^{\M}(S)$ (see (\ref{equahdeMequi})).\par\noindent

\begin{defi}\label{defisumableporunnucleo}
We say $\hat{f}=\sum_{n\ge 0}\displaystyle\frac{f_n}{n!}z^n$ is \textit{$T_e$-summable in direction $d\in\R$} if:
 \begin{itemize}
 \item[(i)] $(f_n)_{n\in\N_0}\in\Lambda_{\mathfrak{m}_e}$, so that $g:=
\hat{T}_e^{-}\hat{f}=\sum_{n\ge 0}\displaystyle\frac{f_n}{n!m_e(n)}z^n$ converges in a disc, and
\item[(ii)] $g$ admits analytic continuation in a sector $S=S(d,\varepsilon)$ for some $\varepsilon>0$, and $g\in\mathcal{O}^{\mathfrak{m}_e}(S)$.
\end{itemize}
\end{defi}

The next result states the equivalence between $\M-$summability and $T_e-$summability in a direction, and provides a way to recover the $\M-$sum in a direction of a summable power series by means of the formal and analytic transforms previously introduced.

\begin{theo}\label{teorsumableequivTsumable}
Given a strongly regular sequence $\M$, a direction $d$ and a formal power series $\hat f=\sum_{n\ge 0}\displaystyle\frac{f_n}{n!}z^n$, the following are equivalent:
\begin{itemize}
\item[(i)] $\hat f$ is $\M$-summable in direction $d$.
\item[(ii)] For every kernel $e$ of $\M$-summability, $\hat{f}$ is $T_e$-summable in direction $d$.
\item[(iii)] For some kernel $e$ of $\M$-summability, $\hat{f}$ is $T_e$-summable in direction $d$.
\end{itemize}
In case any of the previous holds, we have (after analytic continuation)
$$
\mathcal{S}_{\M,d}\hat{f}=T_e(\hat{T}_e^{-}\hat{f})
$$
for any kernel $e$ of $\M$-summability.
\end{theo}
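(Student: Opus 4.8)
The plan is to prove the cycle of implications $(i)\Rightarrow(ii)\Rightarrow(iii)\Rightarrow(i)$ and to read the reconstruction formula off the last step. The analytic substance is already contained in Theorem~\ref{teorrelacdesartransfBL}, and the whole argument consists in feeding it the right data; the only conceptual point is that an asymptotic expansion with respect to the \emph{constant} sequence $(1)_{n\in\N_0}$ is precisely the statement that the function is holomorphic at the origin, i.e. coincides on a small disc with a convergent power series. I will use throughout that $\mathfrak{m}_e$ is strongly regular and equivalent to $\M$, so that $\Lambda_{\M}=\Lambda_{\mathfrak{m}_e}$ and $\mathcal{O}^{\mathfrak{m}_e}(S)=\mathcal{O}^{\M}(S)$ for every unbounded sector $S$, as recalled before Definition~\ref{defisumableporunnucleo}.

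\textbf{$(i)\Rightarrow(ii)$.} Assume $\hat f$ is $\M$-summable in direction $d$ and let $f=\mathcal{S}_{\M,d}\hat f\in\tilde{\mathcal{A}}_{\M}(G)$ with $f\sim_{\M}\hat f$, $G=G(d,\ga)$, $\ga>\o(\M)$. By Proposition~\ref{propcotaderidesaasin}(ii) one has $(f_n)_{n\in\N_0}\in\Lambda_{\M}=\Lambda_{\mathfrak{m}_e}$, which is item~(i) of Definition~\ref{defisumableporunnucleo}, so $g:=\hat T_e^{-}\hat f$ has positive radius of convergence. Now consider the Borel transform $T_e^{-}f$: since $f$ is holomorphic in $G(d,\ga)$ with $\ga>\o(\M)$ and continuous at $0$, Proposition~\ref{propholBor} gives that $T_e^{-}f$ is holomorphic and of $\M$-growth in the sector $S(d,\ga-\o(\M))$, and Theorem~\ref{teorrelacdesartransfBL}(ii) applied with $\M'=\M$ (so that $\M'/\M=(1)_{n\in\N_0}$) gives $T_e^{-}f\sim_{(1)}\hat T_e^{-}\hat f=g$ on that sector. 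Hence $T_e^{-}f$ coincides with $g$ near $0$, i.e. it is the analytic continuation of $g$ to $S(d,\ga-\o(\M))$, and it lies in $\mathcal{O}^{\M}(S(d,\ga-\o(\M)))=\mathcal{O}^{\mathfrak{m}_e}(S(d,\ga-\o(\M)))$; this is item~(ii) of Definition~\ref{defisumableporunnucleo} with $\varepsilon=\ga-\o(\M)>0$. Thus $\hat f$ is $T_e$-summable in direction $d$ for \emph{every} kernel $e$. The implication $(ii)\Rightarrow(iii)$ is then immediate, since kernels of $\M$-summability do exist by Section~\ref{sectkernelsfromproxorder} (and via Remark~\ref{remadefinucleos}(iii) when $\o(\M)\ge 2$).

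\textbf{$(iii)\Rightarrow(i)$.} Let $e$ be a kernel for which $\hat f$ is $T_e$-summable: $g:=\hat T_e^{-}\hat f$ converges near $0$, continues analytically to $S=S(d,\varepsilon)$, and $g\in\mathcal{O}^{\mathfrak{m}_e}(S)=\mathcal{O}^{\M}(S)$. Since $g$ agrees near the origin with its convergent Taylor series, $g\sim_{(1)}\hat g$ on $S$ with $\hat g=\hat T_e^{-}\hat f$; applying Theorem~\ref{teorrelacdesartransfBL}(i) with $\M'=(1)_{n\in\N_0}$ (so $\M\cdot\M'=\M$) yields that $T_e g$, which by Proposition~\ref{prophollap} is holomorphic in a sectorial region $G(d,\varepsilon+\o(\M))$, satisfies $T_e g\sim_{\M}\hat T_e\hat g=\hat T_e\hat T_e^{-}\hat f=\hat f$ there. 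As $\varepsilon+\o(\M)>\o(\M)$, this is exactly the statement that $\hat f$ is $\M$-summable in direction $d$ (Definition~\ref{defisumable}), and by the quasianalyticity provided by Theorem~\ref{teororderM} we get $\mathcal{S}_{\M,d}\hat f=T_e g=T_e(\hat T_e^{-}\hat f)$ for this kernel. Combining with $(i)\Rightarrow(ii)$, the same identity holds for every kernel $e$ of $\M$-summability; alternatively, once $(i)$ holds one may invoke the inversion identity $T_e T_e^{-}f=f$ of~(\ref{equaLaplBorelident}) directly. For $\o(\M)\ge 2$, each step reduces to the case $\o(\M)<2$ exactly as in the proofs of Propositions~\ref{prophollap} and~\ref{propholBor} and of Theorem~\ref{teorrelacdesartransfBL}, through the identity~(\ref{equadefiBoreltransomegamayor2}).

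\textbf{Where the difficulty lies.} No new estimates are needed; the one point that must be stated with care is the equivalence ``$g\sim_{(1)}\hat g$ on a sectorial region $\Longleftrightarrow$ $g$ holomorphic at $0$'', together with the bookkeeping ensuring that the Borel transform furnished by Proposition~\ref{propholBor} shares the disc of convergence of $g$ on a connected set, so that ``the analytic continuation of $g$'' is unambiguous and matches the function appearing in Definition~\ref{defisumableporunnucleo}. Everything else is a matter of assembling Theorem~\ref{teorrelacdesartransfBL}, Propositions~\ref{prophollap} and~\ref{propholBor}, the mutual inverseness of $\hat T_e$ and $\hat T_e^{-}$, and the uniqueness of the $\M$-sum.
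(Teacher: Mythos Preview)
Your proof is correct and follows essentially the same approach as the paper: both directions are obtained by applying Theorem~\ref{teorrelacdesartransfBL} with the auxiliary sequence $(1)_{n\in\N_0}$ (as $\M'/\M$ for the Borel step and as $\M'$ for the Laplace step), together with Propositions~\ref{prophollap} and~\ref{propholBor}. Your write-up is in fact slightly more careful than the paper's in two places---you make explicit that $(ii)\Rightarrow(iii)$ relies on the existence of at least one kernel, and you spell out how the reconstruction formula $\mathcal{S}_{\M,d}\hat f=T_e(\hat T_e^{-}\hat f)$ follows from the $(iii)\Rightarrow(i)$ argument combined with uniqueness---but the mathematical content is identical.
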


\begin{proof1}
(i)$\implies$(ii) Let $f=\mathcal{S}_{\M,d}\hat{f}$, the $\M-$sum of $\hat f$ in direction $d$. Then $f\sim_{\M}\hat{f}$ in a sectorial region $G(d,\a)$ with $\a>\omega(\M)$, and moreover $(f_n)_{n\in\N_0}\in\Lambda_{\M}$. If we put $\M'=(1)_{n\in\N_0}$ (the constant sequence whose terms are all equal to 1), item (ii) in Theorem~\ref{teorrelacdesartransfBL} states that
$g:=T^{-}_ef\sim_{\M'}{\hat{T}}^{-}_e\hat{f}$, what implies that ${\hat{T}}^{-}_e\hat{f}$ converges to $g$ in a disk, and $g$ is, by Proposition~\ref{propholBor}, of $\M-$growth in a small unbounded sector around $d$, as we intended to prove.
\par\noindent
(ii)$\implies$(iii) Trivial.\par\noindent
(iii)$\implies$(i) Since $g:=\hat{T}_e^{-}\hat{f}$ converges in a disc and admits analytic continuation in a sector $S=S(d,\varepsilon)$ for some $\varepsilon>0$, we have that
$g\sim_{\M'}\hat{T}_e^{-}\hat{f}$ in $S$ with $\M'=(1)_{n\in\N_0}$. Moreover, $g\in\mathcal{O}^{\mathfrak{m}_e}(S)=\mathcal{O}^{\bM}(S)$, and due to (i) in Theorem~\ref{teorrelacdesartransfBL}, we obtain that the function $f:=T_eg$ is holomorphic in a sectorial region of opening greater than $\pi\omega(\M)$ and $f\sim_{\M}\hat{f}$ there, so we are done.
\end{proof1}


\begin{rema}\label{remanotassumabM}
\begin{itemize}
\item[(i)] In case $\M=\M_{1/k}$, the summability methods described are just the classical $k$-summability and $T_e$-summability (in a direction) for kernels $e$ of order $k>0$.
\item[(ii)] If $\M$ and $\mathbb{M}'$ are equivalent strongly regular sequences, the respective families of kernels of summability coincide, as it is easily deduced from~(\ref{equahdeMequi}), hence the summability methods just introduced for $\M$ and $\M'$ are all the same, as well as the sums provided for every $\M-$ (or equivalently, $\M'-$) summable series in a direction.

    In particular, consider a kernel $e$, its moment function $m_e$ and the strongly regular sequence of moments $\mathfrak{m}_e$, as in Remark~\ref{remamomentsarestronglyregular}(ii). According to Definition~\ref{defikernelMsumm}, Remark~\ref{remadefinucleos}(iii) and Remark~\ref{remamomentsarestronglyregular}(ii), for every $s>0$ one may deduce that $e^{(s)}(z):=e(z^{1/s})/s$ is a kernel for $\mathfrak{m}_e^{(s)}-$summability (recall that $\mathfrak{m}_e^{(s)}=(m_e^s(n))_{n\in\N_0}$) with moment function $m_{e^{(s)}}(\lambda)=m_e(s\lambda)$ and sequence of moments $\mathfrak{m}_{e^{(s)}}=(m_e(sn))_{n\in\N_0}$, and consequently, $\mathfrak{m}_e^{(s)}$ and $\mathfrak{m}_{e^{(s)}}$ are equivalent (Proposition~\ref{mequivm}) and
    \begin{equation}\label{equaomegamomentosescalados}
    \o((m_e(sn))_{n\in\N_0})=\o(\mathfrak{m}_e^{(s)})=s\o(\mathfrak{m}_e)=s\o((m_e(n))_{n\in\N_0}).
    \end{equation}
    Moreover, by (\ref{equahdeMequi}) and (\ref{equahMpotencia}), there exist $A,B>0$ such that for every $t\ge 0$ one has
    \begin{equation}\label{equahdeMese}
    \big(h_{\mathfrak{m}_e}(At)\big)^s=h_{\mathfrak{m}_e^{(s)}}(A^st^s)\le h_{\mathfrak{m}_{e^{(s)}}}(t^s)\le h_{\mathfrak{m}_e^{(s)}}(B^st^s)=\big(h_{\mathfrak{m}_e}(Bt)\big)^s.
    \end{equation}
\end{itemize}
\end{rema}

\section{Kernels of summability from proximate orders}\label{sectkernelsfromproxorder}

In this section we show how one can construct kernels of summability for a strongly regular sequence $\M$ by relying on the notion of analytic proximate orders, appearing in the theory of growth of entire functions and developed, among others,  by E. Lindel\"of, G. Valiron, B. Ja. Levin, A. A. Goldberg, I. V. Ostrosvkii and L. S. Maergoiz (see the references \cite{Valiron,Levin,GoldbergOstrowskii,Maergoiz}).

\begin{defi}[\cite{Valiron}]
We say a real function $\ro(r)$, defined on $(c,\infty)$ for some $c\ge 0$, is a \textit{proximate order} if the following hold:
 \begin{enumerate}[(i)]
  \item $\rho(r)$ is continuous and piecewise continuously differentiable in $(c,\infty)$,
  \item $\ro(r) \geq 0$ for every $r>c$,
  \item $\lim_{r \to \infty} \ro(r)=\ro< \infty$,
  \item $\lim_{r  \to \infty} r \ro'(r) \log(r) = 0$.
 \end{enumerate}
\end{defi}

\begin{defi}
Two proximate orders $\rho_1(r)$ and $\rho_2(r)$ are said to be \textit{equivalent} if
$$
\lim_{r\to\infty}\big(\rho_1(r)-\rho_2(r)\big)\log(r)=0.
$$
\end{defi}

\begin{rema}\label{remaordenaproxequiv}
If $\rho_1(r)$ and $\rho_2(r)$ are equivalent and $\lim_{r\to\infty}\rho_1(r)=\rho$, then  $\lim_{r\to\infty}\rho_2(r)=\rho$ and  $\lim_{r\to\infty}r^{\rho_1(r)}/r^{\rho_2(r)}=1$.
\end{rema}


\begin{defi}
Let $\ro(r)$ be a proximate order and $f$ be an entire function. The \textit{type of $f$ associated with $\ro(r)$} is
\begin{equation*}
 \sigma_f(\ro(r))=\sigma_f:=\limsup_{r \to \infty} \frac{\log \max_{|z| = r}|f(z)|}{r^{\ro(r)}}.
\end{equation*}
\parn
We say $\ro(r)$ is a \textit{proximate order of $f$} if $ 0<\sigma_f<\infty$.
\end{defi}

\begin{rema}\label{remacotasproximorder}
If  $\ro(r)\to\ro>0$ is a proximate order of $f$, then $f$ is of exponential order $\ro$ and there exists $K>0$ such that for every $z\in\C$ one has
$$|f(z)|\le \exp(K|z|^{\ro(|z|)}).$$
Moreover, and according to Remark~\ref{remaordenaproxequiv}, the type of $f$ does not change if we substitute a proximate order of $f$ by an equivalent one.
\end{rema}

%


The following result of L. S. Maergoiz \cite{Maergoiz} will be the key for our construction.
For an arbitrary sector bisected by the positive real axis, it provides holomorphic functions whose restriction to $(0,\infty)$ is real and has a growth at infinity specified by a prescribed proximate order.

\begin{theo}[\cite{Maergoiz}, Thm.\ 2.4]\label{propanalproxorde}
Let $\ro(r)$ be a proximate order with $\ro(r)\to\ro>0$ as $r\to \infty$. For every $\ga>0$ there exists an analytic function $V(z)$ in $S_\ga$ such that:
  \begin{enumerate}[(i)]
\item  For every $z \in S_\ga$,
 \begin{equation*}
    \lim_{r \to \infty} \frac{V(zr)}{V(r)}= z^{\ro},
  \end{equation*}
uniformly in the compact sets of $S_\ga$.
\item $\overline{V(z)}=V(\overline{z})$ for every $z \in S_\ga$ (where, for $z=(|z|,\arg(z))$, we put $\overline{z}=(|z|,-\arg(z))$).
\item $V(r)$ is positive in $(0,\infty)$, monotone increasing and $\lim_{r\to 0}V(r)=0$.
\item The function $t\in\R\to V(e^t)$ is strictly convex (i.e. $V$ is strictly convex relative to $\log(r)$).
\item The function $\log(V(r))$ is strictly concave in $(0,\infty)$.
\item  The function $\ro_0(r):=\log( V(r))/\log(r)$, $r>0$, is a proximate order equivalent to $\ro(r)$.
    \end{enumerate}
\end{theo}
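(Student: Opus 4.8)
The plan is to obtain $V$ by exponentiating a smooth primitive attached to $\rho$, first on the positive axis and then by analytic continuation into $S_\gamma$. As a preliminary reduction, I would invoke the classical regularisation theory for proximate orders (see~\cite{Valiron,Levin,GoldbergOstrowskii}) to replace $\rho(r)$ by an equivalent proximate order, still written $\rho$, which is real-analytic on some $(c,\infty)$, tends to $\rho$, is still a proximate order (so in particular $r\rho'(r)\log r\to 0$), extends holomorphically and boundedly to the strip $\{|\Im u|<\gamma\pi/2\}$ in the variable $u=\log r$ with $\rho\to\rho$ there, and is smooth enough that $W(t):=\rho(e^t)\,t$ satisfies $W'(t)\to\rho$ and $W''(t)\to 0$ as $t\to\infty$ (equivalently, $r^2\rho''(r)\log r\to 0$). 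By Remark~\ref{remaordenaproxequiv} and Remark~\ref{remacotasproximorder}, proving the theorem for this $\rho$ yields it for the original one.

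Next I would extend $W$ to all of $\R$ as a real-analytic, strictly increasing function with $W(t)\to-\infty$ as $t\to-\infty$, with $W'$ bounded and bounded away from $0$ there, and chosen so that $W''+W'^2>0$ and $W''-W'<0$ on all of $\R$ (both inequalities hold automatically for large $t$ because $W'\to\rho>0$ and $W''\to 0$, and can be forced on the rest by taking $W$ affine of positive slope there). Set $V(r):=\exp(W(\log r))$ for $r>0$. Then $V>0$, $V$ is strictly increasing and $V(r)\to 0$ as $r\to 0$, which is (iii); and since
\[
\tfrac{d^2}{dt^2}V(e^t)=\big(W''+W'^2\big)(t)\,V(e^t),\qquad \tfrac{d^2}{dr^2}\log V(r)=r^{-2}\big(W''-W'\big)(\log r),
\]
the sign conditions above give (iv) and (v).

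For the analytic continuation, put $V(z):=\exp\!\big(W(\log a)+\int_a^z W'(\log t)\,\tfrac{dt}{t}\big)$ for $z\in S_\gamma$, where the integrand is holomorphic on $S_\gamma$ by the strip property; then $V\in\mathcal O(S_\gamma)$, it prolongs the function just constructed, and it never vanishes. Since $V$ is real on $(0,\infty)$, Schwarz reflection gives $\overline{V(z)}=V(\overline z)$, i.e.\ (ii). For (i), for $z$ in a compact $L\subset S_\gamma$ one has
\[
\log\frac{V(zr)}{V(r)}=\int_1^z W'(\log r+\log s)\,\frac{ds}{s}\ \xrightarrow[\ r\to\infty\ ]{}\ \rho\log z ,
\]
uniformly on $L$, because $W'\to\rho$ uniformly on substrips $\{|\Im u|\le\delta<\gamma\pi/2\}$ as $\Re u\to\infty$ (a Vitali/normal-families argument from boundedness of $W'$ on the strip and $W'\to\rho$ on $\R$); exponentiating yields $V(zr)/V(r)\to z^\rho$ uniformly on compacts, which is (i). Finally set $\rho_0(r):=\log V(r)/\log r=W(\log r)/\log r$; for $r$ large one has $\rho_0(r)=\rho(r)$ by construction, so $\rho_0$ is a proximate order (inheriting all its defining properties from $\rho$) and equivalent to $\rho$, which is (vi); moreover (i) together with Remark~\ref{remacotasproximorder} forces $0<\sigma_V<\infty$, so $\rho(r)$, hence $\rho_0(r)$, is a proximate order of $V$.

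The essential difficulty is concentrated in the preliminary reduction: one must produce a \emph{single} regularisation of the given proximate order that simultaneously (a) is smooth enough for the second-order estimate $r^2\rho''(r)\log r\to 0$, which is exactly what reconciles the seemingly conflicting conditions (iv) (convexity relative to $\log r$) and (v) (concavity of $\log V$ relative to $r$), and (b) extends holomorphically and under control to a strip as wide as $\gamma\pi$ — the admissible opening of the sector and the boundedness of $\arg V(z)$ off the real axis being governed precisely by the slow-variation condition $r\rho'(r)\log r\to 0$. Once such a regularisation is available, the remaining steps reduce to routine estimates.
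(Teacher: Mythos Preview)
The paper does not supply a proof of this statement: it is quoted as Theorem~2.4 of Maergoiz~\cite{Maergoiz} and used as a black box throughout Section~\ref{sectkernelsfromproxorder}. So there is no proof in the paper against which to compare.

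Evaluated on its own terms, your sketch is a reasonable roadmap but not a proof. You are right that once an analytic $W$ on a strip of width $\gamma\pi$ with $W'\to\rho>0$ and $W''\to 0$ is available, setting $V(z)=\exp(W(\log z))$ makes (i)--(vi) routine: eventually $-W'^{2}<W''<W'$, which yields (iv) and (v) simultaneously; the normal-families argument for (i) is sound; and (ii), (iii), (vi) follow at once. But the entire substance of the theorem lies in what you call the ``preliminary reduction'': producing an equivalent proximate order that extends holomorphically and boundedly to a horizontal strip of \emph{arbitrary prescribed width} $\gamma\pi$ while retaining $r\rho'(r)\log r\to 0$ and the second-order control $r^{2}\rho''(r)\log r\to 0$. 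The classical regularisation results in \cite{Valiron,Levin,GoldbergOstrowskii} furnish smooth, even real-analytic, proximate orders on the line, but they do not hand you a holomorphic extension to a strip whose width you may choose freely; that is precisely Maergoiz's contribution, obtained through an explicit integral smoothing. By deferring this step to references that, as invoked, do not deliver it, you have written an outline of the argument with the decisive construction missing. You flag this yourself in your last paragraph, correctly; but that means the proposal describes the difficulty rather than resolving it.
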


\begin{rema}
We denote by $\mathfrak{B}(\ga,\ro(r))$ the class of such functions $V$. Given a strongly regular sequence $\M$ and its associated function $M(r)$ (see~(\ref{equadefiMdet})), suppose the function $d(r)=\log(M(r))/\log(r)$ is a proximate order. The main results in \cite{sanz13} rested on the fact that, for every $V\in\mathfrak{B}(2\omega(\M),d(r))$, the function $G$ given by
$G(z)=\exp(-V(1/z))$ is a flat function in the class $\tilde{\mathcal{A}}_{\M}(S_{\omega(\M)})$.
We will make use of some of its properties in the next result.
\end{rema}


\begin{theo}\label{teorconstrkernels}
Suppose $\M$ is a strongly regular sequence with $\omega(\M)<2$ and such that the function $d(r)=\log(M(r))/\log(r)$ is a proximate order. Then, for every $V\in\mathfrak{B}(2\omega(\M),d(r))$, the function $e_V$ defined in $S_{\omega(\M)}$ by
$$
e_V(z)=\frac{1}{\omega(\M)}z\exp(-V(z))
$$
is a kernel of $\M$-summability.
\end{theo}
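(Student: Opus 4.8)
\noindent The plan is to verify, one at a time, the six conditions of Definition~\ref{defikernelMsumm} for $e_V$. Throughout, recall that by hypothesis $d(r)=\log(M(r))/\log(r)$ is a proximate order and, by~(\ref{equaordeMdet}) and~(\ref{equaordequasM}), $d(r)\to\rho[M]=1/\o(\M)$; hence Theorem~\ref{propanalproxorde} applies to $V\in\mathfrak{B}(2\o(\M),d(r))$ with $\ga=2\o(\M)$. Conditions (\textsc{i}) and (\textsc{iv}) are immediate: $e_V$ is holomorphic in $S_{\o(\M)}\subset S_{2\o(\M)}$ since $V$ is, and $e_V(x)=\tfrac{1}{\o(\M)}xe^{-V(x)}>0$ for $x>0$ by Theorem~\ref{propanalproxorde}(iii).

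The core of the proof is the growth estimate (\textsc{iii}). Fix $\varepsilon\in(0,\o(\M))$. As $\{e^{i\theta}:|\theta|\le(\o(\M)-\varepsilon)\pi/2\}$ is a compact subset of $S_{2\o(\M)}$, Theorem~\ref{propanalproxorde}(i) gives $V(re^{i\theta})/V(r)\to e^{i\theta/\o(\M)}$ as $r\to\infty$, uniformly for $|\theta|\le(\o(\M)-\varepsilon)\pi/2$; since there $|\theta|/\o(\M)\le(1-\varepsilon/\o(\M))\pi/2<\pi/2$, one gets $\Re(e^{i\theta/\o(\M)})=\cos(\theta/\o(\M))\ge\delta_\varepsilon:=\sin(\varepsilon\pi/(2\o(\M)))>0$, and therefore $\Re V(z)\ge\tfrac12\delta_\varepsilon V(|z|)$ for $z\in S_{\o(\M)-\varepsilon}$ with $|z|$ large. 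Since $V(r)/M(r)\to1$ by Theorem~\ref{propanalproxorde}(vi) and Remark~\ref{remaordenaproxequiv}, in fact $\Re V(z)\ge\tfrac14\delta_\varepsilon M(|z|)$ for $|z|$ large. Choosing an integer $s\ge8/\delta_\varepsilon$ and rewriting~(\ref{e120}) in the form $M(u)\ge sM(u/\ro(s))$, one obtains $\tfrac14\delta_\varepsilon M(|z|)\ge M(|z|/\ro(s))+\tfrac18\delta_\varepsilon M(|z|)$, and the last term eventually exceeds $\log(|z|/\o(\M))$ because $M(r)/\log r\to\infty$ (indeed $M(r)\ge p\log r-\log M_p$ for every $p$). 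Hence $|e_V(z)|=\tfrac{|z|}{\o(\M)}e^{-\Re V(z)}\le e^{-M(|z|/\ro(s))}=h_{\M}(\ro(s)/|z|)$ for $z\in S_{\o(\M)-\varepsilon}$ with $|z|\ge R_\varepsilon$. For $|z|<R_\varepsilon$ I would invoke that $V$ extends continuously to the origin with $V(0)=0$ --- a standard property of the functions in the classes $\mathfrak{B}(\ga,\ro(r))$, see~\cite{Maergoiz} and~\cite{sanz13} --- so that $e_V$ is bounded on $\{z\in S_{\o(\M)-\varepsilon}:|z|\le R_\varepsilon\}$ while $h_{\M}(\ro(s)/|z|)$ is bounded below there; enlarging the constant then gives (\textsc{iii}) with $k=\ro(s)$.

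Condition (\textsc{ii}) is then a byproduct: $z^{-1}e_V(z)=\tfrac{1}{\o(\M)}e^{-V(z)}$, and since $V(0)=0$ with $V$ continuous, $\Re V$ is bounded below on each bounded proper subsector of $S_{\o(\M)}$, so $|e_V(w)|\le C_\varepsilon|w|$ there; taking $t_0=1$ and, for $z_0\in S_{\o(\M)}$, a small neighborhood $U\ni z_0$ inside some $S_{\o(\M)-\varepsilon}$ and bounded away from $0$, the integrand $t^{-1}|e_V(t/z)|$ is bounded on $(0,1]$ by $C_\varepsilon(\inf_U|z|)^{-1}$, and integrability follows. For (\textsc{v}), the convergence, the continuity on $\{\Re\lambda\ge0\}$, the holomorphy on $\{\Re\lambda>0\}$ and the positivity on $[0,\infty)$ of $m_{e_V}(\lambda)=\tfrac{1}{\o(\M)}\int_0^\infty t^{\lambda}e^{-V(t)}\,dt$ are routine (near $0$ the integrand is $O(t^{\Re\lambda})$; near $\infty$ it is $O(t^{-2})$, since $V(t)\ge\tfrac12M(t)$ and $M(t)/\log t\to\infty$). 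For the bound~(\ref{equacotasE}) I would estimate the moments from below: restricting the integral to $[m_n/2,m_n]$ and using that $V$ is increasing with $V(m_n)\le2M(m_n)\le4n\log A$ (the last inequality from $M(m_n)=n\log m_n-\log M_n$ and~(\ref{equameneequivMene})), together with $m_n^n\ge M_n$, one gets $m_{e_V}(n)\ge c_1(2A^4)^{-n}M_n$, hence $1/m_{e_V}(n)\le c_1^{-1}(2A^4)^n/M_n$; by Komatsu's Proposition~\ref{propKomatsu}, $E(z)=\sum_n z^n/m_{e_V}(n)$ is entire and of $\M$-growth, which is exactly~(\ref{equacotasE}). (The matching upper bound $m_{e_V}(n)\le c_2k_2^nM_n$ comes out the same way via~(\ref{equacotasintegrhM}), re-proving Proposition~\ref{mequivm} for this particular $e_V$, but it is not needed here.)

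The step I expect to be the main obstacle is (\textsc{vi}). The estimate~(\ref{equacotasE}) is of no use there, since it grows with the argument of $E$; one must instead show that $E$ decays in the complementary sector, namely $|E(w)|\le C/|w|$ for $|w|$ large in every proper subsector of $S(\pi,2-\o(\M))$. This is the exact analogue of the classical decay of the Mittag--Leffler functions $E_\a$ (with $\a<2$) outside their sector of exponential growth, and I would prove it in the same way: represent $E$ by a Mellin--Barnes type contour integral built from $1/m_{e_V}$ --- the needed decay of $m_{e_V}(\sigma+it)$ in $t$ following from the holomorphy of $e_V$ in $S_{\o(\M)}$ and from the bounds on $V$ obtained above --- and deform the contour into the half-plane where the factor $(-w)^{-s}$ decays, along the lines of~\cite[Section~5.5]{balserutx} and~\cite{sanz13}. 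Once $|E(w)|\le C/|w|$ is available on proper subsectors of $S(\pi,2-\o(\M))$, condition (\textsc{vi}) follows as in (\textsc{ii}): for $z$ in a suitable neighborhood $U$ of a point of $S(\pi,2-\o(\M))$ and $t\to0$, $z/t$ stays in a fixed proper subsector and $|E(z/t)|\le Ct/|z|$, so $\int_0^{t_0}t^{-1}\sup_{z\in U}|E(z/t)|\,dt<\infty$. Assembling (\textsc{i})--(\textsc{vi}) then shows that $e_V$ is a kernel of $\M$-summability.
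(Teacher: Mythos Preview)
Your verification of (\textsc{i})--(\textsc{iv}) is correct and essentially matches the paper: the paper simply remarks that (\textsc{i}) and (\textsc{iv}) are immediate and that (\textsc{ii}) and (\textsc{iii}) were obtained in~\cite[Lemma~5.3]{sanz13} as consequences of Maergoiz's Property~2.9 (Proposition~\ref{propcotaVpartereal} in the paper), namely $\Re V(z)\ge bV(|z|)$ on proper subsectors for $|z|$ large. What you do for (\textsc{iii}) is, in effect, a direct proof of that very property via Theorem~\ref{propanalproxorde}(i), followed by the same reduction to the $h_{\M}$-bound; so for (\textsc{i})--(\textsc{iv}) the two approaches coincide up to the level of citation versus reproving.

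The genuine divergence is in (\textsc{v}) and (\textsc{vi}). For (\textsc{v}), your elementary lower bound on $m_{e_V}(n)$ via the interval $[m_n/2,m_n]$ and the estimate $V(m_n)\le 2M(m_n)\le 4n\log A$ is a valid and self-contained route to~(\ref{equacotasE}) through Komatsu's Proposition~\ref{propKomatsu}. The paper instead invokes a theorem of Maergoiz (\cite[Thm.~3.3]{Maergoiz}) asserting directly that $E_V$ has proximate order $d_0(r)=\log(V(r))/\log(r)$, which combined with Remark~\ref{remaordenaproxequiv} gives~(\ref{equacotasE}) at once. Your argument buys independence from that outside result; the paper's buys brevity.

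For (\textsc{vi}), however, your proposal remains a sketch: you announce a Mellin--Barnes representation of $E$ and a contour deformation, but the required vertical-line decay of $1/m_{e_V}(\sigma+it)$ and the actual deformation are not carried out, and these are nontrivial for general $V$. The paper bypasses all of this by quoting another result of Maergoiz (\cite[(3.25)]{Maergoiz}, stated as Proposition~\ref{propcotaVsectorizquier}), which gives directly $E_V(z)=O(1/|z|)$ uniformly on $\frac{\pi}{2\rho}+\varepsilon\le|\arg z|\le\pi$; condition (\textsc{vi}) is then immediate. If you want a complete proof along your lines, you would need to supply the Mellin--Barnes analysis in full --- or, more economically, simply cite Maergoiz as the paper does.
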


\begin{proof1}
Since $V$ is holomorphic in $S_{2\omega(\M)}$ and real in $(0,\infty)$, the same is true for $e_V$, so that (\textsc{i}) and (\textsc{iv}) in Definition~\ref{defikernelMsumm} hold. Properties (\textsc{ii}) and (\textsc{iii}) in that Definition have been obtained, with a slight modification in the first case, in Lemma 5.3 of~\cite{sanz13}, as a consequence of the following result.

\begin{prop}[\cite{Maergoiz}, Property\ 2.9]\label{propcotaVpartereal}
 Let $\ro>0$, $\ro(r)$ a proximate order with $\ro(r)\to\ro$, $\ga\ge 2/\ro$ and $V\in \mathfrak{B}(\ga, \ro(r))$. Then, for every $\a\in(0,1/\ro)$ there exist constants $b>0$ and $R_0>0$ such that
 \begin{equation*}
  \Re(V(z)) \ge b V(|z|), \quad  z\in S_{\a},\ |z|\ge R_0.
 \end{equation*}
 \end{prop}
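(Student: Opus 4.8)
The plan is to exploit the asymptotic homogeneity of $V$ recorded in Theorem~\ref{propanalproxorde}(i), namely that $V(rw)/V(r)\to w^{\ro}$ as $r\to\infty$ uniformly on compact subsets of $S_\ga$. The key observation is that, when $z$ lies in the narrow sector $S_{\a}$, the limiting value $w^{\ro}$ for $w=z/|z|$ has strictly positive real part, and the uniformity of the convergence lets one transfer this positivity to $V$ itself once $|z|$ is large.

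Concretely, I would write any $z\in S_{\a}$ as $z=rw$ with $r=|z|>0$ and $w=e^{i\theta}$, $|\theta|<\a\pi/2$, so that $w$ lies in the compact arc $K:=\{e^{i\theta}:|\theta|\le\a\pi/2\}$. Since $\a<1/\ro\le\ga/2<\ga$ (the first inequality being the hypothesis $\a\in(0,1/\ro)$ and the second $\ga\ge 2/\ro$), we have $K\subset S_\ga$, and Theorem~\ref{propanalproxorde}(i) gives $V(rw)/V(r)\to w^{\ro}$ uniformly for $w\in K$ as $r\to\infty$. Moreover, for $w=e^{i\theta}\in K$ one has $\Re(w^{\ro})=\cos(\ro\theta)\ge\cos(\ro\a\pi/2)$, and this lower bound is strictly positive precisely because $\a\in(0,1/\ro)$ forces $\ro\a\pi/2<\pi/2$; put $b:=\tfrac12\cos(\ro\a\pi/2)>0$.

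By the uniform convergence I would then fix $R_0>0$ such that $|V(rw)/V(r)-w^{\ro}|<b$ for all $r\ge R_0$ and all $w\in K$; taking real parts yields $\Re\big(V(rw)/V(r)\big)>\Re(w^{\ro})-b\ge b$. Since $V(r)>0$ for $r>0$ by Theorem~\ref{propanalproxorde}(iii), multiplying through by $V(r)=V(|z|)$ gives $\Re(V(z))\ge b\,V(|z|)$ for every $z\in S_{\a}$ with $|z|\ge R_0$, which is the assertion. The only point needing a little care is that $\{z\in S_{\a}:|z|\ge R_0\}$ is not compact; this is circumvented exactly as above by factoring out the modulus $r=|z|$ and applying the uniform convergence on the compact angular arc $K$, together with the fact that $V$ is real and positive on $(0,\infty)$ so that dividing by $V(r)$ is legitimate. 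I do not anticipate any substantial obstacle beyond this bookkeeping.
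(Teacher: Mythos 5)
Your argument is correct. Note, however, that the paper does not prove this statement at all: it is imported verbatim from Maergoiz (Property~2.9 of that reference), so there is no internal proof to compare against. What you have done is supply a self-contained derivation from the other quoted property, Theorem~\ref{propanalproxorde}(i): writing $z=rw$ with $r=|z|$ and $w$ on the compact arc $K=\{e^{i\theta}:|\theta|\le\a\pi/2\}\subset S_\ga$ (which uses $\a<1/\ro\le\ga/2<\ga$), the uniform convergence $V(rw)/V(r)\to w^{\ro}$ on $K$, the strict positivity $\Re(w^{\ro})=\cos(\ro\theta)\ge\cos(\ro\a\pi/2)>0$ coming from $\ro\a\pi/2<\pi/2$, and the positivity of $V$ on $(0,\infty)$ combine exactly as you say to give $\Re(V(z))\ge b\,V(|z|)$ with $b=\tfrac12\cos(\ro\a\pi/2)$ for $|z|\ge R_0$. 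The factoring out of the modulus to reduce the non-compact region $\{z\in S_\a:|z|\ge R_0\}$ to the compact arc $K$ is precisely the right device, and this is essentially the standard proof of the cited property. Your version has the minor added benefit of making the proposition a consequence of the properties of the class $\mathfrak{B}(\ga,\ro(r))$ already stated in the paper, rather than an additional external import.
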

Note that, since $d(r)$ is a proximate order and, by (\ref{equaordeMdet}) and~(\ref{equaordequasM}), we have that
$$
\lim_{r\to\infty}d(r)=\ro[M]=\frac{1}{\omega(\M)},
$$
we may apply Proposition~\ref{propcotaVpartereal} with $\ro=1/\omega(\M)$,
$\ro(r)=d(r)$ and $\ga=2\omega(\M)$.

Then, the moment function associated with $e_V$,
$$m_V(\lambda):=\int_{0}^{\infty}t^{\lambda-1}e_{V}(t)dt=
\int_{0}^{\infty}t^{\lambda}e^{-V(t)}dt,$$
is well defined in $\{\Re(\lambda)\ge0\}$, continuous in its domain and holomorphic in $\{\Re(\lambda)>0\}$; clearly, $m_V(x)>0$ for every $x\ge0$. Moreover, we have the following result.
%


\begin{prop}[\cite{Maergoiz}, Thm.\ 3.3]
The function
\begin{equation*}
E_V(z)=\sum_{n=0}^\infty \frac{z^n}{m_V(n)},\qquad z \in \C,
\end{equation*}
is entire and of proximate order $d_0(r)=\log(V(r))/\log(r)$.
\end{prop}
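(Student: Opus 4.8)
The plan is to prove the two assertions separately. Recall that, by the definition of ``proximate order of a function'' used above, saying that $E_V$ is of proximate order $d_0(r)$ means that $d_0(r)$ is a proximate order and that the type $\sigma:=\limsup_{r\to\infty}\log M_{E_V}(r)/r^{d_0(r)}$, with $M_{E_V}(r):=\max_{|z|=r}|E_V(z)|$, is finite and strictly positive; note that $r^{d_0(r)}=V(r)$. That $d_0(r)$ is a proximate order, indeed one equivalent to $d(r)$ (so that $V(r)/M(r)\to1$ by the definition of equivalence and Remark~\ref{remaordenaproxequiv}), is already contained in Theorem~\ref{propanalproxorde}(vi), applied with $\ro(r)=d(r)$, $\ro=1/\o(\M)$ and $\ga=2\o(\M)$; these choices are admissible because $d(r)\to\ro[M]=1/\o(\M)$ by~(\ref{equaordeMdet})--(\ref{equaordequasM}). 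That $E_V$ is entire is elementary: since $V$ is increasing on $(0,\infty)$ by Theorem~\ref{propanalproxorde}(iii), for every fixed $R>0$ one has $m_V(n)=\int_0^\infty t^{n}e^{-V(t)}\,dt\ge R^{n}e^{-V(R+1)}$, so $m_V(n)^{1/n}\ge Re^{-V(R+1)/n}\to R$; letting $R\to\infty$ yields $m_V(n)^{1/n}\to\infty$ and hence the series defining $E_V$ has infinite radius of convergence.

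Next I would estimate $\sigma$, passing to the maximum term. Put $\mu(r):=\sup_{n\ge0}r^{n}/m_V(n)$ and $N(r):=\log\mu(r)=\sup_{n\ge0}\bigl(n\log r-\log m_V(n)\bigr)$. Since the coefficients of $E_V$ are positive, $M_{E_V}(r)\ge E_V(r)\ge\mu(r)$, hence $\log M_{E_V}(r)\ge N(r)$; conversely $M_{E_V}(r)\le\sum_{n}r^{n}/m_V(n)\le\mu(2r)\sum_{n}2^{-n}=2\mu(2r)$, hence $\log M_{E_V}(r)\le\log2+N(2r)$. Thus $\liminf_{r\to\infty}N(r)/V(r)\le\sigma\le\limsup_{r\to\infty}N(2r)/V(r)$, and since $V$ obeys the scaling $V(2r)\asymp V(r)$ as $r\to\infty$ — a standard property of functions $r^{\ro(r)}$ with $\ro(r)$ a proximate order tending to some $\ro>0$, obtained by integrating $\frac{d}{du}\log V(u)=d_0'(u)\log u+d_0(u)/u$ over $[r,2r]$ and using $u\,d_0'(u)\log u\to0$, $d_0(u)\to 1/\o(\M)$ — the problem reduces to proving that $N(r)$ and $V(r)$ are comparable as $r\to\infty$.

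Because $V(r)/M(r)\to1$, it is enough to show $N(r)\asymp M(r)$; and since $N(r)=\sup_n(n\log r-\log m_V(n))$ while $M(r)=\sup_n(n\log r-\log M_n)$, this follows from the geometric two-sided comparison $L^{n}M_n\le m_V(n)\le H^{n}M_n$ ($0<L,H<\infty$), which gives $M(r/H)\le N(r)\le M(r/L)$ and then $N(r)\asymp M(r)$ by the same proximate-order scaling applied to $M(r)=r^{d(r)}$. This comparison is precisely the equivalence of the sequences $\M$ and $\mathfrak{m}_V$ (cf.\ Proposition~\ref{mequivm} and Proposition~5.7 of~\cite{sanz13}). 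Its upper half is immediate from estimate~(\ref{equacotasnucleoe}) of Definition~\ref{defikernelMsumm}(\textsc{iii}), already verified for $e_V$: it yields $e^{-V(t)}=\o(\M)\,t^{-1}|e_V(t)|\le\o(\M)c\,t^{-1}h_{\bM}(k/t)$, whence, by~(\ref{equacotasintegrhM}) with $p=n$,
$$
m_V(n)=\int_0^\infty t^{n}e^{-V(t)}\,dt\le\o(\M)c\int_0^\infty t^{n-1}h_{\bM}(k/t)\,dt\le\o(\M)c\,CD^{n}M_n .
$$
The lower half, $m_V(n)\ge L^{n}M_n$, is the delicate point and the one I expect to be the main obstacle: using $V(t)\le CM(t)$ for large $t$ (a consequence of $V/M\to1$) one gets $e^{-V(t)}\ge h_{\bM}(1/t)^{C}\ge h_{\bM}\bigl(1/(\rho(\lceil C\rceil)\,t)\bigr)$ by~(\ref{e120}) — here $h_{\bM}\le1$ is used to replace the exponent $C$ by $\lceil C\rceil$ — and then $\int_0^\infty t^{n}h_{\bM}(1/(\rho(\lceil C\rceil)t))\,dt$ is bounded below by restricting the integration to an interval of length of order $m_n$ around $t\sim m_n$, on which, using $h_{\bM}(1/m_n)=M_n/m_n^{n}$ and once more~(\ref{e120}), the integrand exceeds a geometric multiple of $M_n$; this is exactly the step where the moderate growth of $\M$, through~(\ref{e120}), is essential. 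With the comparison in hand, $N(r)\asymp M(r)\asymp V(r)$, so $0<\sigma<\infty$, completing the proof.
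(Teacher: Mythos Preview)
The paper does not prove this proposition at all: it is quoted as Theorem~3.3 of Maergoiz~\cite{Maergoiz}, where it is established in the general setting of an arbitrary proximate order $\rho(r)\to\rho>0$ and any $V\in\mathfrak{B}(\gamma,\rho(r))$, with no strongly regular sequence $\M$ in sight. Your argument is essentially correct as a sketch, but it takes a genuinely different route, one tied to the specific context of the present paper: you exploit the equivalence $d_0(r)\sim d(r)$ from Theorem~\ref{propanalproxorde}(vi) to get $V(r)/M(r)\to1$, and then reduce the type estimate to the two-sided comparison $L^nM_n\le m_V(n)\le H^nM_n$, i.e.\ the equivalence of $\M$ and $\mathfrak{m}_V$. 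You are right to prove this equivalence directly rather than invoking Proposition~\ref{mequivm}, since that proposition (via the argument in Proposition~5.7 of~\cite{sanz13}) uses both~(\ref{equacotasnucleoe}) and~(\ref{equacotasE}), and~(\ref{equacotasE}) is precisely condition~(\textsc{v}) that the paper is in the process of deducing \emph{from} the present proposition; your upper half via~(\textsc{iii}) and~(\ref{equacotasintegrhM}), and your lower half via $V\le CM$, $h_{\M}\le1$, and~(\ref{e120}) followed by restriction of the integral near $t\sim m_n/\rho(\lceil C\rceil)$, are both sound. The trade-off is clear: your approach stays entirely within the toolbox already assembled in the paper (the functions $h_{\M}$ and $M$, the moderate-growth inequality~(\ref{e120})), at the price of being non-transferable to Maergoiz's general statement; the paper instead imports the stronger external result and uses it as a black box.
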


According to Remark~\ref{remacotasproximorder}, from this fact we deduce that there exists a constant $K_1>0$ such that for every $z\in\C$ one has
$$|E_V(z)|\le \exp(K_1V(|z|)).$$
Since $d_0(r)$ is a proximate order
equivalent to $d(r)=\log(M(r))/\log(r)$, by Remark~\ref{remaordenaproxequiv}
we have
\begin{equation*}
|E_V(z)|\le \tilde{C}\exp(\tilde{K}M(|z|))
\end{equation*}
for every $z\in\C$ and suitably large constants $\tilde{C},\tilde{K}>0$, and so condition (\textsc{v}) in Definition~\ref{defikernelMsumm} is satisfied. Finally, we take into account the following.

\begin{prop}[\cite{Maergoiz}, (3.25)]\label{propcotaVsectorizquier}
Let $\ro(r)$ be a proximate order with $\ro>1/2$, $\ga\ge 2/\ro$ and $V\in\mathfrak{B}(\ga,\ro(r))$. Then, for every $\varepsilon>0$ such that $\varepsilon<\pi(1-1/(2\ro))$ we have, uniformly as $|z|\to \infty$, that (in Landau's notation)
\begin{equation*}
  E_V(z)=
  O\left(\frac{1}{|z|}\right),\qquad   \frac{\pi}{2\ro}+\varepsilon\le |\arg z| \le \pi.
 \end{equation*}
\end{prop}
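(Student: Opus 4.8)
The plan is to adapt, to the setting of a general analytic proximate order, the classical contour–integral derivation of the asymptotics of the Mittag--Leffler function $E_{\a}(z)=\sum_{n\ge0}z^{n}/\Gamma(1+\a n)$, which is the case $V(z)=z^{1/\a}$, $\ro=1/\a$: there the excluded sector $|\arg z|<\pi/(2\ro)=\pi\a/2$ is precisely the region in which $E_{\a}$ grows exponentially, while for $|\arg z|>\pi\a/2$ the classical loop representation gives $E_{\a}(z)=-z^{-1}/\Gamma(1-\a)+O(z^{-2})=O(1/|z|)$. So the first and decisive step is to produce a Hankel–type integral representation for $E_V$. Using the uniform relation $V(rw)/V(r)\to w^{\ro}$ from Theorem~\ref{propanalproxorde}(i) together with the hypothesis $\ga\ge 2/\ro$, one checks that for every $\theta$ with $\pi/(2\ro)<\theta\le\min(\pi/\ro,\ga\pi/2)$ the rays $\arg w=\pm\theta$ lie in the sector $S_\ga$ of holomorphy of $V$, and that along them $\Re V(re^{\pm i\theta})=V(r)\bigl(\cos(\ro\theta)+o(1)\bigr)$ with $\cos(\ro\theta)<0$; since $\ro_0(r)=\log V(r)/\log r\to\ro>1/2$ forces $V(r)\ge r^{\ro/2}$ eventually, the function $e^{V(w)}$ then decays at least like $\exp(-c|w|^{\ro/2})$ on these rays, and $|V'(w)|$ grows at most polynomially on them (Cauchy's estimates applied to $V(rw)/V(r)\to w^{\ro}$). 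Using this, the monotonicity and convexity properties (iii)--(v) of Theorem~\ref{propanalproxorde}, and the change of variable that straightens $V$ into $z\mapsto z^{\ro}$ asymptotically, one establishes — exactly as the classical loop formula for $1/\Gamma$ is obtained — a representation of the shape
\begin{equation*}
E_V(z)=\frac{1}{2\pi i}\int_{\mathcal{H}_{\theta}}\frac{\Phi_V(w)}{w-z}\,dw,\qquad |\arg z|>\theta,
\end{equation*}
where $\mathcal{H}_{\theta}$ runs in from $\infty$ along $\arg w=-\theta$, around a small arc $|w|=\varrho$, and back out to $\infty$ along $\arg w=\theta$, and $\Phi_V$ is built from $V$ (in the Gevrey model $\Phi_V(w)=\tfrac{d}{dw}e^{V(w)}=\tfrac1\a w^{1/\a-1}e^{w^{1/\a}}$); at the level of Taylor coefficients this rests on a loop–integral formula for $1/m_V(n)$ generalizing the one for $1/\Gamma(1+\a n)$, where $m_V(n)=\int_0^\infty t^{n-1}e_V(t)\,dt$.

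Granting the representation, the asymptotic estimate is elementary. Fix $z$ with $\tfrac{\pi}{2\ro}+\varepsilon\le|\arg z|\le\pi$ and $|z|$ large, and choose $\theta$ slightly larger than $\pi/(2\ro)$ and smaller than $|\arg z|$ (possible since $\varepsilon>0$), so that $|\arg z|-\theta\ge\varepsilon'>0$ and the representation of the previous step applies. Split $\mathcal{H}_{\theta}$ into the arc $|w|=\varrho$ and the two rays. On the arc the integrand is bounded and $|w-z|\ge|z|-\varrho$, so that contribution is $O(1/|z|)$ as $|z|\to\infty$. On each ray the angular separation $|\arg z-(\pm\theta)|\ge\varepsilon'$ yields $|w-z|\ge c\,|z|$ uniformly in $w$, whence
\begin{equation*}
\Bigl|\frac{1}{2\pi i}\int_{\mathrm{ray}}\frac{\Phi_V(w)}{w-z}\,dw\Bigr|\le\frac{C}{|z|}\int_{\varrho}^{\infty}|V'(re^{\pm i\theta})|\,\exp(-c\,r^{\ro/2})\,dr=\frac{C'}{|z|},
\end{equation*}
the last integral being finite by the polynomial bound on $|V'|$ on the rays. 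Adding the three pieces gives $E_V(z)=O(1/|z|)$, and every estimate above is uniform in $\arg z$ over the stated closed range and as $|z|\to\infty$, which is the assertion (in fact this argument yields a bit more than $O(1/|z|)$, that being simply the cleanest uniform statement).

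The numerical hypotheses enter only as bookkeeping: $\varepsilon<\pi(1-1/(2\ro))$ is equivalent to $\tfrac{\pi}{2\ro}+\varepsilon<\pi$, i.e.\ to the non-emptiness of the range $\tfrac{\pi}{2\ro}+\varepsilon\le|\arg z|\le\pi$, and $\ro>1/2$ is what makes $\pi(1-1/(2\ro))>0$ so that such an $\varepsilon$ exists at all; the condition $\ga\ge 2/\ro$ is what makes room in $S_\ga$ for the rays $\arg w=\pm\theta$ with $\ro\theta>\pi/2$ on which $e^{V(w)}$ decays. The genuine difficulty — which is the content of Maergoiz's argument for~\cite[(3.25)]{Maergoiz} — lies entirely in the first step: for the Gevrey model $V(z)=z^{1/\a}$ the representation is a routine change of variable in the classical loop formula for $1/\Gamma$, but for an arbitrary analytic proximate order one must combine the uniform asymptotics $V(rw)/V(r)\to w^{\ro}$, the structural properties (iii)--(vi) of Theorem~\ref{propanalproxorde}, and Proposition~\ref{propcotaVpartereal} to deform the defining integral $m_V(n)=\int_0^\infty t^{n-1}e_V(t)\,dt$ into a loop integral and to keep the resulting error terms under control uniformly in $n$, so that the term-by-term summation producing $E_V$ is legitimate. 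Once that representation is in place, the contour–shift estimate above delivers the claim.
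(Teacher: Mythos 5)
First, a point of reference: the paper does not prove this proposition at all — it is quoted directly from Maergoiz \cite{Maergoiz} (formula (3.25)) and used as a black box in the proof of Theorem~\ref{teorconstrkernels}. So there is no internal argument to measure your proposal against; it must stand on its own.

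On its own, it does not close. Your second step — splitting the Hankel contour $\mathcal{H}_{\theta}$ into the arc and the two rays, using the angular separation $|\arg z-(\pm\theta)|\ge\varepsilon'$ to get $|w-z|\ge c|z|$, and the decay $\exp(-c r^{\ro/2})$ of $e^{V}$ on the rays to make the integrals converge — is correct and routine \emph{once the representation is granted}, and your bookkeeping of the hypotheses $\ro>1/2$, $\ga\ge 2/\ro$, $\varepsilon<\pi(1-1/(2\ro))$ is accurate. But the representation $E_V(z)=\frac{1}{2\pi i}\int_{\mathcal{H}_{\theta}}\frac{\Phi_V(w)}{w-z}\,dw$ is exactly where the theorem lives, and you assert it rather than prove it. At the coefficient level it requires $\frac{1}{2\pi i}\oint_{\mathcal{H}_{\theta}}\Phi_V(w)\,w^{-n-1}\,dw=1/m_V(n)$ for a single kernel $\Phi_V$ independent of $n$ (with enough uniformity in $n$ to legitimize termwise summation for small $|z|$ and then analytic continuation). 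In the model $V(z)=z^{1/\a}$ this identity is Hankel's loop formula for $1/\Gamma$, i.e.\ it rests on the reflection-type miracle $\Gamma(s)\cdot\frac{1}{2\pi i}\oint e^{w}w^{-s}\,dw=1$, which is a special property of the Gamma function. For a general analytic proximate order there is no reason whatsoever that the real-line moment $m_V(n)=\int_0^\infty t^{n}e^{-V(t)}\,dt$ and any loop integral of a fixed $\Phi_V$ are \emph{exact} reciprocals; what Maergoiz actually does is build an approximate representation and control the error terms uniformly in $n$ using the full strength of Theorem~\ref{propanalproxorde}(i)--(vi) and Proposition~\ref{propcotaVpartereal}. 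You name this as ``the genuine difficulty'' and then leave it unexecuted, so the proposal amounts to a correct reduction of the estimate to an unproved integral representation — the entire analytic content of \cite[(3.25)]{Maergoiz} is still missing.
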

This information easily implies that also condition (\textsc{vi})  in Definition~\ref{defikernelMsumm} is fulfilled, what concludes the proof.
\end{proof1}

\begin{rema}
In case $\omega(\M)\ge 2$, we consider $s>0$ and $\M^{(1/s)}:=(M_n^{1/s})_{n\in\N_0}$ as in Remark~\ref{remadefinucleos}(iii), in such a way that $\omega(\M^{(1/s)})=\omega(\M)/s<2$.
With obvious notation, we have that $d^{(1/s)}(r)=sd(r^{s})-\log(s)/\log(r)$ for $r$ large enough, and
$$
r(d^{(1/s)})'(r)\log(r)=sr^{s}d'(r^{s})\log(r^{s})+\frac{\log(s)}{\log(r)}
$$
whenever both sides are defined. So, it is clear that $d(r)$ is a proximate order if, and only if, $d^{(1/s)}(r)$ is. Were this the case, by the previous result we would have kernels $\tilde{e}$ for $\M^{(1/s)}-$sum\-mab\-ility, and the function $e(z)=\tilde{e}(z^{1/s})/s$ will be a kernel for $\M-$summability.
\end{rema}

Regarding the question of whether $d(r)$ is a proximate order or not, we have the following characterization and result.

\begin{prop}[\cite{sanz13}, Prop.\ 4.9]\label{propcaracdderordenaprox}
Let $\M$ be a strongly regular sequence, and $d(r)$ its associated function. The following are equivalent:
\begin{itemize}
\item[(i)] $d(r)$ is a proximate order,
\item[(ii)] $\lim_{p\to\infty} m_pd'(m_p^+)\log(m_p)=0$,
\item[(iii)] $\displaystyle\lim_{p\to\infty}\frac{p+1}{M(m_{p})}=\frac{1}{\omega(\M)}=\ro[M]$.
\end{itemize}
\end{prop}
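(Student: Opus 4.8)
The plan is to reduce everything to the explicit piecewise description of $M(r)$ together with an elementary computation of $d'(r)$. Recall from the formula for $h_{\bM}$ that $(m_p)_{p\ge0}$ increases to $\infty$ and that, for $r\in(m_{p-1},m_p]$ with $p\ge1$, one has $M(r)=p\log r-\log M_p$; hence $M$ is continuous, strictly increasing to $\infty$, smooth on each interval $(m_{p-1},m_p)$ with $rM'(r)=p$ there, while at $r=m_p$ the one-sided derivative from the right satisfies $m_pM'(m_p^+)=p+1$. Consequently $d(r)=\log(M(r))/\log(r)$ is, for $r$ large enough, continuous, piecewise continuously differentiable and nonnegative, and by~(\ref{equaordeMdet}) it tends to $\rho[M]\in(0,\infty)$. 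Thus the first three requirements in the definition of a proximate order hold automatically, and statement~(i) is equivalent to the single condition $\lim_{r\to\infty}rd'(r)\log r=0$.

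Next I would use the quotient rule to obtain the identity
$$
rd'(r)\log r=\frac{rM'(r)}{M(r)}-d(r),
$$
valid wherever $M$ is differentiable. Since $d(r)\to\rho[M]$, the relation $rd'(r)\log r\to0$ is equivalent to $rM'(r)/M(r)\to\rho[M]$. On $(m_{p-1},m_p)$ the quantity $rM'(r)/M(r)$ equals $p/M(r)$, which decreases from $p/M(m_{p-1})$ down to $p/M(m_p)$; at $r=m_p^+$ it jumps to $(p+1)/M(m_p)$. Because $M(m_p)\to\infty$ we have $(p+1)/M(m_p)-p/M(m_p)=1/M(m_p)\to0$, and reindexing $p\mapsto p-1$ shows that $p/M(m_p)$, $(p+1)/M(m_p)$ and $p/M(m_{p-1})$ all have the same limiting behaviour. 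Therefore $rM'(r)/M(r)\to\rho[M]$ along the whole ray if and only if $(p+1)/M(m_p)\to\rho[M]$, which is exactly statement~(iii); combined with the previous paragraph this yields (i)$\,\Leftrightarrow\,$(iii). To pass from convergence along $(m_p)$ to convergence as $r\to\infty$ through all values one only needs that, once the endpoint values $p/M(m_{p-1})$ and $p/M(m_p)$ are close to $\rho[M]$, so is $p/M(r)$ uniformly on $(m_{p-1},m_p)$ by monotonicity, while $d(r)$ is uniformly close to $\rho[M]$ for large $r$ by~(\ref{equaordeMdet}).

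Finally, the equivalence (ii)$\,\Leftrightarrow\,$(iii) comes from the same identity evaluated at $r=m_p$ with the right derivative: $m_pd'(m_p^+)\log m_p=(p+1)/M(m_p)-d(m_p)$, and since $d(m_p)\to\rho[M]$ the left-hand side tends to $0$ precisely when $(p+1)/M(m_p)\to\rho[M]$. This closes the cycle (i)$\,\Leftrightarrow\,$(ii)$\,\Leftrightarrow\,$(iii). The only point demanding any care, beyond bookkeeping with one-sided derivatives at the corners $m_p$, is precisely that verification that condition~(iv) of the definition of a proximate order holds as $r\to\infty$ through all values and not merely along the sequence $(m_p)$; but this follows at once from the piecewise monotonicity of $rM'(r)/M(r)$ on each $(m_{p-1},m_p)$ and the already-known limit $d(r)\to\rho[M]$. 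No input deeper than~(\ref{equaordeMdet}) and the explicit form of $M$ is required.
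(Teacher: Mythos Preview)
The paper does not actually prove this proposition: it is quoted verbatim from \cite{sanz13} (as ``Prop.\ 4.9'' there) and no argument is supplied here, so there is nothing in the present paper to compare your proof against.

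That said, your argument is correct and is essentially the natural one. The key points --- the explicit piecewise formula $M(r)=p\log r-\log M_p$ on $(m_{p-1},m_p]$, the identity $rd'(r)\log r=rM'(r)/M(r)-d(r)$, and the observation that $rM'(r)/M(r)=p/M(r)$ is monotone on each piece so that the full limit is governed by the endpoint values $p/M(m_{p-1})$ and $(p+1)/M(m_p)$ --- are exactly what one expects, and your handling of the one-sided derivatives at the corners $m_p$ is fine. The reduction of the equivalence (ii)$\,\Leftrightarrow\,$(iii) to the same identity evaluated at $r=m_p^+$ is clean. No deeper input than~(\ref{equaordeMdet}) and the explicit form of $M$ is needed, as you note, and this matches the spirit of how the result is treated in \cite{sanz13}.
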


%
%
%
%


\begin{coro}[\cite{sanz13}, Corollary\ 4.10]
If
\begin{equation}\label{equacondordenaprox}
\displaystyle\lim_{p\to\infty}p\log\big(\frac{m_{p+1}}{m_{p}}\big) \textrm{ exists (finite or not),}
\end{equation}
then its value is \emph{a fortiori} $\omega(\M)$, $d(r)$ is a proximate order and, moreover, $$\omega(\M)=\lim_{p\to\infty}\frac{\log(m_p)}{\log(p)}\qquad\textrm{ (instead of $\displaystyle\liminf_{p\to\infty}$, see~(\ref{equaordequasM}))}.
$$
\end{coro}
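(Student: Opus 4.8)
The goal is to show that the condition $\lim_{p\to\infty}p\log(m_{p+1}/m_p)$ existing (in $[0,\infty]$) forces: (a) its value is $\omega(\M)$, (b) $d(r)$ is a proximate order, and (c) $\omega(\M)=\lim_{p\to\infty}\log(m_p)/\log(p)$. The natural route is to deduce statement (ii) of Proposition~\ref{propcaracdderordenaprox} from condition (\ref{equacondordenaprox}), since $d(r)$ being a proximate order is equivalent to $\lim_{p\to\infty}m_pd'(m_p^+)\log(m_p)=0$. To this end I would first compute $d'(r)$ explicitly on the interval $(m_{p-1},m_p)$: from (\ref{equadefiMdet}), on the relevant range $M(r)=\log(r^{p}/M_p)=p\log r-\log M_p$ for $r\in[m_{p-1},m_p)$, hence $d(r)=\log(M(r))/\log(r)$ and a direct differentiation gives, for $r$ in the open interval,
\begin{equation*}
d'(r)=\frac{1}{\log r}\cdot\frac{p}{r\,M(r)}-\frac{\log(M(r))}{r(\log r)^2}.
\end{equation*}
Evaluating at $r=m_p^+$ and multiplying by $m_p\log(m_p)$ yields
\begin{equation*}
m_pd'(m_p^+)\log(m_p)=\frac{p}{M(m_p)}-\frac{\log(M(m_p))}{\log(m_p)}=\frac{p}{M(m_p)}-d(m_p).
\end{equation*}
Since $d(m_p)\to 1/\omega(\M)=\rho[M]$ always (by (\ref{equaordeMdet}), (\ref{equaordequasM}) and continuity/monotonicity of $M$), condition (ii) of Proposition~\ref{propcaracdderordenaprox} is equivalent to $\lim_{p\to\infty} p/M(m_p)=\rho[M]$, which is exactly condition (iii) there; so really I just need to verify (iii), i.e. $\lim_{p\to\infty}(p+1)/M(m_p)=\rho[M]$ (the shift by $1$ is harmless).

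**Reducing (iii) to the hypothesis.** The key elementary computation is to express $M(m_p)$ as a telescoping sum. From the description of $h_{\bM}$ (equivalently of $M$) one has $M(m_p)=\sum_{j=0}^{p}\log(m_p/m_j)$ — indeed $M$ is the Legendre-type transform and on consecutive quotient-intervals $M$ increases by $\log r$ integrated, giving $M(m_p)-M(m_{p-1})=p\log(m_p)-\log M_p-\big((p-1)\log(m_{p-1})-\log M_{p-1}\big)$, and using $M_p/M_{p-1}=m_{p-1}$ this simplifies to $M(m_p)-M(m_{p-1})=p\log(m_p/m_{p-1})$; summing and using $M(m_0)=\log(m_0/m_0)=0$ after adjusting the base case gives $M(m_p)=\sum_{j=1}^{p} j\log(m_j/m_{j-1})$, which can be rewritten by Abel summation as $M(m_p)=\sum_{j=0}^{p}\log(m_p/m_j)$. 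Now set $a_j:=j\log(m_j/m_{j-1})=j\log(m_j)-j\log(m_{j-1})$; a further index manipulation shows $a_j = j\log(m_{j}/m_{j-1})$ and the hypothesis (\ref{equacondordenaprox}) says precisely that $b_p:=p\log(m_{p+1}/m_p)$ has a limit $\ell\in[0,\infty]$. The relation between $a_j$ and $b_j$ is $a_{j}=(j)\log(m_j/m_{j-1})$ while $b_{j-1}=(j-1)\log(m_j/m_{j-1})$, so $a_j-b_{j-1}=\log(m_j/m_{j-1})\to 0$ (the quotients tend to $1$ since, by logarithmic convexity and the standing properties, $\log(m_j/m_{j-1})\le \tfrac{1}{j}b_{j-1}+o(1)$ is summable-in-average to zero); hence $a_j$ has the same Cesàro behaviour as $b_j$. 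Then $M(m_p)=\sum_{j\le p} a_j$, and by the Stolz–Cesàro theorem $\lim_p M(m_p)/p=\lim_p a_p=\ell$ when $\ell$ is finite, whereas if $\ell=\infty$ then $M(m_p)/p\to\infty$ too. Combining with $M(m_p)/p\to 1/\rho[M]$ forces $\ell=1/\rho[M]\cdot$(something) — more precisely, $\lim_p p/M(m_p)=1/\ell$, and since this limit must equal $\rho[M]$ we get $\ell=1/\rho[M]=\omega(\M)$, proving (a); and the existence of the limit gives condition (iii), hence $d(r)$ is a proximate order, proving (b).

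**The improvement in (c).** For the last assertion, $\omega(\M)=\lim_p\log(m_p)/\log(p)$, I would use $\log(m_p)=\sum_{j=1}^{p}\log(m_j/m_{j-1})$ up to the bounded term $\log m_0$, combined with $a_j=j\log(m_j/m_{j-1})$, so $\log(m_p)=\sum_{j=1}^p a_j/j + O(1)$. Since $a_j\to\omega(\M)$ (established above, assuming first $\omega(\M)<\infty$; the case $\omega(\M)=\infty$ is handled similarly and is excluded anyway for strongly regular sequences by (\ref{equaordeMdet})), a standard Abel/Kronecker summation gives $\sum_{j=1}^p a_j/j = \omega(\M)\log p + o(\log p)$, whence $\log(m_p)/\log(p)\to\omega(\M)$; this is a genuine limit, upgrading the $\liminf$ in (\ref{equaordequasM}).

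**Main obstacle.** The delicate point is the bookkeeping around the possibly infinite limit $\ell=\infty$ and the passage through Stolz–Cesàro: one must be careful that $M(m_p)$ is exactly the partial sum $\sum a_j$ (the telescoping identity $M(m_p)-M(m_{p-1})=p\log(m_p/m_{p-1})$ must be checked cleanly from the piecewise formula for $h_{\bM}$, watching the boundary index $p=1$ and the constant $M_0=1$), and that replacing $a_j$ by $b_{j-1}$ in Cesàro mean is legitimate — i.e. that $\log(m_j/m_{j-1})\to 0$, which follows since $m_j\to\infty$ logarithmically convexly is not by itself enough, but $\tfrac1N\sum_{j\le N}\log(m_j/m_{j-1})=\tfrac1N\log(m_N/m_0)\to 0$ would need $\log m_N = o(N)$; this last is true because $M(m_p)/p$ converges, forcing $\log m_p = o(p)$ via the identity above. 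So the real work is entirely in these asymptotic-summation lemmas; everything else is a direct appeal to Proposition~\ref{propcaracdderordenaprox} and to (\ref{equaordeMdet}).
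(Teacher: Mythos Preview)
The paper does not give a proof of this corollary: it is quoted verbatim from \cite{sanz13}, Corollary~4.10, and no argument is supplied in the present paper. So there is nothing to compare your attempt against here; I will assess the attempt on its own.

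Your outline has the right ingredients (the telescoping identity $M(m_p)-M(m_{p-1})=p\log(m_p/m_{p-1})$, the Ces\`aro/Stolz step, the Abel-type summation for $\log m_p$), but the order of the argument is circular at the crucial identification step. After showing $M(m_p)/p\to\ell$ via Stolz--Ces\`aro you write ``combining with $M(m_p)/p\to 1/\rho[M]$'' and ``since this limit must equal $\rho[M]$''---but neither is known at that point: the first is exactly condition (iii) of Proposition~\ref{propcaracdderordenaprox}, which is what you are trying to prove, and the second is never justified. As written, (a) and (b) hang on an unproved assertion.

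The fix is simple: do your part (c) \emph{first}, with $\ell$ in place of $\omega(\M)$. From $a_j:=j\log(m_j/m_{j-1})\to\ell$ and $\log m_p=\log m_0+\sum_{j\le p}a_j/j$ you get $\log m_p/\log p\to\ell$. Now invoke (\ref{equaordeMdet}), which gives $\rho[M]=\limsup_{n}\log n/\log m_n$; if the full limit of $\log m_p/\log p$ exists and equals $\ell$, then $\limsup\log p/\log m_p=1/\ell$, hence $\ell=1/\rho[M]=\omega(\M)$. This proves (a) and (c), and then $M(m_p)/p\to\ell=\omega(\M)$ yields $(p+1)/M(m_p)\to\rho[M]$, i.e.\ condition (iii), hence (b). The extreme cases $\ell=0$ and $\ell=\infty$ are then ruled out a posteriori by $\rho[M]\in(0,\infty)$. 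Two small corrections: at $r=m_p^{+}$ you are in the piece where $M(r)=(p+1)\log r-\log M_{p+1}$, so the numerator is $p+1$, not $p$ (harmless asymptotically but worth writing correctly); and the claim $\log(m_j/m_{j-1})\to 0$ follows directly from $b_{j-1}/(j-1)\to 0$ when $\ell<\infty$, so your detour through Ces\`aro means there is unnecessary.
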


\begin{rema}\label{remacomentdderordenaprox}
\begin{itemize}
\item[(i)] The previous condition~(\ref{equacondordenaprox}) holds for every sequence $\M_{\a,\beta}$, so that in any of these cases $d(r)$ is a proximate order and it is possible to construct kernels. Indeed, we have not been able yet to provide an example of a strongly regular sequence for which $d(r)$ is not a proximate order, i.e., for which condition (iii) in Proposition~\ref{propcaracdderordenaprox} does not hold.
\item[(ii)] If $\M$ is such that $d(r)$ is not a proximate order, but there exists a proximate order $\ro(r)$ and constants $A,B>0$ such that eventually $A\le (d(r)-\ro(r))\log(r)\le B$, then one may also construct kernels for $\M-$summability.
\item[(iii)] The method described in this section provides kernels, but not all. For example, for $k>0$ the function $e(z)=kz^{k}e^{-z^k}$ gives rise to the standard Laplace and Borel (with Mittag-Leffler kernel) transforms of order $k$, and it is a kernel for $\M_{1/k}-$summability. However, it does not arise from the previous construction, as it would correspond to the function $V(r)=r^k-(k-1)\log(r)$ which does not have the required properties.
\end{itemize}
\end{rema}

\section{Application to some moment-PDE}\label{sectmomentPDE}


Following the idea of W. Balser and M. Yoshino~\cite{BalserYoshino}, given a sequence of moments $\mathfrak{m}:=(m(p))_{p\in\N_0}$ let us consider the operator $\partial_{\mathfrak{m},z}$, from $\C[[z]]$ into itself, given by
\begin{equation*}
\partial_{\mathfrak{m},z}\left(\sum_{p\ge0}\frac{f_{p}}{m(p)}z^{p}\right)=\sum_{p\ge0}\frac{f_{p+1}}{m(p)}z^{p}.
\end{equation*}
S. Michalik~\cite{Michalik3} has studied the initial value problem for linear moment-partial differential equations of the form \begin{equation}\label{e7a}
P(\partial_{\mathfrak{m}_{1},t},\partial_{\mathfrak{m}_{2},z})u(t,z)=0,
\end{equation}
with given initial conditions
\begin{equation}\label{e11a}
\partial_{\mathfrak{m}_{1},t}^{j}u(0,z)=\varphi_{j}(z)\in\mathcal{O}(D),\quad j=0,\ldots,n-1,
\end{equation}
for some $n\in\N$, and some neighborhood of the origin $D$, say $D(0,r)$ for some $r>0$. Here, $P(\lambda,\xi)\in\C[[\lambda,\xi]]$ is a polynomial of degree $n$ in the variable $\lambda$, and $\mathfrak{m}_{1}=(m_1(p))_{p\in\N_0}$ and $\mathfrak{m}_{2}=(m_2(p))_{p\in\N_0}$ are given moment sequences corresponding to kernels $e_1$ and $e_2$ of orders $k_1>0$ and $k_2>0$, respectively, as defined by W. Balser in~\cite{balserutx}. In this last section we aim at stating analogous results to those in~\cite{Michalik3}, now in the case when these kernels are associated with general strongly regular sequences (which might not be equivalent to Gevrey ones). So, our setting is as described in Remark \ref{remamomentsarestronglyregular}(ii).
Although the class of linear moment-partial differential equations under study has been enlarged, the main ideas do not greatly differ from the ones in~\cite{Michalik3}, so we will omit some proofs requiring only minor modifications with respect to the ones provided in that work.

%
%
%
%

The approach in~\cite{Michalik3} is based on the reduction of the initial problem (\ref{e7a}),(\ref{e11a}) into a finite number of problems which are easier to handle. For this purpose, we put
\begin{equation}\label{e30a}
P(\lambda,\xi)=P_0(\xi)(\lambda-\lambda_{1}(\xi))^{n_{1}}\cdots(\lambda-\lambda_{\ell}(\xi))^{n_{\ell}},
\end{equation}
where $n_{1},\ldots,n_{\ell}\in\N$ with $n_1+\cdots+n_{\ell}=n$. For every $j=1,\ldots,\ell$, the function $\lambda_{j}(\xi)$ is an algebraic function, holomorphic for $|\xi|>R_0$, for some $R_0>0$, and with polynomial growth at infinity. The existence is proven of a normalised formal solution $\hat{u}$ to the main problem (\ref{e7a}),(\ref{e11a}), chosen so as to satisfy also the  equation
\begin{equation}\label{equanormalised}
(\partial_{\mathfrak{m}_1,t}-\lambda_1(\partial_{\mathfrak{m}_2,z}))^{n_1}\cdots
(\partial_{\mathfrak{m}_1,t}-\lambda_{\ell}(\partial_{\mathfrak{m}_2,z}))^{n_\ell}\hat{u}=0
\end{equation}
(the meaning of $\lambda_j(\partial_{\mathfrak{m}_2,z})$ to be specified).
Indeed, Theorem 1 in~\cite{Michalik3} states that one can recover $\hat{u}$ as
\begin{equation}\label{equasolunormalizada}
\hat{u}=\sum_{\alpha=1}^{\ell}\sum_{\beta=1}^{n_{\alpha}}\hat{u}_{\alpha\beta},
\end{equation}
$\hat{u}_{\alpha\beta}$ being the formal solution of
\begin{equation}\label{e87a}
\left\{
\begin{array}{l}
(\partial_{\mathfrak{m}_1,t}-\lambda_{\alpha}(\partial_{\mathfrak{m}_2,z}))^{\beta}\hat{u}_{\alpha\beta}=0\\
\partial_{\mathfrak{m}_1,t}^{j}\hat{u}_{\alpha\beta}(0,z)=0,\quad j=0,\ldots,\beta-2\\
\partial_{\mathfrak{m}_1,t}^{\beta-1}\hat{u}_{\alpha\beta}(0,z)=
\lambda_{\alpha}^{\beta-1}(\partial_{\mathfrak{m}_2,z})\phi_{\alpha\beta}(z),
\end{array}
\right.
\end{equation}
where $\phi_{\alpha\beta}(z):=\sum_{j=0}^{n-1}d_{\alpha\beta j}(\partial_{\mathfrak{m}_2,z})\phi_{j}(z)\in\mathcal{O}(D(0,r))$, and $d_{\alpha\beta j}(\xi)$ are holomorphic functions of polynomial growth at infinity for every $\alpha$ and $\beta$. One may easily check that the formal solution of (\ref{e87a}) is given by
\begin{equation}\label{e97a}
\hat{u}_{\alpha\beta}(t,z)=
\sum_{j=\beta-1}^{\infty}\left(\begin{array}{c}j\\\beta-1\end{array}\right)
\frac{\lambda_{\alpha}^{j}(\partial_{\mathfrak{m}_{2},z})\phi_{\alpha\beta}(z)}{m_{1}(j)}t^{j}.
\end{equation}
We do not enter into details about this point, for the proof of this result is entirely analogous in our situation. We will focus our attention on the convergence of the formal solution, and also on the growth rate of its coefficients when it has null radius of convergence, but firstly we recall the meaning of the pseudodifferential operators $\lambda(\partial_{\mathfrak{m}_e,z})$ (like the ones appearing in~(\ref{equanormalised}), (\ref{e87a}) and~(\ref{e97a})), where $\lambda(\xi)$ is an element in the set $\{\lambda_{j}: j=1,\ldots,\ell\}$ and $\mathfrak{m}_e=(m_e(p))_{p\in\N_0}$ is the strongly regular sequence of moments of a kernel $e$ with moment function $m_e(\lambda)$.
Given $r>0$, one can check (see Proposition~3 in~\cite{Michalik3}) that the differential operator $\partial_{\mathfrak{m}_e,z}$ is well-defined for any $\phi\in\mathcal{O}(D(0,r))$, and for $0<\varepsilon<r$ and every $z\in D(0,\varepsilon)$ one has
\begin{equation*}
\partial_{\mathfrak{m}_e,z}^{n}\phi(z)=\frac{1}{2\pi i}\oint_{|w|=\varepsilon}\phi(w)\int_{0}^{\infty(\theta)}\xi^nE(z\xi)\frac{e(w\xi)}{w\xi}d\xi dw,
\end{equation*}
for every $n\in\N_0$, where $\theta\in(-\arg(w)-\frac{w(\mathfrak{m}_e)\pi}{2},-\arg(w)+\frac{w(\mathfrak{m}_e)\pi}{2})$ and $E$ is the second kernel in Definition~\ref{defikernelMsumm}.
%


This expression inspires the definition of the pseudodifferential operator $\lambda(\partial_{\mathfrak{m}_e,z})$ as
$$\lambda(\partial_{\mathfrak{m}_e,z})\phi(z):=\frac{1}{2\pi i}\oint_{|w|=\varepsilon}\phi(w)\int_{\xi_0}^{\infty (\theta)}\lambda(\xi)E(\xi z)\frac{e(\xi w)}{\xi w}d\xi dw,$$
for every $\phi\in \mathcal{O}(D(0,r))$, where $\xi_0=R_0e^{i\theta}$ with suitably large $R_0>0$ and $\theta$ as before (see Definition~8 in~\cite{Michalik3}).

Next we study the growth rate of the formal solution of (\ref{e87a}), given in (\ref{e97a}).
To this end, we need the following definition and lemmas.

\begin{defi}\label{defipoleorder}
Let $U\subseteq \C$ be a neighborhood of $\infty$, and $\Psi\in\mathcal{O}(U)$. The \textit{pole order} $q\in\mathbb{Q}$ and the \textit{leading term} $\psi\in\C\setminus\{0\}$ associated with $\Psi$ are the elements satisfying
$$\lim_{z\to\infty}\frac{\Psi(z)}{z^{q}}=\psi,$$
if they exist.
\end{defi}



%
%
%
%


\begin{lemm}\label{lema57a}
Let $e$, $m_e$ and $\mathfrak{m}_e$ be as before, $\lambda(\xi)$ have pole order $q$ and leading term $\lambda_0$, and let $\phi\in\mathcal{O}(D(0,r))$. There exist $r_0,A,B>0$ such that
$$\sup_{|z|<r_0}\left|\lambda(\partial_{\mathfrak{m}_e,z})\phi(z)\right|\le |\lambda_0|A B^{q}m_e(q).$$
\end{lemm}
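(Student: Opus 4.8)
The plan is to estimate the double integral defining $\lambda(\partial_{\mathfrak{m}_e,z})\phi(z)$ directly, using the growth bounds for the kernels $e$ and $E$ from Definition~\ref{defikernelMsumm}, the pole order hypothesis on $\lambda(\xi)$, and the integral estimate~(\ref{equacotasintegrhM}). First I would fix a small $\varepsilon\in(0,r)$ and write, for $|z|<r_0$ with $r_0$ to be chosen,
$$
\lambda(\partial_{\mathfrak{m}_e,z})\phi(z)=\frac{1}{2\pi i}\oint_{|w|=\varepsilon}\phi(w)\Big(\int_{\xi_0}^{\infty(\theta)}\lambda(\xi)E(\xi z)\frac{e(\xi w)}{\xi w}\,d\xi\Big)dw,
$$
with $\theta$ chosen (depending on $\arg w$) in the admissible range and $\xi_0=R_0e^{i\theta}$. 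Since $\phi\in\mathcal{O}(D(0,r))$, $|\phi(w)|$ is bounded by some $\|\phi\|$ on $|w|=\varepsilon$; the outer contour contributes a harmless factor $\varepsilon$. The pole order hypothesis gives a constant so that $|\lambda(\xi)|\le |\lambda_0|\,C_1|\xi|^{q}$ for $|\xi|\ge R_0$ (after enlarging $R_0$ if necessary); here one must be slightly careful since $q\in\mathbb{Q}$ may be negative, but in all cases $|\lambda(\xi)|\le|\lambda_0|C_1|\xi|^{q}$ holds for large $|\xi|$ on the ray.

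The core of the argument is the estimate of the inner integral along the ray $\arg\xi=\theta$. On that ray, parametrize $\xi=te^{i\theta}$, $t\ge R_0$. Using~(\ref{equacotasE}) we bound $|E(\xi z)|\le C\big(h_{\bM}(K/(t|z|))\big)^{-1}$, and using~(\ref{equacotasnucleoe}) we bound $|e(\xi w)|\le c\,h_{\bM}(k/(t\varepsilon))$ — valid provided $\arg(\xi w)=\theta+\arg w$ stays inside a sector $S_{\omega(\M)-\delta}$, which is guaranteed by the admissible choice of $\theta$. Thus the inner integrand is controlled, up to constants, by
$$
|\xi|^{q}\,\frac{1}{|\xi w|}\,\frac{h_{\bM}(k/(t\varepsilon))}{h_{\bM}(K/(t|z|))}\cdot\frac{1}{\text{(no other factor)}}\,,
$$
i.e. by $t^{q-2}\varepsilon^{-1}h_{\bM}(k/(t\varepsilon))/h_{\bM}(K/(t|z|))$. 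The decisive point: if $r_0$ is chosen small enough that $K/|z|$ is large compared with $k/\varepsilon$ — more precisely, using the monotonicity of $h_{\bM}$ and~(\ref{e120}) with $s=2$, choosing $r_0:=k\,\rho(2)/(K\varepsilon)$ so that $\rho(2)\,k/(t\varepsilon)\le K/(t|z|)$ for all $|z|<r_0$ — then $h_{\bM}(k/(t\varepsilon))\le\big(h_{\bM}(\rho(2)k/(t\varepsilon))\big)^{2}\le h_{\bM}(k/(t\varepsilon))\cdot h_{\bM}(K/(t|z|))$, which after cancelling the denominator leaves the integrand bounded by a constant times $t^{q-2}h_{\bM}(k/(t\varepsilon))$. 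Substituting $\sigma=k/(t\varepsilon)$ (so $t^{q-2}dt$ becomes, up to the constant $(k/\varepsilon)^{q-1}$, a factor $\sigma^{-q}\,d\sigma/\sigma$ and the lower limit $t\ge R_0$ becomes an upper limit for $\sigma$), the inner integral is dominated by $(k/\varepsilon)^{q-1}\int_0^{\infty}\sigma^{-q-1}h_{\bM}(\sigma)\,d\sigma$. The tail near $\sigma=0$ converges because $h_{\bM}(\sigma)\to 0$ rapidly (indeed $h_{\bM}(\sigma)\le M_p\sigma^p$ for every $p$, so one picks $p>q$); near $\sigma=\infty$ there is no issue since $\sigma$ is bounded above. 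This is precisely the type of bound~(\ref{equacotasintegrhM}), so after passing to the variable $p=\lfloor -q\rfloor$ or directly applying~(\ref{equacotasintegrhM}) with the exponent $-q$, the integral is bounded by $C D^{\lceil q\rceil}m_e(q)$ up to the equivalence of $\mathfrak{m}_e$ with $\bM$ and continuity of $m_e$ in $\Re(\lambda)\ge 0$ (recall Proposition~\ref{mequivm} and that $m_e$ is well-defined and positive for real nonnegative arguments); here I absorb the correction between $m_e(q)$ for rational $q$ and $M_{\lceil q\rceil}$ into the constants $A,B$, using moderate growth.

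Putting the three pieces together — the outer contour factor, the $|\phi(w)|$ bound, and the inner integral bound of the form $|\lambda_0|\,A_0 B_0^{q}m_e(q)$ — yields the claimed estimate $\sup_{|z|<r_0}|\lambda(\partial_{\mathfrak{m}_e,z})\phi(z)|\le |\lambda_0|\,A B^{q}m_e(q)$ with $r_0$, $A$, $B$ depending only on $e$, $r$, $\varepsilon$, and the leading behaviour of $\lambda$. \textbf{The main obstacle} I anticipate is the bookkeeping around the rational/possibly negative pole order $q$: one has to make sure the substitution turning the ray integral into a bound of the form~(\ref{equacotasintegrhM}) is legitimate (choosing the correct integer $p$ with $p>\max(0,-q)$ so the integral near the origin converges, and then controlling the resulting value by $m_e(q)$ via the equivalence $\mathfrak{m}_e\sim\bM$ and moderate growth), and that the dependence of $\theta$ on $\arg w$ does not spoil the uniformity of the constants $c,k$ in~(\ref{equacotasnucleoe}) — this is handled by working in a closed proper subsector $S_{\omega(\M)-\varepsilon'}$ containing all the relevant directions $\theta+\arg w$, which is possible because $R_0$ is large and the aperture of integration is strictly less than $\omega(\M)\pi$.
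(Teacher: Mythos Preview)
Your plan is correct and follows essentially the same route as the paper: bound $|\lambda(\xi)|$ by $|\lambda_0|\,|\xi|^q$, control $|E(\xi z)e(\xi w)|$ by a ratio of $h_{\mathfrak{m}_e}$'s, apply~(\ref{e120}) with $s=2$ and choose $r_0$ so the ratio collapses to a single $h_{\mathfrak{m}_e}$, and then finish the ray integral using $h_{\mathfrak{m}_e}(K/s)\le m_e(p)(K/s)^p$ for a suitable integer $p>q$.

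Two small points where the paper is slightly cleaner than your sketch: (i) rather than substituting $\sigma=k/(t\varepsilon)$ and invoking~(\ref{equacotasintegrhM}), the paper simply inserts the bound $h_{\mathfrak{m}_e}(\rho(2)c_3/(s\varepsilon))\le m_e(\lfloor q\rfloor+2)\big(\rho(2)c_3/(s\varepsilon)\big)^{\lfloor q\rfloor+2}$ directly, reducing to $\int_{R_0}^\infty s^{-(3+\lfloor q\rfloor-q)}\,ds$; (ii) to pass from $m_e(\lfloor q\rfloor)$ to $m_e(q)$ with constants independent of $q$ (needed for the Corollary), the paper does not rely on moderate growth alone but on the strict convexity of $x\mapsto m_e(x)$, which gives a universal $A_1>0$ with $m_e(x)\le A_1 m_e(y)$ whenever $0\le x\le y$. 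Also, watch the exponents: the integrand carries $t^{q-1}$ (not $t^{q-2}$), and your formula for $r_0$ is inverted; these are harmless slips in the plan.
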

\begin{proof}
One may choose $R_0>0$ such that $|\lambda(\xi)|\le 2|\lambda_0||\xi|^q$ for every $\xi$ with $|\xi|\ge R_0$. Let $w\in\C$ with $0<|w|=\varepsilon<r$, and put $\theta=-\arg(w)$ and $\xi_0=R_0e^{i\theta}$. One has
\begin{equation}\label{e64a}
\left|\int_{\xi_0}^{\infty(\theta)}\lambda(\xi)E(\xi z)\frac{e(\xi w)}{\xi w}d\xi\right|
\le 2|\lambda_0|\int_{R_0}^{\infty}s^q|E(se^{i\theta}z)|\frac{|e(se^{i\theta}w)|}{s\varepsilon}ds.
\end{equation}
The properties of the kernel functions $e$ and $E$ stated in Definition~\ref{defikernelMsumm}, rephrased according to Remark~\ref{remamomentsarestronglyregular}(ii), allow us to write
\begin{equation*}
|E(se^{i\theta}z)e(se^{i\theta}w)|\le\frac{c_1}{h_{\mathfrak{m}_e}\left(\frac{c_2}{s|z|}\right)}
h_{\mathfrak{m}_e}\left(\frac{c_3}{s\varepsilon}\right)
\end{equation*}
for some $c_1,c_2,c_3>0$ and for every $s\in[R_0,\infty)$.
>From (\ref{e120}) one has
\begin{equation}
\label{e74a}
\frac{c_1}{h_{\mathfrak{m}_e}\big(\frac{c_2}{s|z|}\big)}h_{\mathfrak{m}_e}\big(\frac{c_3}{s\varepsilon}\big)\le \frac{c_1h^2_{\mathfrak{m}_e}\big(\frac{\rho(2)c_3}{s\varepsilon}\big)}{h_{\mathfrak{m}_e}\big(\frac{c_2}{s|z|}\big)}.
\end{equation}
Let $r_0>0$ be such that $r_0\le c_2\varepsilon/(\rho(2)c_3)$, so that $\rho(2)c_3/(s\varepsilon)\le c_2/(s|z|)$ for every $z\in D(0,r_0)$. For such $z$, the expression in the right-hand side of (\ref{e74a}) is upper bounded by $c_1h_{\mathfrak{m}_e}\big(\rho(2)c_3/(s\varepsilon)\big)$, and one obtains that the last expression in (\ref{e64a}) can be upper bounded by
$$2c_1|\lambda_0|\int_{R_0}^{\infty}\frac{s^{q-1}}{\varepsilon}h_{\mathfrak{m}_e}\big(\frac{\rho(2)c_3}{s\varepsilon}\big)ds.$$
In turn, by the very definition of $h_{\mathfrak{m}_e}$, the previous quantity is less than
$$2c_1|\lambda_0|(\rho(2)c_3)^{\left\lfloor q\right\rfloor+2}\frac{1}{\varepsilon^{\left\lfloor q\right\rfloor+3}}m_e(\left\lfloor q\right\rfloor+2)\int_{R_0}^{\infty}\frac{1}{s^{3+\left\lfloor q\right\rfloor-q}}ds.$$
The last integral is easily seen to be bounded above by some constant independent of $q$. Moreover, the moderate growth property of $\mathfrak{m}_e$ leads to an estimate of the form
$$
|\lambda_0|A_0B_0^{\lfloor q\rfloor}m_e(\lfloor q\rfloor).
$$
Finally, we observe that the function $x\in[0,\infty)\to m_e(x)$ is continuous, strictly convex (since $m_e''(x)>0$ for every $x>0$) and $\lim_{x\to\infty}m_e(x)=\infty$, so it reaches its absolute minimum $m_e(x_0)>0$ at a point $x_0\ge 0$, and it is decreasing in $[0,x_0)$ (if $x_0>0$) and increasing in $(x_0,\infty)$. So, we deduce that whenever $x_0\le x\le y$ we have $m_e(x)\le m_e(y)$, while if $0\le x<x_0$ and $x\le y$, then $m_e(x)/m_e(y)\le m_e(0)/m_e(x_0)$. In conclusion, there exists a constant $A_1>0$ such that $m_e(x)\le A_1m_e(y)$ whenever $0\le x\le y$, and in particular, $m_e(\lfloor x\rfloor)\le A_1m_e(x)$ for every $x>0$, what leads to the final estimate.
\end{proof}

For $j\in\N$, the function $\lambda^j(\xi)$ has pole order $jq$ and leading term $\lambda_0^j$. So, an argument similar to the previous one provides the proof for the following result.

\begin{coro}\label{coro84a}
Let $j\in\N$. Under the assumptions of Lemma~\ref{lema57a}, one has
$$\sup_{|z|<r_0}|\lambda^{j}(\partial_{\mathfrak{m}_e,z})\phi(z)|\le |\lambda_0|^{j}AB^{jq}m_e(qj)
$$
for some $r_0,A,B>0$.
\end{coro}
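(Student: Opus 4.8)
The plan is to apply the estimate of Lemma~\ref{lema57a} with $\lambda$ replaced by $\lambda^{j}$, paying attention to the dependence of the constants on the pole order. First I would record the elementary fact, already noted above, that $\lambda^{j}(\xi)$ is holomorphic on the same neighborhood of $\infty$ as $\lambda(\xi)$ and that $\lambda^{j}(\xi)\xi^{-jq}\to\lambda_{0}^{j}$ as $\xi\to\infty$, so that $\lambda^{j}$ has pole order $jq$ and leading term $\lambda_{0}^{j}$ in the sense of Definition~\ref{defipoleorder}.

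Next I would rerun the computation in the proof of Lemma~\ref{lema57a} essentially verbatim, now for $\lambda^{j}(\partial_{\mathfrak{m}_{e},z})\phi$. Fixing $R_{0}\ge 1$ so that $|\lambda(\xi)|\le 2|\lambda_{0}||\xi|^{q}$ for $|\xi|\ge R_{0}$, one has $|\lambda^{j}(\xi)|\le(2|\lambda_{0}|)^{j}|\xi|^{jq}$ there; keeping the same $\varepsilon$, $\theta$, $\xi_{0}$, the same kernel estimates from Definition~\ref{defikernelMsumm} (rephrased as in Remark~\ref{remamomentsarestronglyregular}(ii)), the same use of~(\ref{e120}) with $s=2$, and the same radius $r_{0}$, one arrives at the identical chain of inequalities with $q$ replaced throughout by $jq$ and $|\lambda_{0}|$ by $(2|\lambda_{0}|)^{j}$. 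As there, the tail integral $\int_{R_{0}}^{\infty}s^{-(3+\lfloor jq\rfloor-jq)}\,ds$ is bounded by a constant independent of $j$ (since $2<3+\lfloor jq\rfloor-jq\le 3$ and $R_{0}\ge 1$), and the moderate growth of $\mathfrak{m}_{e}$ together with the comparison $m_{e}(\lfloor x\rfloor)\le A_{1}m_{e}(x)$ established at the end of that proof turn the bound into
$$(2|\lambda_{0}|)^{j}\,A_{0}\,B_{0}^{jq}\,m_{e}(jq),$$
with $A_{0},B_{0}$ depending only on $e$, $\phi$, $r$, $\varepsilon$, $\rho(2)$ and the kernel constants. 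Absorbing $2^{j}=(2^{1/q})^{jq}$ into the exponential (for $q>0$; if $q=0$ there is nothing to absorb) gives the claimed estimate $\sup_{|z|<r_{0}}|\lambda^{j}(\partial_{\mathfrak{m}_{e},z})\phi(z)|\le|\lambda_{0}|^{j}AB^{jq}m_{e}(jq)$ with $A:=A_{0}A_{1}$ and $B:=\max\{B_{0},1\}\max\{2^{1/q},1\}$.

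The point deserving care — and the reason the corollary is not literally immediate from Lemma~\ref{lema57a} — is that the constants in that lemma are a priori allowed to depend on the pole order, so one cannot simply substitute $q\mapsto jq$ in its conclusion. One has to re-examine its proof and verify that, apart from the harmless factor $2^{j}$ coming from $|\lambda^{j}(\xi)|\le(2|\lambda_{0}|)^{j}|\xi|^{jq}$, all the constants that occur (the kernel constants $c_{1},c_{2},c_{3}$, the radii $\varepsilon$ and $R_{0}$, the factor $\rho(2)$, the bound on the tail integral, the moderate growth constant of $\mathfrak{m}_{e}$ and the comparison constant $A_{1}$) stay uniform in $j$. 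Once this bookkeeping is done, the estimate drops out exactly as above.
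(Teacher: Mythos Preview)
Your proposal is correct and follows exactly the approach the paper indicates: the paper's proof is the single sentence ``an argument similar to the previous one provides the proof,'' and you have carried out precisely that rerun of Lemma~\ref{lema57a} with $\lambda$ replaced by $\lambda^{j}$, tracking that the constants remain uniform in $j$. Your additional care in verifying that $R_{0}$, $r_{0}$, the kernel constants, the tail-integral bound, and the moderate-growth and convexity constants are all $j$-independent (with only the harmless factor $2^{j}$ to absorb into $B^{jq}$) is exactly the bookkeeping the paper leaves implicit; the parenthetical about $q=0$ is a small slip (the $2^{j}$ would still be there and could not be absorbed), but in the paper's context $q>0$ throughout, so this does not affect the argument.
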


\begin{rema}
The previous estimates, according to Remark~\ref{remanotassumabM}(ii), could also be expressed as $|\lambda_0|^{j}AB_1^{jq}(m_e(j))^q$ for suitable $B_1>0$.
\end{rema}

As indicated before (see~(\ref{e87a}) and~(\ref{e97a})), a problem of the form
\begin{equation}\label{e116a}
\left\{
\begin{array}{l}
(\partial_{\mathfrak{m}_1,t}-\lambda(\partial_{\mathfrak{m}_2,z}))^{\beta}\hat{u}=0\\
\partial_{\mathfrak{m}_1,t}^{j}\hat{u}(0,z)=0,\quad j=0,\ldots,\beta-2\\
\partial_{\mathfrak{m}_1,t}^{\beta-1}\hat{u}(0,z)=\lambda^{\beta-1}(\partial_{\mathfrak{m}_2,z})\phi(z),
\end{array}
\right.
\end{equation}
where $\beta\in\N$ and $\phi\in\mathcal{O}(D(0,r))$, has
\begin{equation}\label{e97b}
\hat{u}(t,z)=\sum_{j=\beta-1}^{\infty}\left(\begin{array}{c}j\\\beta-1\end{array}\right)
\frac{\lambda^{j}(\partial_{\mathfrak{m}_{2},z})\phi(z)}{m_{1}(j)}t^{j}
=:\sum_{j=0}^{\infty}u_j(z)t^{j}
\end{equation}
as its formal solution, and Corollary~\ref{coro84a} allows us to claim that
$$\sup_{|z|<r_0}|u_{j}(z)|\le CD^{j}\frac{m_{2}(qj)}{m_1(j)},$$
for some $r_0,C,D>0$ and for every $j\in\N_0$. Hence, convergence or divergence of $\hat{u}$ in some neighborhood of the origin is a consequence of the growth rate of the sequence $(\frac{m_{2}(qj)}{m_1(j)})_{j\ge0}$. More precisely, one has

\begin{coro}\label{coro135a}
If
$$
\overline{\lim}_{j\to\infty}\left(\frac{m_{2}(qj)}{m_1(j)}\right)^{1/j}<\infty,
$$
then $\hat{u}$ in~(\ref{e97b}) defines a holomorphic function $u(t,z)$ on $D_{1}\times D(0,r_0)$ for some neighborhood of the origin $D_1\subseteq\C$, and $u$ solves~(\ref{e116a}).
\end{coro}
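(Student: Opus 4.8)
The plan is to read the result off as a root-test estimate, since the one genuinely nontrivial ingredient, the pseudodifferential bound of Corollary~\ref{coro84a}, is already in hand. First I would invoke Corollary~\ref{coro84a} with the kernel $e=e_2$, combine it with the definition of the coefficients $u_j$ in~(\ref{e97b}) and the crude bound $\binom{j}{\beta-1}\le 2^j$, to obtain constants $C,D,r_0>0$ such that $\sup_{|z|<r_0}|u_j(z)|\le C\,D^{j}\,m_2(qj)/m_1(j)$ for every $j\in\N_0$, each $u_j$ being holomorphic on $D(0,r_0)$ by the construction of $\lambda(\partial_{\mathfrak{m}_2,z})$ recalled above. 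Writing $L:=\overline{\lim}_{j\to\infty}\big(m_2(qj)/m_1(j)\big)^{1/j}$, which is finite by hypothesis, this gives $\overline{\lim}_{j\to\infty}\big(\sup_{|z|<r_0}|u_j(z)|\big)^{1/j}\le DL<\infty$, hence $\sum_{j\ge0}u_j(z)\,t^{j}$ converges absolutely and uniformly on every compact subset of $D_1\times D(0,r_0)$, where $D_1:=D(0,1/(DL))$ (and $D_1:=\C$ if $L=0$). By Weierstrass' theorem the sum is a holomorphic function $u(t,z)$ on $D_1\times D(0,r_0)$ with Taylor expansion $\hat u$, which is the object claimed.

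To see that $u$ solves~(\ref{e116a}), I would use that $\hat u$ is by construction the \emph{formal} solution, i.e. $(\partial_{\mathfrak{m}_1,t}-\lambda(\partial_{\mathfrak{m}_2,z}))^{\beta}\hat u=0$ in $\C[[t,z]]$ and the initial conditions hold coefficientwise. It then suffices to check that both moment operators act on the convergent series $u$ by the same termwise rule, producing series again convergent on a (possibly smaller) polydisc. The operator $\partial_{\mathfrak{m}_1,t}$ shifts the $t$-coefficients; since $m_1(p+1)/m_1(p)$ grows at most geometrically by the moderate growth of $\mathfrak{m}_1$, the shifted series retains a positive radius in $t$, and this termwise action coincides with the analytic operator of Proposition~3 in~\cite{Michalik3}. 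For $\lambda(\partial_{\mathfrak{m}_2,z})$, applying it inside the sum $\hat u=\sum_p\binom{p}{\beta-1}\frac{\lambda^{p}(\partial_{\mathfrak{m}_2,z})\phi(z)}{m_1(p)}t^p$ yields $\sum_p\binom{p}{\beta-1}\frac{\lambda^{p+1}(\partial_{\mathfrak{m}_2,z})\phi(z)}{m_1(p)}t^p$; Corollary~\ref{coro84a} bounds $\sup_{|z|<r_0}|\lambda^{p+1}(\partial_{\mathfrak{m}_2,z})\phi(z)|$ by $|\lambda_0|^{p+1}AB^{(p+1)q}m_2(q(p+1))$, and moderate growth of $\mathfrak{m}_2$ together with the hypothesis shows $\big(m_2(q(p+1))/m_1(p)\big)^{1/p}$ stays bounded, so the series converges on a polydisc, where by the identity theorem it agrees with the integral-operator definition of $\lambda(\partial_{\mathfrak{m}_2,z})$. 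Iterating these two remarks $\beta$ times, $(\partial_{\mathfrak{m}_1,t}-\lambda(\partial_{\mathfrak{m}_2,z}))^{\beta}u$ is the sum of a convergent series whose coefficients are exactly those of the vanishing formal series $(\partial_{\mathfrak{m}_1,t}-\lambda(\partial_{\mathfrak{m}_2,z}))^{\beta}\hat u$, hence vanishes identically there; after possibly shrinking $D_1$ (the $z$-disc $D(0,r_0)$ being preserved, since a single $r_0$ serves all powers of $\lambda(\partial_{\mathfrak{m}_2,z})$ by Corollary~\ref{coro84a}) the equation holds on $D_1\times D(0,r_0)$. The initial conditions come from the same termwise computation: with $g_p(z):=\binom{p}{\beta-1}\lambda^{p}(\partial_{\mathfrak{m}_2,z})\phi(z)$ and the normalisation $m_1(0)=1$ one has $\partial_{\mathfrak{m}_1,t}^{j}u(0,z)=g_j(z)$, which is $0$ for $j=0,\dots,\beta-2$ and equals $\lambda^{\beta-1}(\partial_{\mathfrak{m}_2,z})\phi(z)$ for $j=\beta-1$.

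I do not expect a serious obstacle: the hard analytic work is packaged into Corollary~\ref{coro84a}, and the convergence part is a bare root test. The only point requiring care is the bookkeeping in the previous paragraph — tracking how far the polydisc of validity must shrink under $(\partial_{\mathfrak{m}_1,t}-\lambda(\partial_{\mathfrak{m}_2,z}))^{\beta}$ and under the $\partial_{\mathfrak{m}_1,t}^{j}$, and checking that the formal termwise action of the pseudodifferential operators on convergent power series matches their analytic contour-integral definition. Both points are entirely parallel to the corresponding steps in~\cite{Michalik3}, so they can be dispatched quickly following that reference.
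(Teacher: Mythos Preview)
Your proposal is correct and follows essentially the same approach as the paper. The paper derives the estimate $\sup_{|z|<r_0}|u_j(z)|\le CD^{j}\,m_2(qj)/m_1(j)$ directly from Corollary~\ref{coro84a} (absorbing the binomial coefficient into the geometric factor, just as you do with $\binom{j}{\beta-1}\le 2^j$) and then states the corollary as an immediate root-test consequence, without giving a separate proof; your second paragraph, verifying that the convergent sum actually solves~(\ref{e116a}) and matches the analytic pseudodifferential operators, makes explicit what the paper leaves implicit.
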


We now turn our attention to the determination of sufficient conditions for $u(t,z)$ to admit analytic continuation in an unbounded sector with respect to the variable $t$ and with adequate growth. We first need some notation, starting with $\mathcal{O}^{\mathfrak{m}_e}(S)$ (see Definition~\ref{defiMgrowth}), where $S$ is an unbounded sector in $\mathcal{R}$ and $\mathfrak{m}_e$ is a strongly regular sequence of moments for a kernel $e$.

\begin{defi}
We write $f\in\mathcal{O}^{\mathfrak{m}_e}(\hat{S})$ if $f\in\mathcal{O}^{\mathfrak{m}_e}(S)\cap\mathcal{O}(S\cup D)$ for some disc $D=D(0,r)$.

Let $D=D(0,r)$. We say $f(t,z)$, holomorphic in $S\times D$, belongs to $\mathcal{O}^{\mathfrak{m}_e}(S\times D)$ if for every $T\prec S$ and $r_1\in(0,r)$ there exist $c,k>0$ such that
$$\sup_{z\in D(0,r_1)}|f(t,z)|\le \frac{c}{h_{\mathfrak{m}_e}\left(k/|t|\right)},\quad t\in T.$$

Analogously, we write $f\in\mathcal{O}^{\mathfrak{m}_e}(\hat{S}\times D)$ if $f\in\mathcal{O}((S\cup D_1)\times D)\cap \mathcal{O}^{\mathfrak{m}_e}(S\times D)$ for some disc  $D_1$ around the origin. We also write $\mathcal{O}^{\mathfrak{m}_e}(\hat{S}(d))$ and $\mathcal{O}^{\mathfrak{m}_e}(\hat{S}(d)\times D)$ whenever the sector $S$ is of the form $S(d,\varepsilon)$ for some inessential $\varepsilon>0$.
\end{defi}

>From Proposition~\ref{propKomatsu} we deduce the following result.

\begin{coro}\label{coro136a}
If there exists a strongly regular sequence of moments $\mathfrak{m}_e=(m_e(j))_{j\in\N_0}$ and  $C,D>0$ such that
$$m_2(qj)m_e(j)\le CD^{j}m_1(j)$$
for every $j\ge 0$, then $\hat{u}$ in~(\ref{e97b}) defines a function $u\in\mathcal{O}(\C\times D(0,r_0))$, and one has
$$\sup_{|z|<r_0}|u(t,z)|\le \frac{c}{h_{\mathfrak{m}_e}(k/|t|)},$$
for some $c,k>0$ and for every $t\in\C$.
\end{coro}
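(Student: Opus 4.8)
The plan is to push the hypothesis through the coefficient estimate already established for the formal solution and then quantify Komatsu's characterisation of $\mathfrak{m}_e$-growth (Proposition~\ref{propKomatsu}), taking care to keep all constants independent of $z$. Recall from Corollary~\ref{coro84a} and the discussion following it that the formal solution of (\ref{e116a}) is $\hat u=\sum_{j\ge0}u_j(z)t^j$, with each $u_j$ holomorphic on $D(0,r_0)$ (see (\ref{e97b})) and
$$
\sup_{|z|<r_0}|u_j(z)|\le CD^j\,\frac{m_2(qj)}{m_1(j)},\qquad j\in\N_0,
$$
for suitable $r_0,C,D>0$. Inserting the assumption $m_2(qj)m_e(j)\le C'D'^{j}m_1(j)$ and relabelling constants yields
$$
\sup_{|z|<r_0}|u_j(z)|\le C_1D_1^{j}\,\frac{1}{m_e(j)},\qquad j\in\N_0 .
$$

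Next I would invoke the elementary inequality $\sum_{j\ge0}x^j/m_e(j)\le C_2/h_{\mathfrak{m}_e}(k/x)$, valid for every $x>0$ with $C_2,k>0$ depending only on $\mathfrak{m}_e$; this is a standard consequence of the moderate growth property $(\mu)$ of $\mathfrak{m}_e$ — compare the estimate (\ref{equacotasE}) for the kernel $E$ associated with $\mathfrak{m}_e$, evaluated on the positive real axis. Since the right-hand side is finite for every $x>0$, the series $\sum_ju_j(z)t^j$ converges uniformly on $\{|t|\le R\}\times D(0,r_0)$ for each $R>0$; as the $u_j$ are holomorphic on $D(0,r_0)$, the sum $u(t,z):=\sum_ju_j(z)t^j$ is holomorphic on $\C\times D(0,r_0)$ and, being the convergent formal solution, it solves (\ref{e116a}) exactly as in Corollary~\ref{coro135a}. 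Moreover, for every $z\in D(0,r_0)$ and $t\in\C$,
$$
|u(t,z)|\le C_1\sum_{j\ge0}(D_1|t|)^j\,\frac{1}{m_e(j)}\le \frac{C_1C_2}{h_{\mathfrak{m}_e}\!\left(k/(D_1|t|)\right)}=\frac{c}{h_{\mathfrak{m}_e}(\tilde k/|t|)},
$$
with $c=C_1C_2$ and $\tilde k=k/D_1$ independent of $z$, which is precisely the asserted estimate.

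Because all the technical work has been carried out in Lemma~\ref{lema57a} and Corollary~\ref{coro84a}, the only point that genuinely requires attention here is the uniformity in $z$ of the final bound: Proposition~\ref{propKomatsu} is stated for a single entire function, so rather than applying it verbatim one should either observe that its proof uses only the coefficient bound $|a_j|\le c_0k_0^{j}/m_e(j)$, whose constants are here independent of $z$, or run the estimate $\sum_jx^j/m_e(j)\le C_2/h_{\mathfrak{m}_e}(k/x)$ directly — the step in which the moderate growth $(\mu)$ of $\mathfrak{m}_e$ plays, once more, the role highlighted in the Remark following (\ref{e120}). I do not anticipate any further difficulty.
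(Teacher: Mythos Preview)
Your proposal is correct and follows essentially the same approach as the paper, which simply states that the corollary is deduced from Proposition~\ref{propKomatsu} without further detail. You have spelled out that deduction---combining the coefficient bound from Corollary~\ref{coro84a} with the hypothesis and then invoking the (ii)$\Rightarrow$(i) direction of Proposition~\ref{propKomatsu}---and, importantly, you explicitly address the uniformity in $z$, which the paper leaves implicit but which is indeed the only point requiring care.
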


\begin{rema}
In the particular case that $\mathfrak{m}_{1}=\bM_{1/k_{1}}$ and $\mathfrak{m}_{2}=\bM_{1/k_{2}}$ for some $k_1,k_2>0$ with $1/k_1>q/k_2$, we would have that $\hat{u}\in\mathcal{O}(\C\times D(0,r_0))$, with exponential growth in the variable $t$ of order $(\frac{1}{k_1}-q\frac{1}{k_2})^{-1}$, namely
$$\sup_{|z|<r_0}|u(t,z)|\le Ce^{D|t|^{\frac{k_1k_2}{k_2-qk_1}}},\quad t\in\C,$$
for some $C,D>0$, as stated in Proposition 5 of~\cite{Michalik3}.
\end{rema}

In order to go further in our study, by an argument entirely analogous to that in Lemma 4 in~\cite{Michalik3} one can prove that, under the assumptions of Corollary~\ref{coro135a}, the actual solution of (\ref{e116a}) can be written in a neighborhood of $(0,0)$ in the form
\begin{equation}\label{eB}
u(t,z)=\frac{t^{\beta-1}}{(\beta-1)!}\partial_{t}^{\beta-1}\frac{1}{2\pi i}\oint_{|w|=\varepsilon}\phi(w)\int_{\xi_0}^{\infty(\theta)}E_{1}(t\lambda(\xi))E_{2}(\xi z)\frac{e_{2}(\xi w)}{\xi w}d\xi dw,
\end{equation}
with $\theta\in(-\arg(w)-\frac{\pi}{2}w(\mathfrak{m}_2), -\arg(w)+\frac{\pi}{2}w(\mathfrak{m}_2))$,
and where $E_1$ and $E_2$ are the kernels corresponding to $e_1$ and $e_2$, respectively.

We are ready to relate the properties of analytic continuation and growth of the initial data with those of the solution. In these last results we assume the kernels $e_1,E_1$ have been constructed following the procedure in Section~\ref{sectkernelsfromproxorder}.

\begin{lemm}\label{lemarelaccrecexpondatossoluc}
Let $q=\mu/\nu\in\mathbb{Q}$, with $\gcd(\mu,\nu)=1$ and $\beta\ge1$. We assume the moment functions $m_1(\lambda)$ and $m_2(\lambda)$ are such that
\begin{equation}\label{eA}
m_2(qj)\le C_0A_0^jm_1(j), \quad j\in\N_0,
\end{equation}
and
\begin{equation}\label{eAbis}
m_1(j/q)\le C_1A_1^jm_2(j), \quad j\in\N_0,
\end{equation}
for suitable $C_0,C_1,A_0,A_1>0$. Let $u(t,z)$ be a solution of
\begin{equation}\label{e193a}
\left\{
\begin{array}{l}
(\partial_{\mathfrak{m}_1,t}-\lambda(\partial_{\mathfrak{m}_2,z}))^{\beta}u=0\\
\partial_{\mathfrak{m}_1,t}^{j}\hat{u}(0,z)=\phi_{j}(z)\in\mathcal{O}(D(0,r)),\quad j=0,\ldots,\beta-1,
\end{array}
\right.
\end{equation}
for some $r>0$. If there exists a strongly regular sequence of moments $\mathfrak{m}=(m(j))_{j\in\N_0}$ such that:
\begin{itemize}
\item[(i)] there exist $C,A>0$ with
\begin{equation}\label{equammenormuno}
m(j)\le CA^jm_1(j), \quad j\in\N_0,
\end{equation}
\item[(ii)] $\phi_{j}\in\mathcal{O}^{\mathfrak{m}^{(1/q)}}(\hat{S}((d+\arg(\lambda))/q+2k\pi/\mu))$ for every $k=0,\ldots,\mu-1$ and $j=0,\ldots,\beta-1$, and some $d\in\R$,
\end{itemize}
then $u(t,z)\in\mathcal{O}^{\mathfrak{m}}(\hat{S}(d+2n\pi/\nu)\times D(0,r))$ for $n=0,\ldots,\nu-1$.
\end{lemm}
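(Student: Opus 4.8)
\begin{proof1}
The plan is to follow the scheme of Lemma~6 in~\cite{Michalik3}, adapting all estimates to the auxiliary functions $M_{\mathfrak{m}_1}$, $M_{\mathfrak{m}_2}$ associated (as in~(\ref{equadefiMdet})) with the general strongly regular sequences $\mathfrak{m}_1$, $\mathfrak{m}_2$. I would take $\o(\mathfrak{m}_1)<2$ (otherwise replace $\mathfrak{m}_1$ by a $1/s$-power as in Remark~\ref{remamomentsarestronglyregular}(ii)), so that $e_1,E_1$ --- which by hypothesis come from the construction of Section~\ref{sectkernelsfromproxorder} --- obey Proposition~\ref{propcotaVsectorizquier}. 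First comes a reduction to an integral representation: since~(\ref{eA}) forces $\overline{\lim}_{j\to\infty}(m_2(qj)/m_1(j))^{1/j}<\infty$, Corollary~\ref{coro135a} applies, and arguing as for Lemma~4 of~\cite{Michalik3} the solution of~(\ref{e193a}) is, near the origin, a finite sum (by linearity and the $\beta$-level analogue of~(\ref{e87a})) of functions $\frac{t^{\beta'-1}}{(\beta'-1)!}\partial_t^{\beta'-1}v(t,z)$, where $v$ has the form~(\ref{eB}) and $\phi$ is a pseudodifferential image (symbol of polynomial growth) of a linear combination of the $\phi_j$. Since such operators preserve the spaces $\mathcal{O}^{\mathfrak{m}^{(1/q)}}(\hat{S}(\cdot))$ (as in~\cite{Michalik3}) and $\frac{t^{\beta'-1}}{(\beta'-1)!}\partial_t^{\beta'-1}$ maps $\mathcal{O}^{\mathfrak{m}}(\hat{S}(\cdot)\times D)$ into itself (Cauchy's estimates together with $\lim_{t\to\infty}t\,h_{\mathfrak{m}}(1/t)=0$), it will suffice to prove that such a $v$ lies in $\mathcal{O}^{\mathfrak{m}}(\hat{S}(d+2n\pi/\nu)\times D(0,r))$ for $n=0,\dots,\nu-1$, given that $\phi\in\mathcal{O}^{\mathfrak{m}^{(1/q)}}(\hat{S}((d+\arg\lambda)/q+2k\pi/\mu))$ for $k=0,\dots,\mu-1$.

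Next, for a fixed $n$, I would obtain the analytic continuation of $v(\cdot,z)$ into a sector about $d+2n\pi/\nu$ by deforming the closed $w$-contour $|w|=\varepsilon$ in~(\ref{eB}) into a truncated path $\Gamma_R(t)$: two segments along rays lying inside the sectors $\hat{S}((d+\arg\lambda)/q+2k\pi/\mu)$ where $\phi$ is holomorphic, for a suitable index $k=k(n)$, joined by an arc of radius $R=R(|t|)$, while choosing the direction $\theta$ of the inner $\xi$-ray within the admissible window $(-\arg w-\tfrac{\pi}{2}\o(\mathfrak{m}_2),-\arg w+\tfrac{\pi}{2}\o(\mathfrak{m}_2))$. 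Because $\lambda(\xi)$ has pole order $q$ and leading term $\lambda_0$, one has $\arg(t\lambda(\xi))\to\arg t+\arg\lambda_0+q\theta$ as $\xi\to\infty$ along that ray, so increasing $k$ by one shifts $\arg w$ by $2\pi/\mu$, $\theta$ by $-2\pi/\mu$, and this terminal argument by $-2\pi q/\mu=-2\pi/\nu$. Since $\o(\mathfrak{m}_1)=q\,\o(\mathfrak{m}_2)$ --- which follows from~(\ref{eA})--(\ref{eAbis}) via~(\ref{equaordeMdet}) --- the range swept by the terminal argument has the same width as the sector $|\arg|<\tfrac{\pi}{2}\o(\mathfrak{m}_1)$ in which $E_1$ may grow, and using the shifts by $2\pi/\nu$, the relation $\gcd(\mu,\nu)=1$ and $\arg t$ running over a small interval about $d+2n\pi/\nu$, I expect to be able to fix --- locally constant in $\arg t$ --- an index $k$ and a ray direction $\theta$ for which the $w$-rays stay in the prescribed sectors of $\phi$ and the terminal argument misses that bad sector; there $|E_1(t\lambda(\xi))|=O(1/(|t|\,|\xi|^q))$ by Proposition~\ref{propcotaVsectorizquier} (and $|E_1(t\lambda(\xi))|\le C_1e^{M_{\mathfrak{m}_1}(|t|\,|\lambda(\xi)|/K_1)}$ in any case). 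Cauchy's theorem then shows the integral over $\Gamma_R(t)$ is independent of the admissible choices, agrees with $v$ near $0$, and continues $v$ holomorphically to $S(d+2n\pi/\nu,\varepsilon_1)\times D(0,r_1)$ for every $r_1<r$. I expect this matching --- of the $\mu$ continuation directions of $\phi$ with the $\nu$ output directions of $u$ through the $q$-fold ramification --- to be the hard part.

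For the growth bound I would, on $\Gamma_R(t)$, estimate exactly as in the proof of Lemma~\ref{lema57a}: $|e_2(\xi w)|\le c_2h_{\mathfrak{m}_2}(k_2/(|\xi||w|))$ and $|E_2(\xi z)|\le C_2/h_{\mathfrak{m}_2}(K_2/(|\xi||z|))$ from Definition~\ref{defikernelMsumm}(iii),(v) (rephrased by~(\ref{equahdeMequi}) as in Remark~\ref{remamomentsarestronglyregular}(ii)), so by~(\ref{e120}) and for $|z|<r_1$ small enough $|E_2(\xi z)e_2(\xi w)|\le c\,h_{\mathfrak{m}_2}(\rho(2)k_2/(|\xi||w|))$; also $|\phi(w)|\le c/h_{\mathfrak{m}^{(1/q)}}(k_3/|w|)=c\,e^{\frac1qM_{\mathfrak{m}}(|w|^q/k_3^q)}$ by hypothesis~(ii) and~(\ref{equahMpotencia}); and $|E_1(t\lambda(\xi))|$ as above. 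Translating~(\ref{eA}) and~(\ref{eAbis}) into $M_{\mathfrak{m}_1}(r^q)\le\mathrm{const}+M_{\mathfrak{m}_2}(\mathrm{const}\cdot r)$ and $M_{\mathfrak{m}_2}(r)\le\mathrm{const}+M_{\mathfrak{m}_1}(\mathrm{const}\cdot r^q)$ (obtained by taking suprema over $j$ in the definition of $M_{\mathfrak{m}_1}$, $M_{\mathfrak{m}_2}$), I would choose $R$ of the order of $|t|^{1/q}$ to make the inner $\xi$-integral converge, and then carry out the (elementary) $\xi$- and $w$-integrations --- using~(\ref{equacotasintegrhM}) for the radial pieces --- to obtain, after the balancing of the several exponential factors that~(\ref{eA}),~(\ref{eAbis}) make feasible, a bound $\sup_{|z|<r_1}|v(t,z)|\le c\,e^{M_{\mathfrak{m}_1}(k|t|)}$, i.e.\ $\mathfrak{m}_1$-growth in $S(d+2n\pi/\nu,\varepsilon_1)$. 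Finally, hypothesis~(i), i.e.~(\ref{equammenormuno}), gives $h_{\mathfrak{m}}(t)\le C'h_{\mathfrak{m}_1}(A't)$ for all $t\ge0$, hence $\mathcal{O}^{\mathfrak{m}_1}(S)\subseteq\mathcal{O}^{\mathfrak{m}}(S)$, so $v\in\mathcal{O}^{\mathfrak{m}}(\hat{S}(d+2n\pi/\nu)\times D(0,r))$; since $n$ was arbitrary the same holds for $u$, which is the claim. Apart from the deformation step, the only other delicate point is precisely this passage from the moment inequalities~(\ref{eA}),~(\ref{eAbis}),~(i) to estimates on the associated functions and the consequent matching of exponentials; everything else is routine.
\end{proof1}
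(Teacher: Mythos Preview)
Your overall strategy matches the paper's, but the contour deformation step contains a genuine misconception. You describe $\Gamma_R(t)$ as two radial segments and one arc at radius $R$, chosen for a \emph{single} index $k=k(n)$. This cannot be right: the original $w$-contour $|w|=\varepsilon$ is a closed curve, and a deformation preserving the value of the integral (Cauchy's theorem) must remain closed. The correct construction, which is what the paper does (following Lemma~5, not Lemma~6, of~\cite{Michalik3}), uses \emph{all} $\mu$ directions simultaneously: the circle is partitioned into $\mu$ ``good'' arcs $\gamma_{2k+1}$ (kept at radius $\varepsilon$) alternating with $\mu$ ``bad'' arcs, each replaced by a detour $\gamma_{2k}^R$ going out to radius $R$ along the ray $\arg w=(d+\arg\lambda)/q+2k\pi/\mu-\delta/3$, around an arc at radius $R$, and back along $\arg w=(d+\arg\lambda)/q+2k\pi/\mu+\delta/3$. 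The point is that for a fixed $t$ in one of the $\nu$ output sectors, as $w$ traverses the full circle the admissible $\xi$-direction $\theta$ rotates with $\arg w$, and $\arg(t\lambda(\xi))$ inevitably falls into the growth region $|\arg|<\pi\o(\mathfrak{m}_1)/2$ of $E_1$ precisely on the $\mu$ bad arcs; those are exactly where $\phi$ admits analytic continuation, making the detours possible. The relation $\gcd(\mu,\nu)=1$ enters not to select one $k$, but to ensure this decomposition works for each of the $\nu$ values of $n$.

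A second, smaller correction: the growth estimate does not come out as pure $\mathfrak{m}_1$-growth before invoking~(\ref{equammenormuno}). On the detours $\gamma_{2k}^R$ (contribution $u_1$), the inner $\xi$-integral gives a factor $1/h_{\mathfrak{m}_1}(C/|t|)$ (from the bound on $E_1$, as you outline), while $\phi$ on the radial pieces contributes $1/h_{\mathfrak{m}^{(1/q)}}(C'/R)$; with $R=|t|^{1/q}$ and~(\ref{equahMpotencia}) this becomes essentially $1/h_{\mathfrak{m}}(C''/|t|)$. So the natural bound is the product $1/\big(h_{\mathfrak{m}}(C_{26}/|t|)\,h_{\mathfrak{m}_1}(C_{22}/|t|)\big)$, and hypothesis~(i) is used to convert the $h_{\mathfrak{m}_1}$-factor into an $h_{\mathfrak{m}}$-factor, after which~(\ref{e120}) collapses $h_{\mathfrak{m}}^2$ to $h_{\mathfrak{m}}$. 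On the good arcs $\gamma_{2k+1}$ (contribution $u_2$) one gets $\mathfrak{m}_1$-growth directly, and then $\mathcal{O}^{\mathfrak{m}_1}\subseteq\mathcal{O}^{\mathfrak{m}}$ by~(\ref{equammenormuno}), as you say. Note also that~(\ref{eA}) and~(\ref{eAbis}) are used only for the continuation step (via $\o(\mathfrak{m}_1)=q\,\o(\mathfrak{m}_2)$), not for ``balancing exponential factors'' in the growth estimate.
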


\begin{rema}
According to Remark~\ref{remanotassumabM}(ii), $(m_2(qj))_{j\in\N_0}$
(respectively, $(m_1(j/q))_{j\in\N_0}$) is equivalent to $\mathfrak{m}_2^{(q)}$ (resp. to $\mathfrak{m}_1^{(1/q)}$). Together with this fact, the inequalities (\ref{eA}) and (\ref{eAbis}) amount to the equivalence of $\mathfrak{m}_2^{(q)}$ and $\mathfrak{m}_1$, and so, we deduce by (\ref{equaomegamomentosescalados}) that
\begin{equation}\label{equaomegaigualqomega}
q\o(\mathfrak{m}_2)=\o(\mathfrak{m}_1).
\end{equation}
\end{rema}

\begin{proof}
With the help of Lemma 3 in~\cite{Michalik3}, which may be reproduced in our setting without modification, one can show that the general situation may always be taken into the case $\omega(\mathfrak{m}_1)<2$, which will be the only one we consider.
The principle of superposition of solutions allows us to reduce the study of (\ref{e193a}) to that of some problems of the form (\ref{e116a}), where $\lambda^{\beta-1}(\partial_{\mathfrak{m}_2,z})\phi$ in (\ref{e116a}) turns out to be a function belonging to $\mathcal{O}^{\mathfrak{m}^{(1/q)}}(\hat{S}((d+\arg(\lambda))/q+2k\pi/\mu))$ for every $k=0,\ldots,\mu-1$ and $j=0,\ldots,\beta-1$. Moreover, Corollary~\ref{coro135a} and (\ref{eA}) guarantee the existence of a holomorphic solution $u(t,z)$ of (\ref{e193a}), defined on some neighborhood of the origin in $\C^2$, which can be written in the form (\ref{eB}). Next, we claim that the function
\begin{equation}\label{e212a}
t\mapsto\int_{\xi_0}^{\infty(\theta)}E_{1}(t\lambda(\xi))E_{2}(\xi z)\frac{e_{2}(\xi w)}{\xi w}d\xi,
\end{equation}
which is holomorphic in $\{t\in\C:|t|\le C_2 |w|^{q}\}$ for some $C_2>0$, can be analytically continued to the set
$$\Omega=\{t\in\mathcal{R}:\arg(t)+2k\pi+\arg(\lambda)\neq(\arg(w)+2n\pi)q \textrm{ for every }k,n\in\mathbb{Z}\}.$$
Indeed, the equality (\ref{equaomegaigualqomega}) entails that, as long as $t\in\Omega$,
one can replace $\theta$ in (\ref{e212a}) by a direction $\tilde{\theta}$ such that
$$
\arg(t)+2k\pi+\arg(\lambda)+q\tilde{\theta}\in\big(\pi\o(\mathfrak{m}_1)/2, 2\pi-\pi\o(\mathfrak{m}_1)/2\big) \quad\textrm{for some $k\in\Z$}
$$
and
$$
\arg(w)+2n\pi+\tilde{\theta}\in\big(-\pi\o(\mathfrak{m}_2)/2, \pi\o(\mathfrak{m}_2)/2\big) \quad\textrm{for some $n\in\Z$},
$$
what makes the continuation possible by ensuring the adequate asymptotic behavior of the integrand as $\xi\to\infty$, $\arg(\xi)=\tilde{\theta}$.
The rest of the proof, intended to estimate $u$, also follows the arguments in~\cite[Lemma\ 5]{Michalik3}, but estimates will be carefully given in order to highlight the techniques in this general situation. Suppose $z$ is small relative to $w$.
We deform the integration path $|w|=\varepsilon$ in order to write
\begin{equation*}
u(t,z)=\frac{t^{\beta-1}}{(\beta-1)!}\partial_{t}^{\beta-1}\left(u_{1}(t,z)+u_{2}(t,z)\right),
\end{equation*}
with
$$u_1(t,z)=\sum_{k=0}^{\mu-1}\frac{1}{2\pi i}\oint_{\gamma_{2k}^{R}}\phi(w)\int_{\xi_0}^{\infty(\theta)}E_{1}(t\lambda(\xi))E_{2}(\xi z)\frac{e_{2}(\xi w)}{\xi w}d\xi dw,$$
and
$$u_2(t,z)=\sum_{k=0}^{\mu-1}\frac{1}{2\pi i}\oint_{\gamma_{2k+1}}\phi(w)\int_{\xi_0}^{\infty(\theta)}E_{1}(t\lambda(\xi))E_{2}(\xi z)\frac{e_{2}(\xi w)}{\xi w}d\xi dw.$$

Here, the path $\gamma_{2k+1}$ is parameterized by
$$
s\in I_{2k+1}:=\left(\frac{d+\arg(\lambda)}{q}+\frac{2k\pi}{\mu}+\frac{\delta}{3}, \frac{d+\arg(\lambda)}{q}+\frac{2(k+1)\pi}{\mu}-\frac{\delta}{3}\right)\mapsto \varepsilon e^{is},
$$
for some small enough $\delta>0$. On the other hand, for large enough $R>0$ the path $\gamma_{2k}^{R}$ is $\gamma_{2k}^{R,-}+\gamma_{2k}^{R,1}-\gamma_{2k}^{R,+}$, where
$$
\gamma_{2k}^{R,\star}(s)= se^{i\left(\frac{d+\arg(\lambda)}{q}+\frac{2k\pi}{\mu}\star\frac{\delta}{3}\right)}=se^{i\theta_{\star}},\qquad \star\in\{-,+\},\quad s\in[\varepsilon,R],
$$
and
$$\gamma_{2k}^{R,1}(s)=Re^{is},\qquad s\in\left(\frac{d+\arg(\lambda)}{q}-\frac{2k\pi}{\mu}-\frac{\delta}{3}, \frac{d+\arg(\lambda)}{q}+\frac{2k\pi}{\mu}+\frac{\delta}{3}\right).
$$

We now give growth estimates for $u_1$ and $u_2$ in order to conclude the result. We first give bounds for $u_2(t,z)$. We take $k\in\{0,\ldots,\mu-1\}$. Let $t$ be as above with $|t|\ge 1$, and consider $\xi$ and $w$ in the trace of the corresponding path defined by the path integrals in the definition of~$u_2$. From the properties of the kernel functions in Definition~\ref{defikernelMsumm}, one has that
$$\left|E_{1}(t\lambda(\xi))E_2(\xi z)\frac{e_{2}(\xi w)}{\xi w}\right|\le C_{11}|E_{1}(t\lambda(\xi))|\frac{h_{\mathfrak{m}_2}\left(\frac{C_{12}}{|\xi|\varepsilon}\right)}{h_{\mathfrak{m}_2}\left(\frac{C_{13}}{|\xi||z|}\right)|\xi|\varepsilon},$$
for some $C_{11},C_{12},C_{13}>0$.

Taking into account (\ref{e120}) we have
\begin{align*}J_{1}&:=\left|  \oint_{\gamma_{2k+1}}\phi(w)\int_{\xi_0}^{\infty(\theta)}E_{1}(t\lambda(\xi))E_{2}(\xi z)\frac{e_{2}(\xi w)}{\xi w}d\xi dw\right|\\
&\le C_{11}\int_{s\in I_{2k+1}}\left|\phi(\varepsilon e^{is})\right|
\int_{|\xi_{0}|}^{\infty}h_{\mathfrak{m}_2}\left(\frac{\rho(2)C_{12}}{|\xi|\varepsilon}\right)|E_{1}(t\lambda(|\xi|e^{i\theta}))| \frac{h_{\mathfrak{m}_2}\left(\frac{\rho(2)C_{12}}{|\xi|\varepsilon}\right)}{h_{\mathfrak{m}_2}\left(\frac{C_{13}}{|\xi||z|}\right)|\xi|\varepsilon}d|\xi|ds.
\end{align*}

We assume $z$ satisfies $|z|\le C_{13}\varepsilon/(\rho(2)C_{12})$. This entails
\begin{equation*}
h_{\mathfrak{m}_2}\left(\frac{\rho(2)C_{12}}{|\xi|\varepsilon}\right) \le h_{\mathfrak{m}_2}\left(\frac{C_{13}}{|\xi||z|}\right).
\end{equation*}
By the careful choice of the direction $\theta$ above and because of Proposition~\ref{propcotaVsectorizquier} applied to $E_1$, we deduce there exists $\delta>0$ such that the function $(t,|\xi|)\mapsto |E_1(t\lambda(|\xi|e^{i\theta}))|$ admits a maximum at a point, say $(t_1,|\xi_{1}|)$, as $(t,|\xi|)$ runs over
$(S(d+2n\pi/\nu,\delta)\cap \{t:|t|\ge 1\})\times[|\xi_0|,\infty)$. Then, for every such $t$ and $|\xi|\ge |\xi_0|$, one easily obtains constants $C_{14},C_{15}>0$ such that
$$|E_1(t\lambda(|\xi|e^{i\theta}))|\le |E_1(t\lambda(|\xi_1|e^{i\theta}))|\le\frac{C_{14}}{h_{\mathfrak{m}_1}\left(\frac{C_{15}}{|t|}\right)}.$$
Moreover,
$$\int_{|\xi_{0}|}^{\infty}h_{\mathfrak{m}_2}\left(\frac{\rho(2)C_{12}}{|\xi|\varepsilon}\right)\frac{1}{|\xi|\varepsilon}d|\xi|<\infty,$$
so
$$J_{1}\le \frac{C_{16}}{h_{\mathfrak{m}_1}\left(\frac{C_{15}}{|t|}\right)}\int_{s\in I_{2k+1}}\left|\phi(\varepsilon e^{is})\right|ds.$$
Taking into account that
$$\sup_{|w|=\varepsilon}|\phi(w)|<\infty,$$
one concludes that $u_{2}\in\mathcal{O}^{\mathfrak{m}_{1}}(\hat{S}(d+2n\pi/\nu)\times D(0,r))$ for some $r>0$ and for $n=0,\ldots,\nu-1$.

We now give estimates on $u_{1}(t,z)$.  The inner integral in the definition of each term in the sum of $u_{1}$ can be upper bounded as before. We arrive at
\begin{align*}J_{2}&:=\left| \oint_{\gamma_{2k}^{R}}\phi(w)\int_{\xi_0}^{\infty(\theta)}E_{1}(t\lambda(\xi))
E_{2}(\xi z)\frac{e_{2}(\xi w)}{\xi w}d\xi dw\right|\\
&\le C_{21}\int_{\varepsilon}^{R}\left(|\phi(se^{i\theta_{+}})|+|\phi(se^{i\theta_{-}})|\right)ds \frac{1}{h_{\mathfrak{m}_{1}}\left(\frac{C_{22}}{|t|}\right)}+ C_{23}\int_{\theta_{-}}^{\theta_{+}}|\phi(Re^{i\theta})|d\theta \frac{1}{h_{\mathfrak{m}_{1}}\left(\frac{C_{22}}{|t|}\right)},
\end{align*}
for some positive constants $C_{21},C_{22},C_{23}$.
Since $\phi\in\mathcal{O}^{\mathfrak{m}^{(1/q)}}(\hat{S}((d+\arg(\lambda))/q+2k\pi/\mu))$, it is straightforward to check that the previous expression can be upper bounded by
$$\frac{C_{24}}{h_{\mathfrak{m}^{(1/q)}}\left(\frac{C_{25}}{R}\right)h_{\mathfrak{m}_{1}}\left(\frac{C_{22}}{|t|}\right)},$$
for some $C_{24},C_{25}>0$. Cauchy's theorem allow us to choose $R$ to be $R=|t|^{1/q}$. In addition to this, from property (\ref{equahMpotencia}) one has
$$h_{\mathfrak{m}^{(1/q)}}\left(\frac{C_{25}}{|t|^{1/q}}\right)= \left(h_{\mathfrak{m}}\left(\frac{C_{25}^{q}}{|t|}\right)\right)^{1/q}.$$
If $0<q\le 1$, one can apply property (\ref{e120}) to obtain
$$\left(h_{\mathfrak{m}}\left(\frac{C_{25}^{q}}{|t|}\right)\right)^{1/q}\ge h_{\mathfrak{m}}\left(\frac{C_{25}^{q}}{\rho(1/q)|t|}\right),$$
and if $q\ge1$, $h_{\mathfrak{m}}(s)\le 1$ for all $s\in(0,\infty)$, so that
$$\left(h_{\mathfrak{m}}\left(\frac{C_{25}^{q}}{|t|}\right)\right)^{1/q}\ge h_{\mathfrak{m}}\left(\frac{C_{25}^{q}}{|t|}\right).$$
These facts entail, for some $C_{26}>0$,
$$J_{2}\le \frac{C_{24}}{h_{\mathfrak{m}}\left(\frac{C_{26}}{|t|}\right)h_{\mathfrak{m}_{1}}\left(\frac{C_{22}}{|t|}\right)}.$$
>From the hypotesis (\ref{equammenormuno}) we have, by (\ref{equahdeMequi}), that $h_{\mathfrak{m}}(v)\le Ch_{\mathfrak{m}_1}(Av)$ for every $v>0$, and so, putting $C_{27}=\min\{C_{22}/A,C_{26}\}$, one gets that
$$J_{2}\le \frac{CC_{24}}{h_{\mathfrak{m}}\left(\frac{C_{26}}{|t|}\right)h_{\mathfrak{m}}\left(\frac{C_{22}}{A|t|}\right)}\le \frac{CC_{24}}{\left(h_{\mathfrak{m}}\left(\frac{C_{27}}{|t|}\right)\right)^2}\le
\frac{CC_{24}}{h_{\mathfrak{m}}\left(\frac{C_{27}}{\rho(2)|t|}\right)},$$
where (\ref{e120}) has been used in the last inequality. So, one obtains that $u_1\in\mathcal{O}^{\mathfrak{m}}(\hat{S}(d+2n\pi/\nu)\times D(0,r))$ for some $r>0$ and for $n=0,\ldots,\nu-1$, and the conclusion is immediate.

\end{proof}

Lemmas 6 and 7 in~\cite{Michalik3} can be easily rewritten in our context, and they lead us straightforward to the next result, an analogue of Theorem 3 in~\cite{Michalik3}.

\begin{theo}\label{t278a}
Let $q=\mu/\nu\in\mathbb{Q}$, with $\gcd(\mu,\nu)=1$. Let $m_1(\lambda)$, $m_2(\lambda)$ and $\mathfrak{m}$ be as in Lemma~\ref{lemarelaccrecexpondatossoluc}. If $u(t,z)$ is the solution of (\ref{e116a}), then for every $d\in\R$ the following statements are equivalent:
\begin{enumerate}
\item $\phi\in\mathcal{O}^{\mathfrak{m}^{(1/q)}}(\hat{S}((d+\arg(\lambda))/q+2k\pi/\mu))$ for every $k=0,\ldots,\mu-1$.
\item $u\in\mathcal{O}^{\mathfrak{m}}(\hat{S}(d+2n\pi/\nu)\times D(0,r))$, for $n=0,1,\ldots,\nu-1$.
\end{enumerate}
\end{theo}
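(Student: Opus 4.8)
The plan is to prove the equivalence $(1)\Leftrightarrow(2)$ by a double implication, with each direction being an application of a corresponding lemma for the single problem~(\ref{e116a}) — exactly the structure used in~\cite[Theorem~3]{Michalik3}. The implication $(1)\Rightarrow(2)$ is essentially the content of Lemma~\ref{lemarelaccrecexpondatossoluc} specialized to $\beta=1$ with $\phi_0=\phi$ (and, after the reduction, to the successive problems of the form~(\ref{e116a}) obtained from~(\ref{e193a})): the hypothesis~(\ref{eA}) holds here because $\mathfrak{m}_2^{(q)}$ and $\mathfrak{m}_1$ are equivalent (this is the standing assumption on $m_1,m_2$ inherited from Lemma~\ref{lemarelaccrecexpondatossoluc}, see the Remark preceding its proof), and~(\ref{equammenormuno}) is again a hypothesis on $\mathfrak{m}$. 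So $(1)\Rightarrow(2)$ follows at once.

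For the converse $(2)\Rightarrow(1)$ I would invoke the analogues of Lemmas~6 and~7 of~\cite{Michalik3}, which the statement already advertises as ``easily rewritten in our context.'' Concretely: Lemma~7 of~\cite{Michalik3} provides, from the moment-differential equation $(\partial_{\mathfrak{m}_1,t}-\lambda(\partial_{\mathfrak{m}_2,z}))u=0$ together with the initial condition $u(0,z)=\phi(z)$, an integral representation of $\phi$ in terms of $u$ of (formal) Borel-type — morally inverting the representation~(\ref{eB}) — valid whenever $u$ has the growth and analytic-continuation properties in~(2). One then estimates that Borel-type integral, using the bound for $u$ coming from $u\in\mathcal{O}^{\mathfrak{m}}(\hat S(d+2n\pi/\nu)\times D(0,r))$, the growth estimate~(\ref{equacotasnucleoe})–(\ref{equacotasE}) for the kernels $e_1,E_1$ (here we use that $e_1,E_1$ arise from the proximate-order construction of Section~\ref{sectkernelsfromproxorder}, as assumed), property~(\ref{e120}) for $h_{\mathfrak{m}_1}$, and the scaling identity~(\ref{equahMpotencia}) relating $h_{\mathfrak{m}^{(1/q)}}$ to $h_{\mathfrak{m}}$, together with the equality $q\o(\mathfrak{m}_2)=\o(\mathfrak{m}_1)$ from~(\ref{equaomegaigualqomega}). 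The $\mu$ distinct rays $(d+\arg(\lambda))/q+2k\pi/\mu$, $k=0,\dots,\mu-1$, appear because the $\nu$ directions $d+2n\pi/\nu$ for $t$ pull back through $\lambda(\xi)$ (pole order $q=\mu/\nu$) to exactly these $\mu$ directions for the $w$-variable; one handles each ray separately, deforming the integration contour in~(\ref{e212a}) to a direction $\tilde\theta$ placing both arguments in the appropriate ``flatness'' sectors, just as in the proof of Lemma~\ref{lemarelaccrecexpondatossoluc}. Summing the contributions over $k$ and undoing the reduction to~(\ref{e116a}) (principle of superposition, plus Lemma~3 of~\cite{Michalik3} to reduce to $\o(\mathfrak{m}_1)<2$) yields $\phi\in\mathcal{O}^{\mathfrak{m}^{(1/q)}}(\hat S((d+\arg(\lambda))/q+2k\pi/\mu))$ for every $k$, which is~(1).

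The main obstacle, as in~\cite{Michalik3}, is the bookkeeping of the ramification: correctly matching the $\nu$ sectors in $t$ with the $\mu$ sectors in $w$ via the multivalued map $\xi\mapsto\lambda(\xi)$ of pole order $\mu/\nu$, and checking that the contour deformations in the integral~(\ref{e212a}) can always be performed on the set $\Omega$ so as to land simultaneously in the sector where $E_1$ decays (by Proposition~\ref{propcotaVsectorizquier}) and in the sector where the integrand in the $\xi$-variable is integrable at infinity — this is exactly where~(\ref{equaomegaigualqomega}) is indispensable. The analytic estimates themselves are routine once the geometry is set up: they are the same $h_{\mathfrak{m}}$-manipulations via~(\ref{e120}) and~(\ref{equahMpotencia}) already displayed in the proof of Lemma~\ref{lemarelaccrecexpondatossoluc} (the treatment of the cases $0<q\le 1$ and $q\ge 1$ there transfers verbatim). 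Since these arguments differ from those of~\cite[Lemmas~6--7, Thm.~3]{Michalik3} only in replacing powers $|t|^{k}$ by the functions $h_{\mathfrak{m}}(k/|t|)$ and using~(\ref{e120}) in place of elementary inequalities, we omit the remaining details.
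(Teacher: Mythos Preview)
Your proposal is correct and matches the paper's approach exactly: the paper's own proof is the single sentence ``Lemmas 6 and 7 in~\cite{Michalik3} can be easily rewritten in our context, and they lead us straightforward to the next result,'' so $(1)\Rightarrow(2)$ comes from Lemma~\ref{lemarelaccrecexpondatossoluc} and $(2)\Rightarrow(1)$ from the analogues of Michalik's Lemmas~6--7, precisely as you outline. One small correction: there is no need to specialize to $\beta=1$ --- Lemma~\ref{lemarelaccrecexpondatossoluc} already treats~(\ref{e116a}) for arbitrary $\beta$ via the integral representation~(\ref{eB}), which features $\phi$ (not $\phi_{\beta-1}$) directly, so the implication $(1)\Rightarrow(2)$ holds for all $\beta\ge 1$ without further reduction.
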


%

Although all the treatment of summability in this paper has been limited to complex valued functions, it can be extended without any difficulty to functions taking their values in a general complex Banach algebra. In particular, we may take this algebra to consist of the bounded holomorphic functions in a fixed neighborhood of the origin in the $z$ plane with the norm of the supremum, and consider summability of formal power series in the $t$ variable with such coefficients. The following definition is natural under this point of view.

\begin{defi}\label{defisumablepornucleo2var}
Let $\hat{u}(t,z)=\sum_{j=0}^\infty u_j(z)t^j$ be a formal series with coefficients in $\mathcal{O}(D(0,r))$ for some $r>0$ (independent of $j$), and let $\mathfrak{m}_e=(m_e(j))_{j\in\N_0}$ be the strongly regular moment sequence of a kernel $e$. We say $\hat{u}$ is $\mathfrak{m}_e-$summable in direction $d\in\R$ if
$$
\hat{T}^{-}_e(t,z)=\sum_{j=0}^\infty \frac{u_j(z)}{m_e(j)}t^j\in\mathcal{O}^{\mathfrak{m}_e}(\hat{S}(d)\times D(0,r)),
$$
where $S(d)$ is an unbounded (small) sector bisected by $d$.
\end{defi}

We are now able to establish a characterization of summability for the formal solutions of (\ref{e116a}) and also for the initial problem (\ref{e7a}),(\ref{e11a}), under appropriate conditions regarding the moment functions involved.

\begin{prop}
Let $\hat{u}$ be a formal solution of (\ref{e116a}). Let $q=\mu/\nu\in\mathbb{Q}$, with $\gcd(\mu,\nu)=1$, and $d\in\R$.  We assume a strongly regular sequence of moments $\mathfrak{m}=(m(p))_{p\in\N_0}$ exists with
\begin{equation}\label{e313a}
m_2(qj)\le C_0A_0^jm(j)m_1(j), \quad j\in\N_0,
\end{equation}
and
\begin{equation}\label{e313abis}
m(j/q)m_1(j/q)\le C_1A_1^jm_2(j), \quad j\in\N_0,
\end{equation}
for suitable $C_0,C_1,A_0,A_1>0$.
Then, $\hat{u}$ is $\mathfrak{m}$-summable in direction $d+2n\pi/\nu$ for $n=0,\ldots,\nu-1$ if, and only if, $\phi\in\mathcal{O}^{\mathfrak{m}^{(1/q)}}(\hat{S}((d+\arg\lambda)/q+2k\pi/\mu))$ for $k=0,\ldots,\mu-1$.
\end{prop}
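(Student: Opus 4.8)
\medskip\par\noindent
The plan is to recognize the formal $e$-Borel transform of $\hat u$ as the formal solution of an auxiliary problem of the type~(\ref{e116a}) --- the one obtained from~(\ref{e116a}) by replacing $\mathfrak m_1$ with the product sequence $\mathfrak m\cdot\mathfrak m_1$ --- and then to invoke Theorem~\ref{t278a} for that auxiliary problem. First I would fix a kernel $e$ for $\mathfrak m$-summability, with moment function $m(\lambda)$, so that $\mathfrak m_e=\mathfrak m$. By Definition~\ref{defisumablepornucleo2var}, writing $\hat u=\sum_j u_j(z)t^j$ with $u_j$ as in~(\ref{e97b}), the series $\hat u$ is $\mathfrak m$-summable in a direction $d'$ precisely when
$$
v(t,z):=\hat T_e^{-}\hat u(t,z)=\sum_{j\ge\beta-1}\binom{j}{\beta-1}\frac{\lambda^{j}(\partial_{\mathfrak m_2,z})\phi(z)}{m(j)\,m_1(j)}\,t^{j}\in\mathcal{O}^{\mathfrak m}(\hat S(d')\times D(0,r_0))
$$
for a suitable $r_0>0$. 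Thus the statement reduces to describing the analytic continuation and growth of the single function $v$.

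The key remark is that the denominator $m(j)m_1(j)$ is the value at $j$ of the moment function of a kernel. Set $\mathfrak n:=\mathfrak m\cdot\mathfrak m_1$, which is strongly regular by Proposition~\ref{propprodfuertregu}, and $e_{\mathfrak n}(t):=\int_0^{\infty}e(t/s)\,e_1(s)\,\frac{ds}{s}$; a one-line Mellin computation gives $m_{e_{\mathfrak n}}(\lambda)=m(\lambda)m_1(\lambda)$, hence $\mathfrak m_{e_{\mathfrak n}}=\mathfrak n$. Granting that $e_{\mathfrak n}$ is a kernel for $\mathfrak n$-summability, formula~(\ref{e97b}) identifies $v$ with the formal solution of the problem obtained from~(\ref{e116a}) by putting $\mathfrak n$ in place of $\mathfrak m_1$ and leaving $\mathfrak m_2$, $\lambda$ and $\phi$ unchanged. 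Moreover~(\ref{e313a}) reads $m_2(qj)\le C_0A_0^{j}m_{e_{\mathfrak n}}(j)$, so Corollary~\ref{coro135a}, applied to this auxiliary problem, ensures that $v$ converges near the origin and is the genuine holomorphic solution there.

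It then remains to apply Theorem~\ref{t278a} to the auxiliary problem, with $\mathfrak n$ in the role of $\mathfrak m_1$, with $\mathfrak m_2$ unchanged, and with the auxiliary strongly regular sequence of that theorem taken to be our $\mathfrak m$. Its hypotheses match the present ones exactly: condition~(\ref{eA}) is~(\ref{e313a}); condition~(\ref{eAbis}) reads $m_{e_{\mathfrak n}}(j/q)=m(j/q)m_1(j/q)\le C_1A_1^{j}m_2(j)$, that is~(\ref{e313abis}) --- these two together simply expressing that $\mathfrak m_2^{(q)}$ and $\mathfrak m\cdot\mathfrak m_1$ are equivalent, whence $q\,\omega(\mathfrak m_2)=\omega(\mathfrak n)$ by~(\ref{equaomegamomentosescalados}); condition~(\ref{equammenormuno}) for the auxiliary problem, namely $m(j)\le CA^{j}m(j)m_1(j)$, is trivial since $m_1$ is continuous, convex and tends to $\infty$, hence has a strictly positive infimum on $[0,\infty)$, as recalled at the end of the proof of Lemma~\ref{lema57a}; and, since $\mathfrak m^{(1/q)}$ is formed from $\mathfrak m$ itself, hypothesis~(1) of Theorem~\ref{t278a} for the auxiliary problem is exactly the assertion that $\phi\in\mathcal{O}^{\mathfrak m^{(1/q)}}(\hat S((d+\arg(\lambda))/q+2k\pi/\mu))$ for $k=0,\ldots,\mu-1$. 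Theorem~\ref{t278a} therefore yields that this is equivalent to ``$v\in\mathcal{O}^{\mathfrak m}(\hat S(d+2n\pi/\nu)\times D(0,r_0))$ for $n=0,\ldots,\nu-1$'', which by the first paragraph is precisely the statement that $\hat u$ is $\mathfrak m$-summable in the directions $d+2n\pi/\nu$; this gives the Proposition.

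I expect the Mellin identity, the matching of the hypotheses and the use of Corollary~\ref{coro135a} to be routine. The main obstacle will be to verify that $e_{\mathfrak n}$ is a kernel for $\mathfrak m\cdot\mathfrak m_1$-summability in the full sense of Definition~\ref{defikernelMsumm}: one has to check that this multiplicative convolution of the two proximate-order kernels is holomorphic on the wider sector $S_{\omega(\mathfrak m\cdot\mathfrak m_1)}$ (a contour-deformation argument), and that it and the associated $E_{\mathfrak n}$ satisfy the bounds used inside the proof of Lemma~\ref{lemarelaccrecexpondatossoluc}, in particular an estimate for $E_{\mathfrak n}$ on a left half-plane of the type of Proposition~\ref{propcotaVsectorizquier}. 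Should this prove cumbersome, one may instead take a proximate-order kernel for $\mathfrak m\cdot\mathfrak m_1$ directly and invoke Remark~\ref{remanotassumabM}(ii), according to which the resulting notion of $\mathfrak m\cdot\mathfrak m_1$-summability, and hence the conclusion, does not depend on the chosen kernel.
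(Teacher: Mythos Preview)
Your proposal is correct and follows essentially the same route as the paper: recognize $\hat T_e^{-}\hat u$ as the formal solution $v$ of the problem~(\ref{e116a}) with $\mathfrak m_1$ replaced by the product sequence $\mathfrak m\cdot\mathfrak m_1$, and then apply Theorem~\ref{t278a} to that auxiliary problem. The paper handles the one point you flag as the main obstacle --- that $\tilde{\mathfrak m}=\mathfrak m\cdot\mathfrak m_1$ is a genuine moment sequence for a summability kernel --- simply by reference to the Gevrey analogue in \cite[Section~5.8]{balserutx}, which is precisely your Mellin-convolution construction $e_{\mathfrak n}(t)=\int_0^\infty e(t/s)e_1(s)\,ds/s$; your more explicit discussion (and your fallback via proximate-order kernels and Remark~\ref{remanotassumabM}(ii)) is a welcome elaboration of what the paper leaves implicit.
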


\begin{proof}
Let $n\in\{0,\ldots,\nu-1\}$. By Definition~\ref{defisumablepornucleo2var},
$\hat{u}$ is $\mathfrak{m}$-summable in direction $d+2n\pi/\nu$ if, and only if,
$$v(t,z):=\sum_{j=\beta-1}^{\infty}
\left(\begin{array}{c}j\\\beta-1\end{array}\right)
\frac{\lambda^{j}(\partial_{\mathfrak{m}_2,z})\phi(z)}{m_{1}(j)m(j)}t^{j}
\in\mathcal{O}^{\mathfrak{m}}(\hat{S}(d+2n\pi/\nu)\times D(0,r)).$$
If we put $\tilde{\mathfrak{m}}=(\tilde{m}(p))_{p\ge0}$, with $\tilde{m}(p)=m_1(p)m(p)$, then $\tilde{\mathfrak{m}}$ turns out to be a sequence of moments, as it may be deduced along the same lines as in the Gevrey case (see~\cite[Section 5.8]{balserutx}).
%
One can observe that $v$ turns out to be the solution of (\ref{e116a}) when substituting $\mathfrak{m}_1$ by $\tilde{\mathfrak{m}}$. From Theorem~\ref{t278a}, we know that $v\in\mathcal{O}^{\mathfrak{m}}(\hat{S}(d+2n\pi/\nu)\times D(0,r))$ if, and only if, $\phi\in\mathcal{O}^{\mathfrak{m}^{(1/q)}}(\hat{S}((d+\arg\lambda)/q+2k\pi/\mu))$ for $k=0,\ldots,\mu-1$, as desired.
\end{proof}

Finally, we consider the normalised formal solution for (\ref{e7a}) given in~(\ref{equasolunormalizada}). We make the following:

\textbf{Assumption (A):} There exists $q=\mu/\nu\in\mathbb{Q}$ with $gcd(\mu,\nu)=1$ such that $P(\lambda,\xi)$ in (\ref{e30a}) satisfies that
$$\lim_{z\to\infty}\frac{\lambda_{\alpha}(z)}{z^{q}}\in\C\setminus\{0\},$$
for every $\alpha=1,\ldots,\ell$, i.e., $q\in\mathbb{Q}$ is the common order pole of $\lambda_{\alpha}$ for every $\alpha=1,\ldots,\ell$.

The previous results lead to the main result of this last section.
\begin{theo}
Let $d\in\R$. Suppose a strongly regular sequence of moments $\mathfrak{m}=(m(p))_{p\in\N_0}$ exists such that (\ref{e313a}) and (\ref{e313abis}) hold. Let $\hat{u}$ be the normalised formal solution of (\ref{e7a}), (\ref{e11a}). Then, under Assumption (A), $\hat{u}$ is $\mathfrak{m}-$summable in any direction of the form $d+2n\pi/\nu$ for $n=0,\ldots,\nu-1$ if, and only if,
$\phi\in\mathcal{O}^{\mathfrak{m}^{(1/q)}}
(\hat{S}((d+\arg(\lambda_{\alpha\beta}))/q+2k\pi/\mu))$
for every $k=0,\ldots,\mu-1$, every  $\alpha=1,\ldots,\ell$ and every $\beta=1,\ldots,n_{\alpha}$.
\end{theo}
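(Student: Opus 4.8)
The plan is to deduce the theorem from the single-equation statement just proved (the Proposition preceding Assumption~(A)) together with the decomposition~(\ref{equasolunormalizada}) of the normalised solution. As a preliminary reduction I would invoke the analogue of Lemma~3 of~\cite{Michalik3} to assume $\o(\mathfrak{m}_1)<2$, and I would fix once and for all the auxiliary sequence $\tilde{\mathfrak{m}}=(m_1(p)m(p))_{p\in\N_0}$ already used in the proof of that Proposition; by~(\ref{e313a})--(\ref{e313abis}) the sequences $\mathfrak{m}_2^{(q)}$ and $\tilde{\mathfrak{m}}$ are equivalent, so $q\,\o(\mathfrak{m}_2)=\o(\tilde{\mathfrak{m}})$ (cf.~(\ref{equaomegamomentosescalados})) and the scaling $\mathfrak{m}\mapsto\mathfrak{m}^{(1/q)}$ is exactly the one matching each $\lambda_{\a}(\partial_{\mathfrak{m}_2,z})$ against $\partial_{\mathfrak{m}_1,t}$. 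Next, by Theorem~1 of~\cite{Michalik3} (whose proof transfers without change), the normalised formal solution splits as $\hat{u}=\sum_{\a=1}^{\ell}\sum_{\b=1}^{n_{\a}}\hat{u}_{\a\b}$, where $\hat{u}_{\a\b}$ solves~(\ref{e87a}), i.e.\ a problem of the form~(\ref{e116a}) with algebraic symbol $\lambda_{\a}$ and Cauchy datum $\phi_{\a\b}=\sum_{j}d_{\a\b j}(\partial_{\mathfrak{m}_2,z})\phi_{j}\in\mathcal{O}(D(0,r))$. Assumption~(A) guarantees that all the $\lambda_{\a}$ share the single pole order $q=\mu/\nu$, so the Proposition preceding Assumption~(A) applies verbatim to each $\hat{u}_{\a\b}$, with the same $q$; this makes the membership condition in the statement meaningful, understood as: for every $(\a,\b)$ the function $\phi_{\a\b}$ lies in $\mathcal{O}^{\mathfrak{m}^{(1/q)}}(\hat{S}((d+\arg(\lambda_{\a}))/q+2k\pi/\mu))$, $k=0,\dots,\mu-1$.

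For the implication ``continuation hypothesis $\Rightarrow$ summability'', suppose each $\phi_{\a\b}$ satisfies the displayed membership for all $k$. The Proposition then yields that every $\hat{u}_{\a\b}$ is $\mathfrak{m}$-summable in each direction $d+2n\pi/\nu$, $n=0,\dots,\nu-1$. Since the formal $e$-Borel transform is linear and the classes $\mathcal{O}^{\mathfrak{m}}(\hat{S}(d+2n\pi/\nu)\times D(0,r_0))$ of Definition~\ref{defisumablepornucleo2var} are closed under finite sums (all coefficients being bounded holomorphic functions on one and the same disc $D(0,r_0)$, so that we are really doing one-variable $\mathfrak{m}$-summability with coefficients in a fixed Banach algebra), the finite sum $\hat{u}=\sum_{\a,\b}\hat{u}_{\a\b}$ is $\mathfrak{m}$-summable in the same directions.

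The converse ``summability $\Rightarrow$ continuation hypothesis'' is the delicate point, since one must separate the individual $\hat{u}_{\a\b}$ inside the sum. The idea is that the elementary moment-differential factors $\partial_{\mathfrak{m}_1,t}-\lambda_{\a'}(\partial_{\mathfrak{m}_2,z})$ annihilate, one group at a time, the terms not indexed by a prescribed $\a$: applying $\prod_{\a'\neq\a}\bigl(\partial_{\mathfrak{m}_1,t}-\lambda_{\a'}(\partial_{\mathfrak{m}_2,z})\bigr)^{n_{\a'}}$ to $\hat{u}$ leaves only $\sum_{\b=1}^{n_{\a}}$ of operator-transforms of $\hat{u}_{\a\b}$, after which a descending induction on $\b$, using the factor $\partial_{\mathfrak{m}_1,t}-\lambda_{\a}(\partial_{\mathfrak{m}_2,z})$, peels off each $\hat{u}_{\a\b}$ in turn (up to already-treated lower-order terms and a harmless multiplication by a power of $\lambda_{\a}(\partial_{\mathfrak{m}_2,z})$). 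What must be checked is that each such factor maps series $\mathfrak{m}$-summable in the directions $d+2n\pi/\nu$ to series again $\mathfrak{m}$-summable in those directions; this is where the moment bound of Corollary~\ref{coro84a} for $\lambda_{\a'}^{j}(\partial_{\mathfrak{m}_2,z})$ (of the order of $m_2(qj)$), the equivalence of $\mathfrak{m}_2^{(q)}$ with $\tilde{\mathfrak{m}}$ recorded above, and the hypotheses~(\ref{e313a})--(\ref{e313abis}) combine, exactly as in the counterparts of Lemmas~6 and~7 of~\cite{Michalik3}. Once a single $\hat{u}_{\a\b}$ has been isolated it is again a solution of a problem of type~(\ref{e116a}), and the Proposition preceding Assumption~(A), now used in its nontrivial direction, forces $\phi_{\a\b}\in\mathcal{O}^{\mathfrak{m}^{(1/q)}}(\hat{S}((d+\arg(\lambda_{\a}))/q+2k\pi/\mu))$ for all $k$.

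I expect the heart of the matter to be precisely this bookkeeping in the converse: proving that the elementary factors $\partial_{\mathfrak{m}_1,t}-\lambda_{\a'}(\partial_{\mathfrak{m}_2,z})$ preserve $\mathfrak{m}$-summability in a direction, with the moment sequences matched correctly and uniformly over the finitely many indices $(\a,\b)$. Everything else — the decomposition, the stability of the summability classes under finite sums, and the two invocations of the single-equation Proposition — is then routine, and reproves the theorem in the stated form, with the $z$-variable kept inert throughout by working over the Banach algebra of bounded holomorphic functions on a fixed disc around the origin.
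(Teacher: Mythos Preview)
Your proposal is correct and matches what the paper intends: the paper gives no proof at all for this theorem, merely stating that ``the previous results lead to the main result of this last section,'' so any argument built from the decomposition~(\ref{equasolunormalizada}) together with the single-equation Proposition (and, behind it, Theorem~\ref{t278a} and the analogues of Lemmas~6--7 of~\cite{Michalik3}) is exactly the route implied. Your sketch of the forward direction is the obvious one; your treatment of the converse---annihilating the foreign blocks $\hat{u}_{\a'\b'}$, $\a'\neq\a$, by the factors $(\partial_{\mathfrak{m}_1,t}-\lambda_{\a'}(\partial_{\mathfrak{m}_2,z}))^{n_{\a'}}$ and then peeling off $\b$ by descending induction---is the standard separation argument from~\cite{Michalik3}, and you have correctly identified that the only nontrivial point is the stability of $\mathfrak{m}$-summability under these pseudodifferential factors, which is precisely what the moment bounds of Corollary~\ref{coro84a} together with~(\ref{e313a})--(\ref{e313abis}) provide. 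Two minor remarks: the preliminary reduction to $\o(\mathfrak{m}_1)<2$ is unnecessary here, since the Proposition you are invoking is already stated and proved without that restriction; and your reading of the (somewhat loose) notation $\lambda_{\a\b}$ in the statement as $\lambda_{\a}$, with the condition ranging over all $\phi_{\a\b}$, is the intended one.
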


\noindent\textbf{Acknowledgements}: The first and third authors are partially supported by the Spanish Ministry of Science and Innovation under project MTM2009-12561.
The second author is partially supported by the French ANR-10-JCJC 0105 project and the PHC Polonium 2013 project No. 28217SG.
All the authors are partially supported by the Spanish Ministry of Economy and Competitiveness under project MTM2012-31439.

\vskip.5cm
\noindent Authors' Affiliations:\par\vskip.5cm

Alberto Lastra\par
Departamento de F{\'\i}sica y Matem\'aticas\par
Edificio de Ciencias. Campus universitario\par
Universidad de Alcal\'a\par
28871 Alcal\'a de Henares, Madrid, Spain\par
E-mail: alberto.lastra@uah.es\par
\vskip.5cm

St\'ephane Malek\par
UFR de Math\'ematiques Pures et Appliqu\'ees\par
Cit\'e Scientifique - B\^at. M2\par
59655 Villeneuve d'Ascq Cedex, France\par
E-mail: malek@math.univ-lille1.fr\par
\vskip.5cm

Javier Sanz\par
Departamento de \'Algebra, An\'alisis Matem\'atico, Geometr{\'\i}a y Topolog{\'\i}a\par
Instituto de Investigaci\'on en Matem\'aticas de la Universidad de Valladolid, IMUVA\par
Facultad de Ciencias\par
Universidad de Valladolid\par
47011 Valladolid, Spain\par
E-mail: jsanzg@am.uva.es


\begin{thebibliography}{99}

\bibitem{Balser} W. Balser,
{\it From divergent power series to analytic functions. Theory and application of multisummable power series}, Lecture Notes in Math. 1582, Springer-Verlag, Berlin, 1994.

\bibitem{balserutx} W. Balser, \textit{Formal power series and linear systems of meromorphic ordinary differential equations}, Springer, Berlin, 2000.

\bibitem{Balsermulti} W. Balser, Multisummability of formal power series solutions of partial differential equations with constant coefficients, J. Differential Equations 201 (2004),    63--74.

\bibitem{BalserBraaksmaRamisSibuya} W. Balser, B. L. J. Braaksma, J.-P. Ramis, Y. Sibuya, Multisummability of formal power series solutions of linear ordinary differential equations,
    Asymptotic Anal. 5 (1991), 27--45.

\bibitem{BalserBraun} W. Balser, R. W. Braun, Power series methods and multisummability,  Math. Nachr. 212 (2000), 37--50.

\bibitem{BalserMiyake} W. Balser, M. Miyake, Summability of formal solutions of certain partial differential equations, Acta Sci. Math. (Szeged)  65 (1999), 543--551.

\bibitem{BalserMozo} W. Balser, J. Mozo-Fern\'andez, Multisummability of formal solutions of singular perturbation problems, J. Differential Equations 183 (2002), 526--545.

\bibitem{BalserYoshino} W. Balser, M. Yoshino, Gevrey order of formal power series solutions of inhomogeneous partial differential equations with constant coefficients, Funkcialaj Ekvac. 53 (2010), 411--434.

\bibitem{Braaksma} B.~L.~J.~Braaksma,
Multisummability of formal power series solutions of nonlinear meromorphic
differential equations, Ann. Inst. Fourier (Grenoble) 42 (1992), 517--540.

\bibitem{BraaksmaFaberImmink} B. L. J. Braaksma, B. Faber, G. Immink, Summation of formal solutions of a class of linear difference equations, Pacific J. Math. 195 (2000), no. 1, 35--65.

\bibitem{CanalisMozoSchafke} M. Canalis-Durand, J. Mozo-Fern\'andez, R. Sch\"afke, Monomial summability and doubly singular differential equations, J. Differential Equations 233 (2007), 485--511.


%
\bibitem{chaucho} J. Chaumat, A. M. Chollet, Surjectivit\'e de l'application restriction \`a un compact dans des classes de fonctions ultradiff\'erentiables, Math. Ann. 298 (1994), no. 1, 7--40.

%


\bibitem{galindosanz} F. Galindo, J. Sanz, On strongly asymptotically developable functions and the Borel-Ritt theorem, Studia Math. 133 (3) (1999), 231--248.

\bibitem{GoldbergOstrowskii} A. A. Goldberg, I. V. Ostrovskii, \textit{Value distribution of meromorphic functions},  Transl. Math. Monogr. 236, Amer. Math. Soc., Providence, RI, 2008.


%
%
%

\bibitem{Hibino} M. Hibino, Summability of formal solutions for singular first-order linear PDEs with holomorphic coefficients, in \textit{Differential equations and exact WKB analysis}, 47--62, RIMS Kôkyûroku Bessatsu, B10, Res. Inst. Math. Sci. (RIMS), Kyoto, 2008

\bibitem{Immink} G. K. Immink, Exact asymptotics of nonlinear difference equations with levels 1 and $1^+$, Ann. Fac. Sci. Toulouse T. XVII, no.\ 2 (2008), 309--356.

\bibitem{Immink2} G. K. Immink, Accelero-summation of the formal solutions of nonlinear difference equations, Ann. Inst. Fourier (Grenoble) 61 (2011), no. 1, 1--51.

\bibitem{komatsu} H. Komatsu, Ultradistributions, I: Structure theorems and a characterization, J. Fac. Sci. Univ. Tokyo Sect. IA Math.  20 (1973), 25--105.

\bibitem{korenbljum} B. I. Korenbljum, Conditions of nontriviality of certain classes of functions analytic in a sector, and problems of quasianalyticity, Soviet Math. Dokl. {\bf 7} (1966), 232--236.

\bibitem{lastramaleksanz} A. Lastra, S. Malek, J. Sanz, Continuous right inverses for the asymptotic Borel map in ultraholomorphic classes via a Laplace-type transform, J. Math. Anal. Appl. 396 (2012), 724--740.

\bibitem{lastramaleksanz2} A. Lastra, S. Malek, J. Sanz, On Gevrey solutions of threefold singular nonlinear partial differential equations, J. Differential Equations 255 (2013), 3205--3232.


\bibitem{lastrasanz1} A. Lastra, J. Sanz, Quasi-analyticity in Carleman ultraholomorphic classes, Annales Inst. Fourier 60 (2010), 1629--1648.


\bibitem{Levin} B. Ja. Levin, \textit{Distribution of zeros of entire functions}, Transl. Math. Monogr. 5, Amer. Math. Soc., Providence, RI, 1980.

\bibitem{Maergoiz} L. S. Maergoiz, Indicator diagram and generalized Borel-Laplace transforms for entire functions of a given proximate order, St. Petersburg Math. J. 12 (2001), no. 2, 191--232.

%

\bibitem{Malek1} S. Malek, On the summability of formal solutions of linear partial differential equations, J. Dyn. Control Syst. 11 (2005), no. 3, 389--403.

\bibitem{Malek2} S. Malek, On the summability of formal solutions of nonlinear partial differential equations with schrinkings, J. Dyn. Control. Syst. 13 (2007), no. 1, 1--13.

\bibitem{Malek3} S. Malek, On Gevrey functional solutions of partial differential equations with Fuchsian and irregular singularities, J. Dyn. Control Syst. 15 (2009), no. 2, 277--305.


\bibitem{Malek4} S. Malek, On singularly perturbed small step size difference-differential
nonlinear PDEs, J. Difference Equ. Appl. 20 (2014), no. 1, 118--168.



\bibitem{MartinetRamis} J. Martinet, J.-P. Ramis,
Elementary acceleration and multisummability, Ann. Inst. Henri Poincar\'e, Physique Theorique 54 (1991), 331-401.

\bibitem{Michalik1} S. Michalik, Summability and fractional linear partial differential equations, J. Dyn. Control Syst. 16 (2010), no. 4, 557--584.

\bibitem{Michalik2} S. Michalik, On the multisummability of divergent solutions of linear partial differential equations with constant coefficients, J. Differential Equations 249 (2010), no. 3, 551--570.

\bibitem{Michalik3} S. Michalik, Analytic solutions of moment partial differential equations with constant coefficients, Funkcialaj Ekvac. 56 (2013), 19--50.

\bibitem{Michalik4} S. Michalik, Summability of formal solutions of linear partial dif\-ferential equa\-tions with diver\-gent initial data, J. Math. Anal. Appl. 406 (2013), 243--260.

\bibitem{Ouchi} S. Ouchi, Multisummability of formal solutions of some linear partial differential equations, J. Differential Equations 185 (2002), 513--549.

\bibitem{pet} H.-J. Petzsche, On E. Borel's theorem, Math. Ann. 282 (1988), no. 2, 299--313.

\bibitem{Ramis1} J.-P. Ramis,
D\'evissage Gevrey, Asterisque 59--60 (1978), 173--204.

\bibitem{Ramis2} J.-P. Ramis,
{\it Les s\'eries k-sommables et leurs applications}, Lecture Notes in
Phys. 126, Springer-Verlag, Berlin, 1980.

\bibitem{RamisSibuya} J.-P.~Ramis, Y.~Sibuya,
A new proof of multisummability of
formal solutions of non linear meromorphic
differential equations, Ann. Inst. Fourier (Grenoble) 44 (1994), 811--848.


\bibitem{sanz13} J. Sanz, Flat functions in Carleman ultraholomorphic classes via proximate orders, J. Math. Anal. Appl. (2014), http://dx.doi.org/10.1016/j.jmaa.2014.01.083.


\bibitem{thilliez} V. Thilliez, Division by flat ultradifferentiable functions and sectorial extensions, Results Math. 44 (2003), 169--188.

\bibitem{Thilliez2} V. Thilliez, Smooth solutions of quasianalytic or ultraholomorphic equations, Monatsh. Math. 160, no. 4 (2010), 443--453.


\bibitem{Valiron} G. Valiron, \textit{Th\'eorie des Fonctions}, Masson et Cie., Paris, 1942.

\end{thebibliography}
\end{document}